\documentclass[12pt]{article}
\usepackage[top=2.5cm,bottom=2.5cm,left=2.5cm,right=2.5cm]{geometry}
\usepackage{hyperref}   
\usepackage{mathtools}
\usepackage{amssymb}
\usepackage{amsmath,amsthm}
\usepackage[latin1]{inputenc}
\usepackage[dvips]{graphicx}
\usepackage{hyperref}
\usepackage{color}
\usepackage{xcolor}
\usepackage{cite}
\usepackage{mathrsfs}
\usepackage{enumerate}
\usepackage{tikz}
\usepackage{pgffor} 
\usepackage{amsmath, amssymb, mathrsfs}
\usepackage{pgfplots} 
\usetikzlibrary{calc}

\newcommand{\Mod}[1]{\ (\mathrm{mod}\ #1)}
\hypersetup{colorlinks=true, linkcolor=blue, citecolor=blue,urlcolor=blue}

\newtheorem{remark}{Remark}[section]
\newtheorem{definition}[remark]{Definition}

\newtheorem{lemma}[remark]{Lemma}
\newtheorem{theorem}[remark]{Theorem}
\newtheorem{proposition}[remark]{Proposition}

\newtheorem{corollary}[remark]{Corollary}

\usepackage{tikz} 
\pgfplotsset{compat=1.18} 
\usetikzlibrary{backgrounds,positioning}
\usepackage{subcaption} 
\usepackage{pgfmath}
\usepackage{xifthen}
\usetikzlibrary{positioning}

\DeclareMathOperator{\rad}{r}
\DeclareMathOperator{\n}{n}

\DeclareMathOperator{\D}{D}

\newenvironment{acknowledgement}
{\par\bigskip\noindent\textbf{Acknowledgements}\ }
{\par\bigskip}

\title{The Equidistant Dimension of Corona Product Graphs}

\author{
\begin{tabular}{c}
Sandor E. Tu\~{n}\'{o}n-Andr\'{e}s, Alejandro Estrada-Moreno, Juan A. Rodr\'{i}guez-Vel\'{a}zquez \\[1ex]
{\small Universitat Rovira i Virgili, Departament d'Enginyeria Inform\`atica i Matem\`atiques} \\ 
{\small Av. Pa\"{\i}sos Catalans 26, 43007 Tarragona, Spain} \\
{\small \texttt{sandorernesto.tunon@urv.cat, alejandro.estrada@urv.cat, juanalberto.rodriguez@urv.cat}}
\end{tabular}
}

\date{ }

\begin{document}
\maketitle

\begin{abstract}
A subset $S$ of vertices, in a connected graph $G$, is called a distance-equalizer set if for every pair of distinct vertices outside $S$, there exists a vertex in $S$ equidistant to both. The equidistant dimension, denoted by $\xi(G)$, is defined as the minimum cardinality of such sets. While several distance-based parameters have been studied for different graph products, the equidistant dimension of corona product graphs has remained unexplored. In this paper, we investigate the equidistant dimension of the corona product $G \odot H$ of two graphs $G$ and $H$. We introduce the empty bisector graph $\widehat{G}$, an auxiliary construction that relates pairs of vertices in $G$ that cannot be equidistant from any third vertex. Using this framework, we establish tight bounds on the equidistant dimension of $G \odot H$ and derive exact values for several classical families of graphs. Moreover, we show that for any fixed base graph $G$, the equidistant dimension of $G \odot H$ depends on $H$ only through its order and eventually becomes linear in $\n(H)$.

\end{abstract}

{\it Keywords}:
Corona product graph; Distance-equalizer set; Equidistant dimension; Empty bisector graph, Vertex cover.

\section{Introduction}

Distance-based parameters in graph theory have proven to be useful in the analysis of structural, combinatorial, and algorithmic properties of discrete networks. Among these, the \emph{metric dimension} in graphs, introduced by Slater \cite{Slater1975}, and independently by Harary and Melter \cite{Harary1976}, has received extensive attention due to its applications in robot navigation, network verification, and fault detection \cite{Slater1975,Harary1976,Khuller1996,Beerliova2006}. The metric dimension, encapsulates the minimal amount of information required for unique vertex identification based on distances.

More recently, the focus has expanded toward the dual framework of \emph{anonymization} and \emph{privacy} in networks. As a counterpart to metric dimension, Trujillo-Rasua and Yero~\cite{Trujillo-Rasua2016} introduced the concepts of antiresolving sets and metric antidimension, which seek to obfuscate vertex identities rather than reveal them.

Motivated by this line of research, Gonz{\'a}lez, Hernando, and Mora~\cite{Gonzalez2022} proposed a new and more flexible metric parameter, namely the \emph{equidistant dimension} of a graph, which captures the notion of distance-equalizer sets. A subset $S$ of vertices in a connected graph $G$ is called a distance-equalizer set if for every pair of distinct vertices outside $S$, there exists a vertex in $S$ equidistant to both. The equidistant dimension $\xi(G)$ is defined as the minimum cardinality of such sets. This parameter offers a complementary perspective by emphasizing the existence of vertices that are equidistant from a common vertex, which may be of interest in contexts related to symmetry and structural analysis of graphs. So far, connections between distance-equalizer sets and doubly resolving sets have provided new insights and applications \cite{Gonzalez2022}.

Since its inception, the equidistant dimension has been studied from structural and computational viewpoints. Early contributions established general bounds on $\xi(G)$ in terms of classical graph invariants, including the order, diameter, independence number, and maximum degree \cite{Gonzalez2022}. Exact values were subsequently determined for several fundamental graph families, such as paths, cycles, and complete multipartite graphs. More recently, the parameter has been computed for additional structured classes of graphs, notably Johnson and Kneser graphs \cite{Kratica2024}, as well as for various families derived from convex polytopes \cite{Savic2024}. From the algorithmic standpoint, in \cite{GispertFernandez2024} proved that determining the equidistant dimension of a graph is NP-complete, thereby revealing the intrinsic computational complexity of the of the problem.

Graph products constitute a versatile and widely studied operation in graph theory, enabling the construction of complex networks from simpler building blocks. Understanding how graph invariants behave under these products is a classical and challenging problem \cite{Imrich2000}. The equidistant dimension has recently been investigated for lexicographic product graphs \cite{GispertFernandez2024}, and Cartesian product graphs \cite{GispertFernandez2025, Kratica2025}, revealing intricate relationships between the equidistant dimensions of the factors and the product.

In particular, the corona product of graphs offers a useful construction that facilitates the analysis of distance-based and domination parameters,  \cite{Yero2011,Haynes1998a,BarikPatiSarma2007}. 
However, the equidistant dimension of the the corona product graphs remains unexplored. This paper addresses this gap by presenting a comprehensive study of the equidistant dimension of corona product graphs. Given a graph $G$, we introduce the \emph{empty bisector graph} $\widehat{G}$, an auxiliary graph defined on the same vertex set as $G$, where two vertices are adjacent if and only if no vertex of $G$ lies at equal distance from them. This construction captures pairs of vertices for which no third vertex is equidistant, providing a compact encoding of constraints related to the equidistant-dimension. Consequently, the empty bisector graph plays a central role in our analysis.

Using this framework, we derive tight bounds on the equidistant dimension of general corona product graphs in terms of structural parameters of $\widehat{G}$ and the order of the graph $H$. Several illustrative examples are provided, which serve both to clarify the obtained results and to confirm the tightness of the bounds established. Moreover, we obtain closed formulas for the equidistant dimension when the base graph $G$ satisfies certain conditions. These results allow us to determine the exact value of $\xi(G \odot H)$ when $G$ belongs to some classical families of graphs, including paths, cycles, complete graphs, and hypercubes. Furthermore, we prove that $\xi(G \odot H)$ depends on the graph $H$ only through its order $\n(H)$. Combined with structural properties of distance-equalizer sets, this leads to the conclusion that, for any fixed base graph $G$, the equidistant dimension of $G \odot H$ is eventually linear in $\n(H)$. Finally, we establish several relationships between parameters of $G$ and $\widehat{G}$, and we resolve the particular case in which $G$ contains a universal vertex.

The paper is organized as follows. In Section~\ref{Sec:Notation}, we introduce the necessary definitions and preliminaries. 
Section~\ref{Sec:MainResults} investigates fundamental properties of distance-equalizer sets in the corona product graphs, as well as the relationship between the equidistant dimension of the corona product graphs and certain parameters of the empty bisector graph, which serve as a critical foundation for the analyses developed in this paper. Based on these relationships, we obtain sharp bounds for the equidistant dimension of corona product graphs.
In the aforementioned section, we also derive closed formulas for $\xi(G \odot H)$ when $G$ satisfies certain conditions, and we provide a characterization of the extremal case $\xi(G \odot H) = \n(G)$. 
Subsection~\ref{Sec:Computation} applies the previous results to compute exact values of $\xi(G \odot H)$ for several families of connected graphs $G$ and for any graph $H$. 
Section~\ref{Sec:linnh} is devoted to proving that the equidistant dimension becomes linear in $\n(H)$ beyond a certain threshold.
Next, in Section~\ref{Sec:MinorResults}, we explore relationships between parameters of a graph $G$ and its empty bisector graph, and we establish a tight upper bound relating $\xi(G \odot H)$ and $\xi(G)$. We further determine the value of $\xi(G \odot H)$ in the case where $\Delta(G) = \n(G)-1$. Finally,  Section~\ref{Sec:Conclusions} is devoted to conclusions and future works.

\section{Definitions, notation and useful properties} \label{Sec:Notation}

For any graph $G$, the vertex set is denoted by $V(G)$, and the edge set is denoted by $E(G)$. The order of a graph $G = \big(V(G), E(G) \big)$  is denoted by $\n(G)$.
The \emph{degree} of a vertex $v$ in $G$, is denoted by $\delta(v)$, and the \emph{open neighbourhood} of $v$ by $N_G(v)$. 
If $\delta(v) = 1$, then we say that $v$ is an \emph{end vertex}, in which case the only vertex adjacent to $v$ is called its \emph{support vertex}. When $\delta(v) = \n(G) - 1$, we say that $v$ is \emph{universal}. 
The \emph{maximum degree} of $G$ is $\Delta(G) = \max \{\delta(v) : v \in V(G)\}$, and its \emph{minimum degree} is $ \delta(G) = \min \{\delta(v) : v \in V(G)\}.$
The \emph{distance} between two vertices $v,w \in V(G)$ is denoted by $d_G(v,w)$, and the \emph{diameter} of $G$ is 
$\D(G) = \max \{ d_G(v,w) : v,w \in V(G)\}$. The \emph{eccentricity} of a vertex $v \in V(G)$ is $\varepsilon(v) = \max\{ d_G(u,v): u \in V(G)\}$ and the \emph{radius} of $G$ is defined as $\rad(G) = \max\{ \varepsilon(v): v \in V(G)\}$.
A \emph{clique} of a graph $G$ is a complete subgraph of $G$, and the \emph{clique number} of $G$, denoted by $\omega(G)$, is the maximum order of a clique of $G$. 

An \emph{independent set} of $G$ is a subset of pairwise non-adjacent vertices, and the \emph{independence number} of $G$, denoted by $\alpha(G)$, is the maximum cardinality among all independent sets of $G$. 
A \emph{vertex cover} of $G$ is a subset $B \subseteq V(G)$ such that for every edge $\{u,v\} \in E(G)$ we have $\{u,v\}\cap B\ne \varnothing $. The minimum cardinality among all \emph{vertex covers} of $G$ is the \emph{vertex cover number} of $G$ and is usually denoted by $\beta(G)$.

For every integer $n \geq 1$, we denote $[n] = \{1,2,\ldots,n\}$. As usual, we denote by $N_n$, $P_n$, $C_n$, $W_n$, $K_n$, and $K_{r,n-r}$ the empty graphs, the path graphs, the cycle graphs, the wheel graphs, the complete graphs, and the complete bipartite graphs of order $n$, respectively.

All graphs considered in this paper are undirected, simple, and finite. 

\begin{definition}[Distance-equalizer set \cite{Gonzalez2022}]
Let $G$ be a connected graph. A subset $ S \subseteq V(G) $ is called a \emph{distance-equalizer set} if for every pair of distinct vertices $u,v \in V(G) \setminus S $, there exists a vertex $w \in S $ such that
\[
d_G(w, u) = d_G(w, v).
\]
The \emph{equidistant dimension} of $ G $, denoted $\xi(G)$, is the minimum cardinality among all distance-equalizer sets in $G$.
\end{definition}

\begin{definition} [Total distance-equalizer set \cite{GispertFernandez2024}] \label{def:toteqdim}
Let $G$ be a connected graph. A subset $ T \subseteq V(G) $ is a \emph{total distance-equalizer set}, if for every two distinct vertices $u,v \in V(G)$, there exists $w \in T$ such that 
\[
d_G(w, u) = d_G(w, v).
\]
The \emph{Total equidistant dimension} of $G$, denoted $\xi_t(G)$, is the minimum cardinality among all total distance-equalizer sets in $G$.
\end{definition}

Roughly speaking, a bisector is a mathematical object that divides a mathematical structure into two equal parts. In particular, in the context of graph theory, we can state the following definition.

\begin{definition}[Bisector of two vertices]
Given any graph $G$. For every pair of distinct vertices $u, v \in V(G)$, we define the\emph{ bisector} of $u$ and $v$ as
$$
B_G(u \mid v) = \{w \in V(G) : d_G(w, u) = d_G(w, v)\}.
$$
\end{definition}

In this paper, we introduce the following auxiliary graph, which plays a central role in our analysis.

\begin{definition}\emph{(Empty bisector graph).} \label{G_hat}
Given any graph $G = (V, E)$, we define its \emph{empty bisector graph} $\widehat{G} = (V, \widehat{E})$, where $ \{u, v\} \in \widehat{E}$ if and only if  $B_G(u \mid v)= \varnothing$.
\end{definition}

For every connected graph $G$ of order $n$, it is feasible to obtain the graph $\widehat{G}$ in polynomial time of order $O(n^3)$,  using the Floyd-Warshall algorithm \cite{Warshall1962}.

\begin{definition}[Cartesian product of graphs \cite{Imrich2000}] \label{CartesianProd}

Given two graphs $ G = (V_1, E_1) $, $ H = (V_2, E_2) $, the \emph{Cartesian product graph} $ G \, \square \, H = (V_1 \times V_2, E) $ is defined so that two vertices $ (g, h) $ and $ (g', h') $ are adjacent if and only if they satisfy any of the following conditions:

\begin{enumerate}
    \item \( g = g' \) and \( h \sim h' \), or
    \item \( g \sim g' \) and \( h = h' \)
\end{enumerate}

\end{definition}

\begin{definition}[Corona product of graphs\cite{Harary1969}]
Let $G$ and $H$ be two graphs.  
The \emph{corona product} of $G$ and $H$, denoted by $G \odot H=(V,E)$, 
is the graph defined by
$$
V = V(G)\;\cup\; \bigcup_{v \in V(G)} V(H_v),
$$
and
$$
E = E(G)\;\cup\; \bigcup_{v \in V(G)} E(H_v)
    \;\cup\; \bigcup_{v \in V(G)} \{\, \{v,h \} : h \in V(H_v) \,\},
$$
where, for each vertex $v \in V(G)$, the symbol $H_v$ denotes a graph isomorphic to $H$, with vertex and edge sets $V(H_v)$ and $E(H_v)$, respectively.
\end{definition}

From now on, we take $V(G) =\{v_1,\ldots,v_{n}\}$ and $H_i = (V_i,E_i)$ as the $i$-th copy of $H$ in $G\odot H$.  Note that for each $i\in [n]$, all the vertices of $H_i$ are adjacent to $v_i$.

Observe that the corona product $G \odot H$ is connected if and only if $G$ is connected. For any two graphs $G$ and $H$, with $G$ connected, the distances between two vertices $u, v$ in the corona product $G \odot H$, can be computed in terms of the distances in $G$ or in $H$, as follows:

\begin{equation} \label{EqDistGoH}
d_{G \odot H}(u,w) = 
\begin{cases} 
\min\{d_H(u,w), 2\} & \text{if } u,w \in V(H_i)
\\ d_G(v_i,v_j)+2 & \text{if } u \in V(H_i) \text{ and } w \in V(H_j), i\neq j
\\ d_G(v_i,w)+1 & \text{if } u \in V(H_i) \text{ and } w \in V(G)
\\ d_G(u,w) & \text{if } u,w \in V(G)

\end{cases} 
\end{equation} 

The remaining definitions and notation used throughout the paper will be introduced in the context in which they are applied.
For undefined terms we refer the reader to \cite{West2001}.

The relationship between independent sets and vertex covers in a graph $G$ will play a central
role in establishing tight bounds for the \emph{equidistant dimension} of corona product graphs. For this reason, we recall the following classical result.

\begin{theorem}[Gallai's Theorem \cite{Gallai1959}]\label{Gallai} For any graph $G$, $$\alpha(G)+\beta(G) = \n(G).$$
\end{theorem}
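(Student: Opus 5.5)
The plan is the standard complementation argument; throughout, $G$ is an arbitrary finite simple graph. Write $\n(G)=n$. The key observation is that for any $S \subseteq V(G)$ we have
\[
S \text{ is independent} \iff V(G)\setminus S \text{ is a vertex cover}.
\]
Indeed, $S$ is independent exactly when no edge has both endpoints in $S$. Equivalently, every edge $\{u,v\}$ has at least one endpoint outside $S$, i.e.\ $\{u,v\}\cap (V(G)\setminus S)\neq\varnothing$, which is precisely the vertex cover condition for the complement.

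With this equivalence I will prove the two inequalities in turn.

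\textbf{Step 1 ($\beta(G)\le n-\alpha(G)$).} Take a maximum independent set $S$, so $|S|=\alpha(G)$. By the equivalence, $V(G)\setminus S$ is a vertex cover. Hence $\beta(G)\le n-\alpha(G)$.

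\textbf{Step 2 ($\alpha(G)\ge n-\beta(G)$).} Take a minimum vertex cover $B$, so $|B|=\beta(G)$. By the equivalence applied to $S=V(G)\setminus B$, the set $V(G)\setminus B$ is independent. Hence $\alpha(G)\ge n-\beta(G)$.

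Combining the two inequalities gives $\alpha(G)+\beta(G)=n$.

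No step is a genuine obstacle. The only points to check are the degenerate cases, namely an edgeless graph, where $\varnothing$ is a vertex cover and $\alpha(G)=n$, and the empty graph on zero vertices. In both, the equivalence holds vacuously, so the argument applies unchanged. This is important because the paper applies the result to $\widehat{G}$, which may well have no edges.
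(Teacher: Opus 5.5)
Your proof is correct and complete: the equivalence ``$S$ is independent if and only if $V(G)\setminus S$ is a vertex cover'' yields both inequalities, and it also holds trivially for edgeless graphs such as $\widehat{G}$ when $\beta(\widehat{G})=0$. The paper gives no proof of its own---it simply cites Gallai's 1959 result---and your complementation argument is the standard proof of this identity, so there is nothing to compare or fix.
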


\section{Main Results} \label{Sec:MainResults}

Note that for every graph $H$ we have $\xi(K_1 \odot H) = 1$, as the only vertex of $K_1$ is universal in $K_1 \odot H$. Therefore, from now on, we shall consider simple and connected graphs $G$ of order at least 2.

\begin{lemma}\label{lemmaLowerBound}

Let $G$ and $H$ be two graphs, with $G$ connected. For every distance-equalizer set $S$ of $G\odot H$ we have 
\[ S\cap(\{v_i\}\cup V(H_i))\neq \varnothing  , \; \text{for all } i\in[\n(G)],\] 
and as a consequence, $\xi(G \odot H) \geq \n(G) $
\end{lemma}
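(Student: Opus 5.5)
The argument is by contradiction using the distance formula \eqref{EqDistGoH}. Suppose $S$ is a distance-equalizer set of $G\odot H$ and, for some $i\in[\n(G)]$, $S\cap(\{v_i\}\cup V(H_i))=\varnothing$. The aim is to exhibit a pair of distinct vertices outside $S$ that no vertex of $S$ equalizes.

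The natural pair is $v_i$ together with a vertex $h\in V(H_i)$. Such an $h$ exists because $H$ has at least one vertex, and both $v_i$ and $h$ lie outside $S$ by assumption. Every $w\in S$ lies outside $\{v_i\}\cup V(H_i)$, so either $w\in V(G)\setminus\{v_i\}$ or $w\in V(H_j)$ with $j\neq i$. We compute both distances in each case.
\begin{itemize}
\item If $w=v_j\in V(G)$ with $j\neq i$, then $d(w,v_i)=d_G(v_j,v_i)$ and $d(w,h)=d_G(v_j,v_i)+1$.
\item If $w\in V(H_j)$ with $j\neq i$, then $d(w,v_i)=d_G(v_j,v_i)+1$ and $d(w,h)=d_G(v_j,v_i)+2$.
\end{itemize}
In both cases the two distances differ by exactly one. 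Hence no $w\in S$ is equidistant from $v_i$ and $h$, which contradicts $S$ being a distance-equalizer set. This forces $S\cap(\{v_i\}\cup V(H_i))\neq\varnothing$ for every $i$.

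The consequence then follows by counting. The sets $\{v_i\}\cup V(H_i)$, for $i\in[\n(G)]$, are pairwise disjoint, so any distance-equalizer set satisfies $|S|\ge \n(G)$. Applying this to a minimum set gives $\xi(G\odot H)\ge\n(G)$.

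The proof has no real obstacle. The only points needing care are that $V(H)\neq\varnothing$, so that $h$ exists, and that $G$ is connected, so that \eqref{EqDistGoH} applies.
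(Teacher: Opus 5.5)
Your proof is correct and is essentially the paper's argument. Both use the pair $v_i$, $h\in V(H_i)$, observe from \eqref{EqDistGoH} that every vertex outside $\{v_i\}\cup V(H_i)$ is at distance one more from $h$ than from $v_i$, and then count using the pairwise disjoint sets $\{v_i\}\cup V(H_i)$. Your version also covers the case $\n(G)=1$ uniformly (there $S$ would be empty), which the paper handles separately.
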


\begin{proof}

If $\n(G)=1$, the result is immediate, as $V(G \odot H)=\{v_1\}\cup V(H_1)$. 
Otherwise, let $S \subseteq V(G\odot H)$ be a distance-equalizer set of $G\odot H$. Since for any $i \in [\n(G)], h \in V(H_i)$ and $u \in V(G \odot H)\setminus(\{v_i\}\cup V(H_i))$ we have $d(u,h) = d(u,v_i)+1$, it follows that $S\cap(\{v_i\}\cup V(H_i))\neq \varnothing $.

\end{proof}

\begin{lemma}\label{includedHi}

Let $G$ and $H$ be two graphs, with $G$ connected. If $S$ is a distance-equalizer set of $G\odot H$, then for any pair of distinct vertices $v_i, v_j \in V(G)$ such that $B_G(v_i \mid v_j) = \varnothing$, the following conditions hold:
\begin{enumerate}[\rm (i)]
\item $v_i \in S$ or $v_j\in S$
\item $V(H_{i})\subseteq S$ or $V(H_{j})\subseteq S$
\end{enumerate}

\end{lemma}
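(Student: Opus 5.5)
Both parts follow the same scheme: argue by contradiction, exhibit a specific pair of vertices outside $S$, and show that no vertex of $G\odot H$ is equidistant to that pair, using the distance formula \eqref{EqDistGoH}. The key preliminary fact is that $B_G(v_i\mid v_j)=\varnothing$ forces the bisector of the relevant pair in $G\odot H$ to be empty as well.

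For (i), suppose $v_i,v_j\notin S$. Some $w\in S$ would have to satisfy $d_{G\odot H}(w,v_i)=d_{G\odot H}(w,v_j)$.
\begin{itemize}
\item If $w\in V(G)$, distances among vertices of $G$ are preserved in $G\odot H$, so $w\in B_G(v_i\mid v_j)$, which contradicts emptiness.
\item If $w\in V(H_k)$ with $k\neq i,j$, then $d(w,v_i)=d_G(v_k,v_i)+1$ and $d(w,v_j)=d_G(v_k,v_j)+1$. Hence $v_k\in B_G(v_i\mid v_j)$, again a contradiction.
\item If $w\in V(H_i)$, then $d(w,v_i)=1$ and $d(w,v_j)=d_G(v_i,v_j)+1\ge 2$, so they differ. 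The case $w\in V(H_j)$ is symmetric.
\end{itemize}
So no equalizer exists, and $v_i\in S$ or $v_j\in S$.

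For (ii), suppose there are $h\in V(H_i)\setminus S$ and $h'\in V(H_j)\setminus S$. Some $w\in S$ would need $d(w,h)=d(w,h')$.
\begin{itemize}
\item If $w\in V(G)$, this reads $d_G(w,v_i)+1=d_G(w,v_j)+1$, so $w\in B_G(v_i\mid v_j)$. Contradiction.
\item If $w\in V(H_k)$ with $k\ne i,j$, it reads $d_G(v_k,v_i)+2=d_G(v_k,v_j)+2$, so $v_k\in B_G(v_i\mid v_j)$. Contradiction.
\item If $w\in V(H_i)$, then $d(w,h)\le 2$, while $d(w,h')=d_G(v_i,v_j)+2\ge 3$, so they differ. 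The case $w\in V(H_j)$ is symmetric.
\end{itemize}
Hence every vertex of $H_i$ lies in $S$, or every vertex of $H_j$ lies in $S$.

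The only delicate point is the case where $w$ lies in the copy $H_i$ or $H_j$ attached to one of the two vertices. There the bisector hypothesis is not used; instead one must check that the distance inequality is strict, using $d_G(v_i,v_j)\ge 1$ together with the cap $\min\{d_H(\cdot,\cdot),2\}\le 2$ inside a copy. The remaining cases reduce directly to the emptiness of $B_G(v_i\mid v_j)$.
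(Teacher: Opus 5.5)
Your proposal is correct and takes the same approach as the paper. The paper simply asserts that $B_{G\odot H}(v_i\mid v_j)=\varnothing$ and $B_{G\odot H}(u\mid w)=\varnothing$ for $u\in V(H_i)$, $w\in V(H_j)$; your case analysis via \eqref{EqDistGoH}, including the strict inequalities when the equalizer lies in $H_i$ or $H_j$, supplies exactly the verification the paper leaves implicit.
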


\begin{proof}
Let $i,j \in [\n(G)]$ be such that $i\neq j$ and $B_G(v_i\mid v_j) = \varnothing$. If $\{v_i, v_j\}\cap S = \varnothing $, then $S$ cannot be a distance-equalizer set of $G \odot H$, which implies that $v_i \in S$ or $v_j \in S$. Additionally, $B_{G\odot H}(u \mid w) = \varnothing $ for any $u \in V(H_i)$ and any $w \in V(H_j)$; hence, $V(H_i)\subseteq S$ or $V(H_j)\subseteq S$.
\end{proof}

\begin{definition} \label{def_LU}
Let $G$ and $H$ be two graphs, with $G$ connected. 
For any set $S \subseteq V(G\odot H)$, we define the following two operators:

\begin{enumerate}[\rm (i)]
\item $L(S) = S \cap V(G),$
\item $U(S) = \{ v_i \in V(G) : V(H_i) \cap S \ne \varnothing \}.$
\end{enumerate}

\end{definition}

In what follows, for any graph $G$, we denote by $\mathcal{B}(G)$ the set of all vertex covers of $G$.

According to the definition of $\widehat{G}$, for any pair of vertices $v_i, v_j \in V(G)$ satisfying $ \{v_i, v_j\} \in E(\widehat{G})$, we have $B(v_i \mid v_j) = \varnothing$. Therefore, considering Definition \ref{def_LU}, we can deduce the following result from Lemmas \ref{lemmaLowerBound} and \ref{includedHi}.

\begin{lemma} \label{vertxcovsets}
Let $G$ and $H$ be two graphs, with $G$ connected. 
If $S$ is a distance-equalizer set of $G\odot H$, then 
\begin{enumerate}[{\rm (i)}]
\item $L(S) \in \mathcal{B}(\widehat{G})$,
\item $U(S) \in \mathcal{B}(\widehat{G})$,
\item $L(S) \cup U(S) = V(G).$
\end{enumerate}
\end{lemma}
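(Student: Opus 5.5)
The three items follow by reading Lemmas \ref{lemmaLowerBound} and \ref{includedHi} through the operators $L$ and $U$ of Definition~\ref{def_LU}. Throughout, let $S$ be a distance-equalizer set of $G\odot H$ and recall that $\{v_i,v_j\}\in E(\widehat{G})$ exactly when $B_G(v_i\mid v_j)=\varnothing$.

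For (i), I take an arbitrary edge $\{v_i,v_j\}\in E(\widehat{G})$. Since $B_G(v_i\mid v_j)=\varnothing$, Lemma~\ref{includedHi}(i) gives $v_i\in S$ or $v_j\in S$. Because $v_i,v_j\in V(G)$, this means $\{v_i,v_j\}\cap L(S)\neq\varnothing$. As the edge was arbitrary, $L(S)$ is a vertex cover of $\widehat{G}$.

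For (ii), I use the same edge and apply Lemma~\ref{includedHi}(ii), which gives $V(H_i)\subseteq S$ or $V(H_j)\subseteq S$. The one point needing care is that $V(H_i)$ is nonempty, since $H$ has at least one vertex as a graph. So $V(H_i)\subseteq S$ implies $V(H_i)\cap S\neq\varnothing$, hence $v_i\in U(S)$; likewise for $j$. Therefore $U(S)$ covers every edge of $\widehat{G}$.

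For (iii), the inclusion $L(S)\cup U(S)\subseteq V(G)$ is immediate from the definitions. For the reverse inclusion, I fix $i\in[\n(G)]$. Lemma~\ref{lemmaLowerBound} gives $S\cap(\{v_i\}\cup V(H_i))\neq\varnothing$. If $v_i\in S$, then $v_i\in L(S)$; otherwise $S\cap V(H_i)\neq\varnothing$, so $v_i\in U(S)$.

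No step is a genuine obstacle. The only subtlety is the nonemptiness of $V(H_i)$ in (ii), which turns the inclusion in Lemma~\ref{includedHi}(ii) into a nonempty intersection.
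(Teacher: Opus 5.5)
Your proposal is correct and follows essentially the same route as the paper, which deduces (i) and (ii) from Lemma~\ref{includedHi} and (iii) from Lemma~\ref{lemmaLowerBound} via the operators $L$ and $U$. Your explicit remark that $V(H_i)\neq\varnothing$ turns the inclusion $V(H_i)\subseteq S$ into $v_i\in U(S)$ is a detail the paper leaves implicit.
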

Observe that, 
$|L(S)| \ge \beta(\widehat{G})$ and $|U(S)| \ge \beta(\widehat{G})$, since $L(S)$ and $U(S)$ are vertex covers of $\widehat{G}$.

The next definition plays a key role in the characterization of distance-equalizer sets satisfying certain convenient conditions in corona product graphs.

\begin{definition} \label{Def:ForwardResvPair}
Let $ G $ be a graph and let $ X, Y \subseteq V(G) $ such that $X\cup Y = V(G)$.
We say that $ (X, Y) $ is a \emph{forward-equalized pair of $G$}, if for every pair of vertices
\[
(u, v) \in (X \setminus Y) \times (Y \setminus X),
\] 
there exists a vertex $ w \in V(G) $ such that
\[
d_G(w, u) = d_G(w, v) + 1.
\]
\end{definition}

Note that, for every graph $G$ we can always construct a forward-equalized pair $(X,Y)$ by taking any $X \subseteq V(G)$ and $Y = V(G)$. Hence, a forward-equalized pair always exists for every graph $G$. 

To further illustrate Definition \ref{Def:ForwardResvPair}, we provide an example. In Figure~\ref{Fig:ForwardResvPair}, the sets of vertices $X_1 =\{1,2,3,5,6\}$ and $Y_1 = \{1,2,3,4\}$ form a forward-equalized pair $(X_1,Y_1)$; however, $(Y_1,X_1)$ is not a forward-equalized pair of $G$.

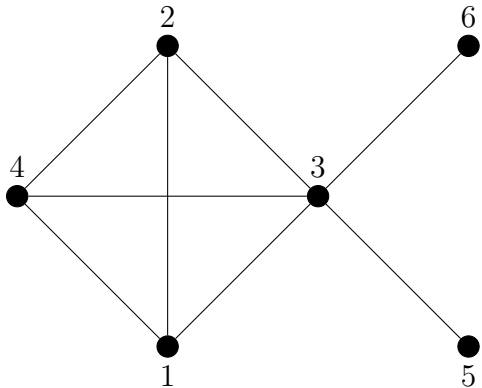
\begin{figure}[h]
  \centering
\begin{tikzpicture}[transform shape, inner sep = 1mm]
	\node[ draw, circle, fill=black, label=above:3] (v3) at (0,0) {};
	\node[ draw, circle, fill=black, label=above:6] (v6) at (2,2) {};
	\node[draw, circle, fill=black, label=below:1] (v1) at (-2,-2) {};
	\node[ draw, circle, fill=black, label=above:4] (v4) at (-4,0) {};
	\node[ draw, circle, fill=black, label=below:5] (v5) at (2,-2) {};
	\node[ draw, circle, fill=black, label=above:2] (v2) at (-2,2) {};
	\draw (v3) -- (v6);
	\draw (v3) -- (v1);
	\draw (v3) -- (v5);
	\draw (v3) -- (v2);
	\draw (v4) -- (v2);
	\draw (v4) -- (v1);
	\draw (v2) -- (v1);
	\draw (v4) -- (v3);
\end{tikzpicture}
\caption{Graph $G$ used to illustrate forward-equalized pairs.}
\label{Fig:ForwardResvPair}
\end{figure}

\begin{theorem} \label{disteqGenralcond}
Let $G$, $H$ be two graphs, with $G$ connected, and let $X, Y$ be two vertex covers of $\widehat{G}$.
The set $$ S = \left( \bigcup \limits_{v_i \in X} V(H_i) \right) \cup Y $$
is a distance-equalizer set of $G \odot H$, if and only if, $(X,Y)$ is a forward-equalized pair of $G$.
\end{theorem}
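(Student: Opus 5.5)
The plan is to enumerate every pair of vertices of $G\odot H$ lying outside $S$, classify the pairs by type using the distance formula (\ref{EqDistGoH}), and show that the forward-equalized condition is exactly what is needed for the one type of pair that the vertex-cover hypotheses do not already handle. I assume $H$ is nonempty and that $X\cup Y=V(G)$. The second assumption is implicit in the notion of a forward-equalized pair. For the converse it also follows from Lemma~\ref{vertxcovsets}, because $L(S)=Y$ and $U(S)=X$.

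The vertices outside $S$ are of two kinds: the vertices $v_k\in V(G)\setminus Y$, and the vertices of $H_i$ with $v_i\notin X$. Since $X\cup Y=V(G)$, the first kind satisfy $v_k\in X\setminus Y$ and the second kind satisfy $v_i\in Y\setminus X$. First I record a transfer principle. Suppose $w=v_k\in V(G)$ is equidistant (or has a prescribed distance difference) to two target vertices, none of which lies in $H_k$. Then either $v_k\in Y\subseteq S$, or $v_k\in X$. In the latter case every $h\in V(H_k)\subseteq S$ has all its distances to those targets shifted by exactly $+1$, so $h$ serves in place of $w$. Thus it suffices to find a suitable witness in $V(G)$, provided it is not the base vertex of the copy of $H$ containing a target.

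With this principle I check the pair types when $(X,Y)$ is forward-equalized.
\begin{enumerate}[(a)]
\item Two vertices $v_j,v_k\in V(G)\setminus Y$. Since $Y$ covers $\widehat{G}$, $\{v_j,v_k\}$ is not an edge of $\widehat{G}$, so some $w\in V(G)$ is equidistant to them. The transfer principle puts a witness in $S$.
\item Two vertices of the same $H_i$. Then $v_i\in Y\subseteq S$, and $v_i$ is at distance $1$ from both.
\item $h\in V(H_i)$ and $h'\in V(H_j)$ with $i\neq j$ and $v_i,v_j\notin X$. Since $X$ covers $\widehat{G}$, some $w\in V(G)$ satisfies $d_G(w,v_i)=d_G(w,v_j)$. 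Both distances in $G\odot H$ increase by $1$. Moreover $w\neq v_i,v_j$, since $d_G(v_i,v_j)>0$, so the transfer principle applies.
\item $v_j\in X\setminus Y$ and $h\in V(H_i)$ with $v_i\in Y\setminus X$. This is the essential case. For $w\in V(G)$ we need $d_G(w,v_j)=d_G(w,v_i)+1$, which the forward-equalized property supplies. Such a $w$ is not $v_j$, because that would give $0=d_G(v_j,v_i)+1$. If $w=v_i$ then $w\in Y\subseteq S$. Otherwise the transfer principle applies, since adding $1$ to both sides preserves the equation.
\end{enumerate}

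For the converse, suppose $S$ is distance-equalizing. Take $(u,v)=(v_j,v_i)\in(X\setminus Y)\times(Y\setminus X)$ and choose any $h\in V(H_i)$; both $v_j$ and $h$ lie outside $S$. I would show that an equalizer $w\in S$ of this pair yields a vertex of $G$ with the forward property, by examining where $w$ can lie.
\begin{itemize}
\item $w\in V(H_i)$ is impossible, since $V(H_i)\cap S=\varnothing$.
\item $w\in V(H_j)$ fails, since $d(w,v_j)=1$ while $d(w,h)\geq 3$.
\item $w\in V(H_k)$ with $k\neq i,j$ reduces to $d_G(v_k,v_j)=d_G(v_k,v_i)+1$.
\item $w\in V(G)$ gives $d_G(w,v_j)=d_G(w,v_i)+1$ directly.
\end{itemize}

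The main obstacle is bookkeeping rather than depth. Case (d) is where the forward condition enters. Care is needed with the boundary witnesses $w\in\{v_i,v_j\}$, and with the reliance on $H$ being nonempty and on $X\cup Y=V(G)$.
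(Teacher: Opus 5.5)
Your proof is correct and follows essentially the same route as the paper: the same four-case analysis for sufficiency, where your transfer principle is the paper's observation that $\{v_k\}\cup V(H_k)$ lies in the bisector and meets $S$, and the same reduction of the equalizer to a base vertex $v_k$ for necessity. You are slightly more careful than the paper about two boundary witnesses that it passes over with ``analogously'': $w=v_i$ in case (d), and $w\in V(H_j)$ in the converse.
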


\begin{proof} 

(Necessity) Note that $U(S)=X$ and $L(S)=Y$. Since $S$ is a distance equalizer set of $G \odot H$, by Lemma \ref{vertxcovsets} it follows that $X \cup
Y=V(G)$. Additionally, if $ u \in X \setminus Y $ and $ v = v_i \in Y \setminus X $, then $ u, h_i \notin S $ for any $ h_i \in V(H_i) $. Moreover, there exists some $ j \in [\n(G)] $ and a vertex $ w \in \big( \{v_j\} \cup V(H_j) \big) \cap S $ such that
$$
d_{G \odot H}(u, w) = d_{G \odot H}(w, h_i).
$$
Consequently, $ v_j \in B_{G \odot H}(u \mid h_i) $, and we obtain
$$
d_G(u, v_j) = d_{G \odot H}(u, v_j) = d_{G \odot H}(v_j, h_i)
= d_G(v_j, v_i) + d_{G \odot H}(v_i, h_i) = d_G(v_j, v_i) + 1, 
$$ as required.

Therefore, $(X,Y)$ is a forward-equalized pair of $G$.

\medskip
(Sufficiency) We differentiate all possible cases for any two vertices $u,v \in V(G\odot H)\setminus S$:
\begin{enumerate}[{Case }1:]
\item $u,v \in V(H_i)$. Note that this case is only possible if $\n(H)\geq 2$ and $v_i \in Y \subseteq S $. Thus $S\cap B_{G\odot H}(u\mid v)\neq\varnothing $.

\item $u \in V(H_i)$ and $v\in V(H_j)$, with $i\ne j$. It should be noted that this case can occur only if $v_i, v_j\in Y \setminus X$. 
Since $Y \setminus X$ is an independent set in $\widehat{G}$, we have $B_G(v_i\mid v_j)\ne\varnothing $, and as a consequence, there exists a vertex $v_k\in B_{G\odot H}(v_i\mid v_j)$. Since $\{v_k\}\cup V(H_k) \subseteq B_{G\odot H}(v_i\mid v_j) = B_{G\odot H}(u\mid v)$ and $S \cap (\{v_k\}\cup V(H_k))\neq \varnothing $, then we get $S\cap B_{G\odot H}(u\mid v)\neq\varnothing $.

\item $u,v\in V(G)$. This case is possible only when $u,v\in X \setminus Y$. Since $X \setminus Y$ is an independent set in $\widehat{G}$, there exists a vertex $v_k\in B_G(u\mid v)$. Analogously to the previous case, we have $S\cap B_{G\odot H}(u\mid v)\neq\varnothing  $.

\item $u \in V(G)$ and $v \in V(H_j)$. Observe that this scenario is only feasible if $u \in X \setminus Y$ and $v_j \in Y \setminus X$. Since $(X,Y)$ is a forward-equalized pair of $G$,  there exists a vertex $v_k \in V(G)$ such that $d_{G\odot H}(u, v_k)=d_G(u, v_k) = d_G(v_k,v_j)+1= d_{G\odot H}(v_k,v_j)+d_{G\odot H}(v_j,v)=d_{G\odot H}(v_k,v)$. Hence, $v_k \in B_{G\odot H}(u \mid v)$. Analogously to Case 2, we can conclude that $S\cap B_{G\odot H}(u\mid v)\neq\varnothing $.

\end{enumerate}
 
Based on the preceding cases, we conclude that $S$ is a distance-equalizer set of $ G \odot H $.

\end{proof}

The following result aims to provide a well-structured $\xi(G \odot H)$-set, which simplifies the computation of $\xi(G \odot H)$. 

\begin{lemma} \label{structHindep}
For any two graphs $G$ and $H$, with $G$ connected, there exists a $\xi(G \odot H)$-set $S$, such that $V(H_i) \cap S = \varnothing $ or $ V(H_i) \subseteq S $, for every $i \in [\n(G)]$.
\end{lemma}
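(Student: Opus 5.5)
Starting from an arbitrary $\xi(G\odot H)$-set $S$, I will build a set $S'$ with $|S'|\le|S|$ that meets the structural requirement and is still distance-equalizing. Put $X=U(S)$ and $Y=L(S)$. Lemma~\ref{vertxcovsets} shows that $X$ and $Y$ are vertex covers of $\widehat{G}$ and that $X\cup Y=V(G)$. For each $v_i\in X$, the set $S$ contains at least one vertex of $H_i$. I will change $S$ only on the indices $i$ where $V(H_i)\cap S$ is nonempty but proper, i.e.\ $\varnothing\ne V(H_i)\cap S\subsetneq V(H_i)$. For each such $i$, I will either remove all of $V(H_i)\cap S$ or fill in all of $V(H_i)$, and I will pay for any added vertices with vertices removed elsewhere.

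The most natural candidate is the set of Theorem~\ref{disteqGenralcond}, $S^*=\bigl(\bigcup_{v_i\in X'}V(H_i)\bigr)\cup Y'$, for suitable vertex covers $X',Y'$ of $\widehat{G}$. That theorem reduces the equalizing property to checking that $(X',Y')$ is a forward-equalized pair. Let $P=\{v_i\in X: V(H_i)\not\subseteq S\}$ be the partially covered indices. For $v_i\in P\setminus Y$, some vertex $h\in V(H_i)\setminus S$ exists. Then for every $u\in X\setminus Y$ the pair $(u,h)$ lies outside $S$. Its equalizer must be a vertex of $G$ or of a copy $H_k$ with $k\ne i$; such a vertex cannot sit in $V(H_i)$, because $h$ and every vertex of $H_i$ are at distance at most $2$, while $u$ lies outside $\{v_i\}\cup V(H_i)$. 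Arguing exactly as in the necessity part of Theorem~\ref{disteqGenralcond}, this produces a vertex $v_k$ with $d_G(u,v_k)=d_G(v_k,v_i)+1$.

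The modification for $v_i\in P$ then splits into two cases.
\begin{enumerate}[(a)]
\item If $v_i\in Y$: the vertex $v_i\in S$ together with $H_i$ forms the closed neighbourhood structure. The pairs inside $V(H_i)\setminus S$ are equalized by $v_i$, and pairs $(h,x)$ with $x$ outside this block are equalized by a vertex outside $\{v_i\}\cup V(H_i)$, as in the argument above. So the vertices of $V(H_i)\cap S$ are needed only to satisfy Lemma~\ref{lemmaLowerBound}, and $v_i$ already does that. I therefore delete $V(H_i)\cap S$, i.e.\ remove $v_i$ from $X$. The cover properties still hold because $X'\cup Y'=V(G)$ and $v_i\in Y$.
\item If $v_i\notin Y$: I add all of $V(H_i)$ to $S$ and, to compensate, remove the old vertices.
\end{enumerate}

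Case (b) is where the cardinality accounting is hard. Adding $V(H_i)$ costs $\n(H)-|V(H_i)\cap S|$ vertices. The alternative I would try first is to put $v_i$ into $S$ in place of the vertices of $V(H_i)\cap S$, exchanging at least one vertex for one. This gives $X'=X\setminus\{v_i\}$ and $Y'=Y\cup\{v_i\}$. Both remain vertex covers of $\widehat{G}$: $Y'$ grows, and $X'$ stays a cover because every $\widehat{G}$-neighbour $v_j$ of $v_i$ satisfies $v_j\in Y$ (since $v_i\notin Y$) and $v_j\in X$ (by Lemma~\ref{includedHi}(ii), since $H_i$ is not fully in $S$). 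The forward-equalized condition for $(X',Y')$ must then be checked. The only new pairs are $(u,v_i)$ with $u\in X'\setminus Y'$, and the vertex $v_k$ obtained above gives exactly $d_G(u,v_k)=d_G(v_k,v_i)+1$. Pairs $(v_i,w)$ disappear because $v_i$ no longer lies in $X'\setminus Y'$.

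Applying these replacements one index at a time, and then invoking Theorem~\ref{disteqGenralcond}, produces a distance-equalizer set of size at most $|S|$ with each $V(H_i)$ either disjoint from it or contained in it. The main obstacle is making sure the vertex $v_k$ is still available after earlier replacements. I would handle this by deriving every such $v_k$ once from the original $S$, which is possible because these witnesses lie in $V(G)$ and depend only on distances.
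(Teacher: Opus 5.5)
Your argument is correct in substance but takes a different route from the paper. The paper picks a $\xi(G\odot H)$-set minimizing $|S\setminus V(G)|$. If some block is only partially in $S$, it swaps a single vertex $u\in V(H_i)\cap S$ for $v_i$. It then checks by a direct bisector case analysis in $G\odot H$ that $(S\cup\{v_i\})\setminus\{u\}$ is still distance-equalizing, using a vertex $w\in V(H_i)\setminus S$ as a proxy when the other vertex of the pair is far from $H_i$. This contradicts either $|S|=\xi(G\odot H)$ or the minimality. You instead convert an arbitrary $\xi(G\odot H)$-set in one shot into $S^*=\bigl(\bigcup_{v_i\in X\setminus P}V(H_i)\bigr)\cup(Y\cup P)$ and let Theorem~\ref{disteqGenralcond} do the verification:
\begin{itemize}
\item $X\setminus P$ is a vertex cover of $\widehat{G}$ directly by Lemma~\ref{includedHi}(ii): every edge of $\widehat{G}$ has an endpoint whose block lies entirely in $S$, and that endpoint is never removed.
\item $Y\cup P\supseteq Y$ is a cover.
\item $|S^*|\le|S|$, since each partially covered block contributes at least one vertex to $S$ and is replaced by at most the single vertex $v_i$.
\item The forward-equalized condition needs only witnesses read off from the original $S$, so the one-index-at-a-time bookkeeping is unnecessary; you can treat all of $P$ at once.
\end{itemize}
The core idea is shared: a vertex of $V(H_i)\setminus S$ stands in for the vertices of $V(H_i)\cap S$ being removed. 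Your route makes the accounting explicit and keeps the verification inside $G$. Its cost is relying on Theorem~\ref{disteqGenralcond} and Lemma~\ref{includedHi}; both are proved before and independently of this lemma, so there is no circularity. The paper's route is local and self-contained.

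Three points need tightening.
\begin{enumerate}
\item Your claim that an equalizer of $(u,h)$ cannot lie in $V(H_i)$ is false when $u$ is adjacent to $v_i$ in $G$. A vertex $s\in V(H_i)\cap S$ not adjacent to $h$ has $d(s,u)=2=d(s,h)$. The conclusion survives, because then $v_k=v_i$ works: $d_G(u,v_i)=1=d_G(v_i,v_i)+1$.
\item Saying that the only new pairs are $(u,v_i)$ presupposes that $(X,Y)=(U(S),L(S))$ is already forward-equalized, although $S$ is unstructured. This is true, because the necessity argument of Theorem~\ref{disteqGenralcond} only uses $V(H_j)\cap S=\varnothing$ for $v_j\in Y\setminus X$. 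It must be stated, since the final application of the theorem also needs these old pairs.
\item In case (a), the cover property of $X\setminus\{v_i\}$ comes from Lemma~\ref{includedHi}(ii) exactly as in (b), not from $X'\cup Y'=V(G)$. The witnesses for pairs $(u,v_i)$ with $v_i\in P\cap Y$ come from the same argument you gave for $P\setminus Y$. With this in place, $|S^*|<|S|$ whenever $P\cap Y\neq\varnothing$, so case (a) never actually occurs for a $\xi(G\odot H)$-set. This mirrors the paper's observation that $v_i\in S$ would give $|S'|<|S|$.
\end{enumerate}
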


\begin{proof}

Let $S \subseteq V(G \odot H)$ be such that $|S \setminus V(G)|$ is minimum among all $\xi(G \odot H)$-sets $S$. 

For $\n(H) = 1$,  $V(H_i) \cap S \neq \varnothing $ implies $ V(H_i) \subseteq S$, and the statement holds.

Otherwise, if $\n(H) \geq 2$, suppose there exist $u \in V(H_i) \cap S$ and $w \in V(H_i) \setminus S$, for some $i \in [\n(G)]$. 
Let us define the set $S' \subseteq V(G \odot H)$ in the following way:
\[ S' = (S\cup \{v_i\})\setminus\{u\}. \]

We claim that $S'$ is a distance-equalizer set of $G \odot H$. To prove this, let us consider the possible cases for any pair of different vertices $x, y \in V(G \odot H) \setminus S'$.
\begin{enumerate}[{Case }1:]

\item $x,y \in V(H_i)$. As $v_i \in S'$, we have $B_{G\odot H}(x \mid y)\cap S' \neq \varnothing $.

\item $x, y \in V(G \odot H) \setminus V(H_i) $. Note that, if $u \in B_{G\odot H}(x \mid y)$, then  $v_i \in B_{G\odot H}(x \mid y)\cap S'$. Otherwise, if $u \notin B_{G\odot H}(x \mid y)$, then $v_i \notin B_{G\odot H}(x \mid y)$. Consequently, $B_{G\odot H}(x \mid y) \cap S' = B_{G\odot H}(x \mid y) \cap S$. Since $S$ is a distance-equalizer set of $G\odot H$, we have  $B_{G\odot H}(x \mid y) \cap S' \neq \varnothing $.

\item $x \in V(H_i)$ and $y \in V(G \odot H)\setminus V(H_i) $. If $ d_{G\odot H}(x,y)=2$, then $v_i \in B_{G\odot H}(x \mid y) \cap S'$. Otherwise, if $ d_{G\odot H}(x,y) > 2$, then $B_{G\odot H}(x \mid y) \cap V(H_i) = \varnothing$ and $B_{G\odot H}(x \mid y) = B_{G\odot H}(w \mid y)$. Consequently, $B_{G\odot H}(x \mid y) \cap S' \neq \varnothing $, as $B_{G\odot H}(w \mid y) \cap (S \setminus \{u\}) \neq \varnothing $. 

\end{enumerate}

As a result, for every $x, y \in V(G \odot H) \setminus S'$, it holds that $B_{G\odot H}(x \mid y) \cap S' \neq \varnothing $. This confirms that $S'$ is a distance-equalizer set of $G \odot H$.

Next, consider two cases. If $v_i \in S$, we have $|S'| = |S| - 1 < \xi(G \odot H)$, which yields a contradiction. Conversely, if $v_i \notin S$, then $|S' \setminus V(G)| < |S \setminus V(G)|$, which contradicts the minimality assumption on $S$.
Therefore, the result follows. 
\end{proof}

It should be noted that for any distance-equalizer set $S$ of $G \odot H$, satisfying conditions of Lemma~\ref{structHindep}, we have
$$S =\left( \bigcup\limits_{v_i \in U(S)} V(H_i) \right) \cup L(S) \qquad \text{and} \qquad |S| = |U(S)|\n(H) + |L(S)|. $$ 

Nevertheless, not every distance-equalizer set satisfies Lemma~\ref{structHindep}. As it is shown in Figure \ref{fig:C3xP2}, for the case of the corona product graph $C_3 \odot P_2$, we can construct a $\xi(C_3 \odot P_2)$-set, by  taking only one vertex from one of the copies of $P_2$ and two other vertices from de base graph $C_3$. 

\begin{figure}[h!]
  \centering
  \begin{tikzpicture}[transform shape, inner sep=1mm]
    
    \tikzset{
      nodesBlk/.style={draw=black, circle, fill=black, minimum size=8pt, inner sep=0pt, label distance=2.5mm},
      nodesWht/.style={draw=black, circle, fill=white, minimum size=8pt, inner sep=0pt, label distance=2.5mm}
    }

    \def\radiusInner{1.4}   
    \def\radiusOuter{2.4}   
    \def\attachOffset{18}   
    \def\labDistInner{0.8 mm} 

    \node[nodesWht, label={[label distance=\labDistInner]below:1}] (c1) at (90:\radiusInner cm) {};
    \node[nodesBlk, label={[label distance=\labDistInner]above:2}] (c2) at (210:\radiusInner cm) {};
    \node[nodesBlk, label={[label distance=\labDistInner]above:3}]  (c3) at (330:\radiusInner cm) {};

    \draw[black] (c1) -- (c2);
    \draw[black] (c2) -- (c3);
    \draw[black] (c3) -- (c1);

    \node[nodesBlk,  label=above:4] (p11) at ({90-\attachOffset}:\radiusOuter cm) {};
    \node[nodesWht, label=above:5] (p12) at ({90+\attachOffset}:\radiusOuter cm) {};
    \draw[black] (c1) -- (p11);
    \draw[black] (c1) -- (p12);
    \draw[black] (p11) -- (p12);

    \node[nodesWht,  label=below left:6] (p21) at ({210-\attachOffset}:\radiusOuter cm) {};
    \node[nodesWht, label=left:7]      (p22) at ({210+\attachOffset}:\radiusOuter cm) {};
    \draw[black] (c2) -- (p21);
    \draw[black] (c2) -- (p22);
    \draw[black] (p21) -- (p22);

    \node[nodesWht,  label=below right:8] (p31) at ({330-\attachOffset}:\radiusOuter cm) {};
    \node[nodesWht, label=right:9]       (p32) at ({330+\attachOffset}:\radiusOuter cm) {};
    \draw[black] (c3) -- (p31);
    \draw[black] (c3) -- (p32);
    \draw[black] (p31) -- (p32);

  \end{tikzpicture}
\caption{Corona product $C_3\odot P_2$, for which there exists a $\xi(C_3\odot P_2)$-set $S=\{2,3,4\}$ that does not satisfy Lemma~\ref{structHindep}.}
  \label{fig:C3xP2}
\end{figure}
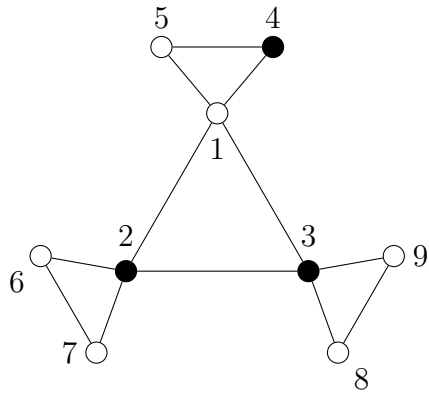

Lemma~\ref{structHindep} implies there always exists a $\xi(G \odot H)$-set which is independent of the structure of the graph $H$. Hence, the only property of $H$ that is relevant for determining $\xi(G \odot H)$ is the order.

\begin{proposition}
Let $G$ be a connected graph. If $H$ and $H'$ are two graphs such that $\n(H)=\n(H')$, then
$$ \xi(G \odot H) = \xi(G \odot H').$$
\end{proposition}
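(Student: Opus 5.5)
By symmetry in $H$ and $H'$, it suffices to prove $\xi(G \odot H') \le \xi(G \odot H)$; the reverse inequality follows by swapping the roles of the two graphs. The plan is to take a well-structured optimal set for $G\odot H$, show that its "shape" is captured by a forward-equalized pair of vertex covers of $\widehat{G}$, and then transfer that pair to $G\odot H'$ via Theorem~\ref{disteqGenralcond}.

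First, apply Lemma~\ref{structHindep} to obtain a $\xi(G\odot H)$-set $S$ such that, for every $i$, either $V(H_i)\cap S=\varnothing$ or $V(H_i)\subseteq S$. Put $X=U(S)$ and $Y=L(S)$. Then
$$S=\left(\bigcup_{v_i\in X}V(H_i)\right)\cup Y \qquad\text{and}\qquad |S|=|X|\,\n(H)+|Y|.$$
By Lemma~\ref{vertxcovsets}, both $X$ and $Y$ are vertex covers of $\widehat{G}$. Since $S$ has exactly the form appearing in Theorem~\ref{disteqGenralcond} and is a distance-equalizer set of $G\odot H$, the necessity part of that theorem shows that $(X,Y)$ is a forward-equalized pair of $G$.

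The key observation is that neither hypothesis involves $H$. Being a vertex cover of $\widehat{G}$ and being a forward-equalized pair of $G$ are both properties of $G$ alone. Consequently, the sufficiency part of Theorem~\ref{disteqGenralcond}, now applied to $G\odot H'$, shows that
$$S'=\left(\bigcup_{v_i\in X}V(H'_i)\right)\cup Y$$
is a distance-equalizer set of $G\odot H'$. Its size is $|S'|=|X|\,\n(H')+|Y|=|X|\,\n(H)+|Y|=|S|=\xi(G\odot H)$, because the union is disjoint and $\n(H')=\n(H)$. Hence $\xi(G\odot H')\le\xi(G\odot H)$, and the symmetric argument gives equality.

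There is no substantial obstacle here. The only point to check is that Theorem~\ref{disteqGenralcond} is stated for arbitrary $H$, so it applies to $H'$ without change. One should also verify that the structured form of $S$ matches the theorem's hypothesis exactly, namely $U(S)=X$ and $L(S)=Y$. The degenerate case $\n(G)=1$ is covered by the earlier remark that $\xi(K_1\odot H)=1$.
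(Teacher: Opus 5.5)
Your proposal is correct and follows essentially the same route as the paper. Both take a Lemma~\ref{structHindep}-structured optimal set, use Lemma~\ref{vertxcovsets} and Theorem~\ref{disteqGenralcond} to transfer the pair $(U(S),L(S))$ to $G\odot H'$ with the same cardinality, and conclude by symmetry; you make the use of the necessity direction of Theorem~\ref{disteqGenralcond} explicit, which the paper leaves implicit.
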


\begin{proof}
Let $S$ be a $\xi(G \odot H)$-set that satisfies the conditions of Lemma~\ref{structHindep}. Applying Lemma \ref{lemmaLowerBound} and Lemma \ref{vertxcovsets}, we have $L(S) \cup U(S) = V(G)$, and $L(S), U(S)$ are vertex covers of $\widehat{G}$.
Moreover, by Theorem \ref{disteqGenralcond},
$$S' =\left( \bigcup\limits_{v_i \in U(S)} V(H^{'}_i) \right) \cup L(S) $$ 
is a distance-equalizer set of $G \odot H'$.
Thus, $\xi(G \odot H) = |S| = |U(S)|\n(H)+|L(S)| = |S'| \ge \xi(G \odot H')$.

Analogously, if we take a $\xi(G \odot H')$-set  satisfying the conditions of Lemma~\ref{structHindep} we can deduce that $\xi(G \odot H') \ge \xi(G \odot H)$, and consequently the equality holds.

\end{proof}

We introduce the following parameter, which will be used to establish a sharp lower bound for the equidistant dimension of corona product graphs.

\begin{definition} \label{intersectminEq}
For any connected graph $G$, we define $$ \beta^*(G) = \min \limits_{X,Y \in \mathcal{B}(\widehat{G})} \{ |X \cap Y|:  (X,Y) \text{ is a forward-equalized pair of } G \}$$
\end{definition}

\begin{theorem} \label{improvedlbound}
For any two graphs $G$ and $H$, with $G$ connected, $$\xi(G \odot H) \ge \beta(\widehat{G})\n(H) + \alpha(\widehat{G}) + \beta^*(G).$$
\end{theorem}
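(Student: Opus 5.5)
Our plan is to take a $\xi(G\odot H)$-set $S$ satisfying the conditions of Lemma~\ref{structHindep}. For such a set,
$$|S| = |U(S)|\,\n(H) + |L(S)|.$$
We set $X=U(S)$ and $Y=L(S)$. By Lemma~\ref{vertxcovsets}, both $X$ and $Y$ are vertex covers of $\widehat{G}$ and $X\cup Y=V(G)$. Since $S=\bigl(\bigcup_{v_i\in X}V(H_i)\bigr)\cup Y$ is a distance-equalizer set, the necessity part of Theorem~\ref{disteqGenralcond} shows that $(X,Y)$ is a forward-equalized pair of $G$. Consequently $|X\cap Y|\ge \beta^*(G)$, by Definition~\ref{intersectminEq}.

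Next we bound the two terms separately. For the first term we use $|X|\ge\beta(\widehat{G})$, since $X$ is a vertex cover of $\widehat{G}$, so $|X|\,\n(H)\ge\beta(\widehat{G})\n(H)$. For the second term we need $|Y|\ge \alpha(\widehat{G})+\beta^*(G)$. Because $X\cup Y=V(G)$, we can write
$$|Y| = |Y\setminus X| + |X\cap Y| = \n(G)-|X| + |X\cap Y|.$$
This is the main obstacle: naively, $\n(G)-|X|$ is only at most $\alpha(\widehat{G})$, which points the wrong way. So the two terms cannot be bounded independently; the factor $\n(H)\ge 1$ has to pay for the excess of $|X|$ over $\beta(\widehat{G})$.

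To resolve this, we combine the two terms:
$$|S| = |X|\,\n(H) + \n(G)-|X| + |X\cap Y| = |X|\bigl(\n(H)-1\bigr) + \n(G) + |X\cap Y|.$$
Since $\n(H)-1\ge 0$ and $|X|\ge\beta(\widehat{G})$, we obtain
$$|S|\ge \beta(\widehat{G})\bigl(\n(H)-1\bigr)+\n(G)+\beta^*(G).$$
By Gallai's Theorem~\ref{Gallai} applied to $\widehat{G}$ (which has the same vertex set as $G$), $\n(G)-\beta(\widehat{G})=\alpha(\widehat{G})$. Substituting this gives exactly
$$\xi(G\odot H)=|S|\ge\beta(\widehat{G})\n(H)+\alpha(\widehat{G})+\beta^*(G).$$
The only delicate points are to verify that Lemma~\ref{structHindep} really lets us use the exact count $|U(S)|\n(H)+|L(S)|$, and that the necessity direction of Theorem~\ref{disteqGenralcond} applies; the latter only needs $X$ and $Y$ to be vertex covers of $\widehat{G}$, which we already have from Lemma~\ref{vertxcovsets}.
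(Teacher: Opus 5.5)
Your proof is correct and is essentially the paper's argument. You take a $\xi(G\odot H)$-set of the form given by Lemma~\ref{structHindep}, and Lemma~\ref{vertxcovsets} together with the necessity part of Theorem~\ref{disteqGenralcond} gives $|U(S)\cap L(S)|\ge\beta^*(G)$; then $\n(H)\ge 1$ absorbs the excess of $|U(S)|$ over $\beta(\widehat{G})$. Your rewriting $|S|=|X|(\n(H)-1)+\n(G)+|X\cap Y|$ is just an equivalent form of the paper's step $(|U(S)|-\beta(\widehat{G}))\n(H)\ge |U(S)|-\beta(\widehat{G})$, which is likewise followed by Gallai's Theorem.
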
 

\begin{proof}

Let $S$ be a $\xi(G \odot H)$-set, which satisfies the conditions of Lemma~\ref{structHindep}. We recall that 
$$ S = \left( \bigcup_{v_i \in U(S)} V(H_i) \right) \cup L(S).$$
Hence, by Theorem \ref{disteqGenralcond}, $(U(S), L(S))$ is a forward-equalized pair of $G$. Since $|U(S) \cap L(S)| \geq \beta^*(G)$, we obtain:
\begin{align*}
|S| & = |U(S)|\n(H)+|L(S)| \\ 
& = \beta(\widehat{G}) \n(H) + \left(|U(S)| -\beta(\widehat{G}) \right)\n(H)+|L(S)| \\ 
& \ge \beta(\widehat{G})\n(H) + |U(S)| - \beta(\widehat{G})  + |L(S)| \\
& = \beta(\widehat{G})\n(H) + \n(\widehat{G}) - \beta(\widehat{G}) + |U(S) \cap L(S)|\\
& = \beta(\widehat{G})\n(H) + \alpha(\widehat{G}) + |U(S) \cap L(S)| \\
& \geq \beta(\widehat{G})\n(H) + \alpha(\widehat{G}) + \beta^*(G)
\end{align*}
Therefore, the result follows.

\end{proof}

Although straightforward, the corollary of Theorem~\ref{improvedlbound} stated below is particularly convenient for the subsequent analysis, as it simplifies most arguments by allowing us to avoid any reference to $\beta^*(G)$.

\begin{corollary} \label{lowerbipartite}
For any two graphs $G$ and $H$, with $G$ connected, $$\xi(G \odot H) \geq \beta(\widehat{G})\n(H) + \alpha(\widehat{G}).$$
\end{corollary}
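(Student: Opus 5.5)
The corollary follows from Theorem~\ref{improvedlbound} once we observe that the extra term is never negative. By Definition~\ref{intersectminEq}, $\beta^*(G)$ is the minimum of $|X\cap Y|$ over pairs $(X,Y)$ of vertex covers of $\widehat{G}$ that form a forward-equalized pair of $G$. This minimum is taken over a nonempty family: $X=Y=V(G)$ are trivially vertex covers of $\widehat{G}$, and $(V(G),V(G))$ is forward-equalized vacuously, since $X\setminus Y=\varnothing$. Hence $\beta^*(G)$ is a well-defined cardinality and in particular $\beta^*(G)\ge 0$. Substituting this into the inequality of Theorem~\ref{improvedlbound} gives
$$\xi(G\odot H)\ \ge\ \beta(\widehat{G})\n(H)+\alpha(\widehat{G})+\beta^*(G)\ \ge\ \beta(\widehat{G})\n(H)+\alpha(\widehat{G}).$$

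As a sanity check independent of $\beta^*$, I could also argue directly. Take a $\xi(G\odot H)$-set $S$ as in Lemma~\ref{structHindep}, so that $|S|=|U(S)|\n(H)+|L(S)|$. By Lemma~\ref{vertxcovsets}, $U(S)$ and $L(S)$ are vertex covers of $\widehat{G}$ whose union is $V(G)$.

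Since $\n(H)\ge 1$, we get
$$|S|\ \ge\ \beta(\widehat{G})\n(H)+\bigl(|U(S)|-\beta(\widehat{G})\bigr)+|L(S)|.$$
Because $U(S)\cup L(S)=V(G)$, we have $|U(S)|+|L(S)|\ge \n(G)$. Combining this with Gallai's Theorem~\ref{Gallai}, the right-hand side is at least $\beta(\widehat{G})\n(H)+\alpha(\widehat{G})$.

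There is no real obstacle here. The only point needing care is that the set in Definition~\ref{intersectminEq} is nonempty, so that $\beta^*(G)$ is an honest nonnegative integer, and the pair $(V(G),V(G))$ settles that.
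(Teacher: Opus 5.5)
Your proposal is correct and matches the paper's approach. The paper states this as an immediate corollary of Theorem~\ref{improvedlbound}, obtained by discarding the nonnegative term $\beta^*(G)$; your check that the defining family is nonempty, via $(V(G),V(G))$, makes that nonnegativity explicit. Your direct \emph{sanity check} is the paper's own proof of Theorem~\ref{improvedlbound}, stopped just before the $\beta^*(G)$ step. It is valid but not a new route, and like the paper it implicitly uses $\n(H)\ge 1$.
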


The lower bound proposed in Theorem~\ref{improvedlbound} is tight. In fact,
Theorem~\ref{eqdimbipartG}, stated below, shows that when the base graph $G$ is bipartite, the equidistant dimension coincides with this bound. Moreover, this lower bound is also achieved for some other cases where $G$ is not a bipartite graph, as will be illustrated in the example of Figure \ref{fig:Fig2}. In fact, the following result provides a sufficient condition on the graph $G$ under which the lower bound given in Theorem~\ref{improvedlbound} is attained.

\begin{proposition} \label{lowerbound_eq_condition}
Let $G$ and $H$ two graphs, with $G$ connected. If there exist $ U, L \in \mathcal{B}(\widehat{G})$ such that $(U,L)$ is a forward-equalized pair of $G$, with $|U \cap L| = \beta^*(G)$ and $|U|= \beta(\widehat{G})$, 
then $$\xi(G \odot H)= \beta(\widehat{G})\n(H) + \alpha(\widehat{G}) + \beta^*(G).$$ 
\end{proposition}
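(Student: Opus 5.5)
The lower bound $\xi(G \odot H)\ge \beta(\widehat{G})\n(H) + \alpha(\widehat{G}) + \beta^*(G)$ is exactly Theorem~\ref{improvedlbound}, so only the matching upper bound needs an argument. For that I will write down an explicit distance-equalizer set of the right size, built from the pair $(U,L)$ in the hypothesis.

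Take $U, L \in \mathcal{B}(\widehat{G})$ with $(U,L)$ a forward-equalized pair of $G$, $|U\cap L| = \beta^*(G)$ and $|U| = \beta(\widehat{G})$, and set
$$S = \left( \bigcup_{v_i \in U} V(H_i) \right) \cup L.$$
Since $U$ and $L$ are both vertex covers of $\widehat{G}$ and $(U,L)$ is forward-equalized, the sufficiency direction of Theorem~\ref{disteqGenralcond} (with $X=U$, $Y=L$) shows that $S$ is a distance-equalizer set of $G\odot H$. Hence $\xi(G\odot H)\le |S|$.

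It remains to compute $|S|$. The two pieces are disjoint, so $|S| = |U|\n(H) + |L|$. By the definition of a forward-equalized pair, $U\cup L = V(G)$, which gives
$$|L| = \n(G) - |U| + |U\cap L| = \n(\widehat{G}) - \beta(\widehat{G}) + \beta^*(G).$$
Gallai's Theorem~\ref{Gallai} applied to $\widehat{G}$ turns $\n(\widehat{G}) - \beta(\widehat{G})$ into $\alpha(\widehat{G})$. Together with $|U| = \beta(\widehat{G})$ this yields
$$|S| = \beta(\widehat{G})\n(H) + \alpha(\widehat{G}) + \beta^*(G).$$
The upper bound therefore matches Theorem~\ref{improvedlbound}, and equality follows.

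There is no real obstacle. The one point to check is that the hypotheses are exactly those of Theorem~\ref{disteqGenralcond}, including the covering condition $U\cup L=V(G)$, which is built into Definition~\ref{Def:ForwardResvPair} and is what makes the cardinality computation work.
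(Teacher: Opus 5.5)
Your proposal is correct and matches the paper's proof. You build $S$ from $(U,L)$, apply the sufficiency direction of Theorem~\ref{disteqGenralcond}, compute $|S| = |U|\n(H) + (\n(G)-|U|) + |U\cap L|$ using $U\cup L = V(G)$ and Gallai's theorem, and close with the lower bound of Theorem~\ref{improvedlbound}.
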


\begin{proof}
Suppose there exist $ U, L \in \mathcal{B}(\widehat{G})$ such that $(U,L)$ is a forward-equalized pair of $G$, with $|U \cap L| = \beta^*(G)$ and $|U|= \beta(\widehat{G})$. By Theorem~\ref{disteqGenralcond}, $$ S = \left( \bigcup_{v_i \in U} V(H_i) \right) \cup L$$ is a distance-equalizer set of $G \odot H$. 
Thus,
\begin{align*}
|S| & = |U|\n(H)+|L| \\  
& = \beta(\widehat{G})\n(H) + |L \setminus U|  + |L| \\
& = \beta(\widehat{G})\n(H) + \n(\widehat{G}) - \beta(\widehat{G}) + |U \cap L|\\
& = \beta(\widehat{G})\n(H) + \alpha(\widehat{G}) + |U \cap L| \\
& = \beta(\widehat{G})\n(H) + \alpha(\widehat{G}) + \beta^*(G)
\end{align*} 

Now, applying the lower bound stated in Theorem~\ref{improvedlbound} we have $$\beta(\widehat{G})\n(H) + \alpha(\widehat{G}) + \beta^*(G) \le \xi(G \odot H) \le |S|.$$ Implying that $$\xi(G \odot H)= \beta(\widehat{G})\n(H) + \alpha(\widehat{G}) + \beta^*(G).$$
\end{proof}

In Figure \ref{fig:Fig2}, we present an example of a non-bipartite connected graph $G$, for which the conditions of Proposition \ref{lowerbound_eq_condition} are satisfied. 
Thus, the lower bound of Theorem~\ref{improvedlbound} is achieved.
In this case the sets $L = \{1, 3, 4, 6\}$ and $U = \{2, 4, 5, 7\}$ are $\beta(\widehat{G})$-sets, and $(U,L)$ is a forward-equalized pair of $G$, with $|L \cap U|$ minimum. Hence, $\beta(\widehat{G})=|U|=4$, $\beta^*(G) =|L \cap U|= 1$ and $\alpha(\widehat{G})=3$. Therefore, applying Proposition~\ref{lowerbound_eq_condition}, we get $\xi(G \odot H)= \beta(\widehat{G})\n(H) + \alpha(\widehat{G}) + \beta^*(G) = 4\cdot \n(H)  + 4$, for any graph $H$. 

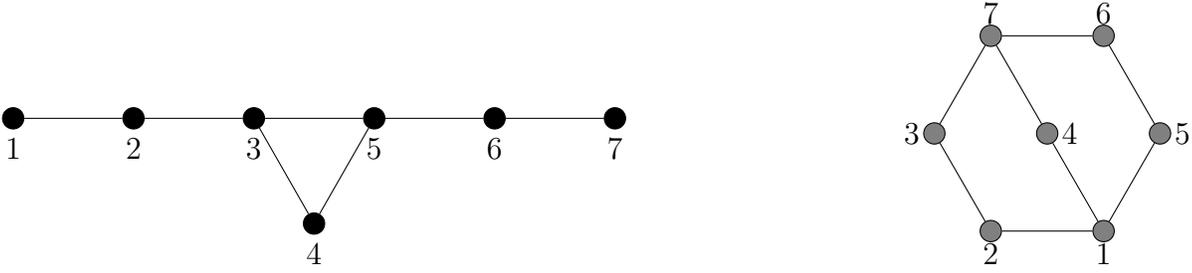
\begin{figure}[h!]
  \centering
  \begin{subfigure}{0.45\textwidth}
    \centering
	\begin{tikzpicture}[transform shape, inner sep = 1mm]
\foreach \x in {1,2,3}
{
  \pgfmathtruncatemacro{\iind}{\x};
  \pgfmathsetmacro{\xc}{(\x-1)*1.6}; 
  \node [draw=black, shape=circle, fill=black] (v\iind) at (\xc,0) {};
  \node at ([yshift=-.4 cm]v\iind) {$\iind$};
}
\foreach \x in {5,6,7}
{
  \pgfmathtruncatemacro{\iind}{\x};
  \pgfmathsetmacro{\xc}{(\x-2)*1.6}; 
  \node [draw=black, shape=circle, fill=black] (v\iind) at (\xc,0) {};
  \node at ([yshift=-.4 cm]v\iind) {$\iind$};
}
\pgfmathsetmacro{\xc}{2.5*1.6};
\pgfmathsetmacro{\yc}{-1.4};
\node [draw=black, shape=circle, fill=black] (v4) at (\xc, \yc) {};
\node at ([yshift=-.4 cm]v4) {$4$};
\foreach \ind in {1,...,6}
{
\pgfmathparse{int(\ind+1)}
\draw[black] (v\ind) -- (v\pgfmathresult);
}
\draw[black] (v3) -- (v5);
\end{tikzpicture}
\end{subfigure}
  \hfill
  \begin{subfigure}{0.45\textwidth}
    \centering
\begin{tikzpicture}[transform shape, inner sep = 1mm]
\def\radius{1.5} 
\foreach \ind in {1,...,6}
{
\pgfmathparse{360/6*\ind};
\node [draw=black, shape=circle, fill=gray] (v\ind) at (\pgfmathresult:\radius cm) {};
}

\node [draw=black, shape=circle, fill=gray] (v7) at (0,0 cm) {};
\node at ([xshift=.3 cm]v7) {4};
\node  at ([yshift=.3 cm]v1) {6};
\node  at ([yshift=.3 cm]v2) {7};
\node  at ([xshift=-.3 cm]v3) {3};
\node  at ([yshift=-.3 cm]v4) {2};
\node  at ([yshift=-.3 cm]v5) {1};
\node  at ([xshift=.3 cm]v6) {5};

\foreach \ind in {1,...,5}
\pgfmathparse{int(\ind+1)}
\draw[black] (v\ind) -- (v\pgfmathresult);

\draw[black] (v1) -- (v6);
\draw[black] (v2) -- (v7);
\draw[black] (v7) -- (v5);

\end{tikzpicture}

\end{subfigure}
  \caption{A non-bipartite graph $G$ (to the left) with $\beta^*(G) > 0$, and its corresponding empty bisector graph $\widehat{G}$ (to the right). In this example the lower bound stated in Theorem~\ref{improvedlbound} is attained.}
   \label{fig:Fig2}
\end{figure}

Since for any $\beta(\widehat{G})$-set $U$, the combination $\big(U,V(G)\big)$ constitutes a forward- equalized pair of $G$, we can establish the following relationship between  $\beta^*(G)$ and $\beta(\widehat{G})$.

\begin{remark} \label{b_aster_bound}
For any connected graph $G$, 
$$ \beta^*(G) \le \beta(\widehat{G}).$$
\end{remark}

\begin{theorem} \label{UpperBound1}
If $G$ and $H$ are two graphs such that $G$ is connected, then  $$\xi(G \odot H) \leq \beta(\widehat{G})\n(H)+ \n(G).$$
\end{theorem}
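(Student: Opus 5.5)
The plan is to exhibit an explicit distance-equalizer set of the stated size via Theorem~\ref{disteqGenralcond}. Let $U$ be a $\beta(\widehat{G})$-set, i.e., a minimum vertex cover of $\widehat{G}$, and take $X=U$ and $Y=V(G)$. Both $X$ and $Y$ are vertex covers of $\widehat{G}$: the first by choice, and the second trivially, since the whole vertex set covers every edge. We also have $X\cup Y=V(G)$. Moreover, $Y\setminus X=\varnothing$, so the condition in Definition~\ref{Def:ForwardResvPair} is vacuous. Hence $(X,Y)$ is a forward-equalized pair of $G$, which is exactly the observation already used before Remark~\ref{b_aster_bound}.

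By Theorem~\ref{disteqGenralcond}, the set
\[
S=\left(\bigcup_{v_i\in U}V(H_i)\right)\cup V(G)
\]
is therefore a distance-equalizer set of $G\odot H$. The sets $V(H_i)$ are pairwise disjoint and disjoint from $V(G)$, so $|S|=|U|\,\n(H)+\n(G)=\beta(\widehat{G})\n(H)+\n(G)$. It follows that $\xi(G\odot H)\le|S|$, which is the claimed bound.

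No step here is a real obstacle. The only points needing care are confirming that $V(G)$ qualifies as a vertex cover of $\widehat{G}$, so that the hypotheses of Theorem~\ref{disteqGenralcond} apply, and that the forward-equalized condition holds vacuously because $Y\setminus X=\varnothing$. The case $\n(G)=1$ is harmless as well: there $\widehat{G}$ has no edges, so $\beta(\widehat{G})=0$ and the bound reads $\xi\le 1$, which matches the known value $\xi(K_1\odot H)=1$.
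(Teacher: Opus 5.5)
Your proposal is correct and follows the paper's proof: take a $\beta(\widehat{G})$-set $U$ and $L=V(G)$. The pair $(U,V(G))$ is then trivially a forward-equalized pair of vertex covers of $\widehat{G}$, and Theorem~\ref{disteqGenralcond} yields a distance-equalizer set of size $\beta(\widehat{G})\n(H)+\n(G)$.
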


\begin{proof}
Let $U$ be a $\beta(\widehat{G})$- set, and let $L = V(G)$. Observe that $U$ and $L$ are vertex covers of $\widehat{G}$ and $(U,L)$ is a forward-equalized pair of $G$. Consequently, by Theorem \ref{disteqGenralcond} the set
$$S = \left( \bigcup \limits_{v_i \in U} V(H_i) \right) \cup L$$
is a distance-equalizer set of $G \odot H$. 

Now we have, $$\xi(G \odot H) \leq \ |S| =  \beta(\widehat{G})\n(H)+ \n(G),$$
which proves the claim.

\end{proof}

The next result gives a characterization under which the equidistant dimension of the corona product graph $G \odot H$ takes its minimum possible value, $\n(G)$. In fact, while this result ensures that the upper bound in Theorem~\ref{UpperBound1} is tight when $\beta(\widehat{G}) = 0$, Figure~\ref{fig:FigFish} illustrates that this bound can also be attained when $\beta(\widehat{G}) > 0$.

\begin{theorem}\label{totalEq}
If $G$ is a connected graph and $H$ any graph, then $\xi(G \odot H) = \n(G)$ if and only if either $\beta(\widehat{G}) = 0$, or $(\n(H)=1$ and $\beta^*(G)=0)$.
\end{theorem}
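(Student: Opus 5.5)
We prove both directions using the lower bound of Theorem~\ref{improvedlbound} for necessity and the construction of Theorem~\ref{disteqGenralcond} for sufficiency. Throughout, Gallai's Theorem~\ref{Gallai} is used in the form $\alpha(\widehat{G}) = \n(G) - \beta(\widehat{G})$.

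\emph{Sufficiency.} First suppose $\beta(\widehat{G})=0$. Then $\varnothing$ is a vertex cover of $\widehat{G}$, so Theorem~\ref{UpperBound1} gives $\xi(G\odot H)\le \n(G)$. Lemma~\ref{lemmaLowerBound} gives the reverse inequality, hence equality.

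Next suppose $\n(H)=1$ and $\beta^*(G)=0$. By Definition~\ref{intersectminEq} there exist vertex covers $X,Y$ of $\widehat{G}$ such that $(X,Y)$ is a forward-equalized pair of $G$ and $X\cap Y=\varnothing$. Since a forward-equalized pair satisfies $X\cup Y=V(G)$, we get $|X|+|Y|=\n(G)$. By Theorem~\ref{disteqGenralcond}, the set $S=\bigl(\bigcup_{v_i\in X}V(H_i)\bigr)\cup Y$ is a distance-equalizer set of $G\odot H$. Because $\n(H)=1$, its size is $|X|+|Y|=\n(G)$. Together with Lemma~\ref{lemmaLowerBound}, this gives $\xi(G\odot H)=\n(G)$.

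\emph{Necessity.} Assume $\xi(G\odot H)=\n(G)$ and $\beta(\widehat{G})\ge 1$. Theorem~\ref{improvedlbound} together with Gallai's Theorem yields
\[
\n(G)\ \ge\ \beta(\widehat{G})\n(H)+\n(G)-\beta(\widehat{G})+\beta^*(G),
\]
that is,
\[
\beta(\widehat{G})\bigl(\n(H)-1\bigr)+\beta^*(G)\le 0 .
\]
Both summands are nonnegative. Since $\beta(\widehat{G})\ge1$, the first summand vanishes only if $\n(H)=1$, and the second forces $\beta^*(G)=0$.

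The only point needing care is that $\beta^*(G)$ is well defined, i.e.\ that the minimum in Definition~\ref{intersectminEq} ranges over a nonempty family. This holds because, for any vertex cover $U$ of $\widehat{G}$, the pair $(U,V(G))$ is always a forward-equalized pair of $G$. I expect no real obstacle: the characterization follows by combining the earlier bounds with Gallai's identity.
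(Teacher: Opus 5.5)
Your proof is correct and follows essentially the same route as the paper. Sufficiency uses Theorem~\ref{UpperBound1} with Lemma~\ref{lemmaLowerBound}, and the disjoint forward-equalized pair with Theorem~\ref{disteqGenralcond}; necessity uses Theorem~\ref{improvedlbound} with Gallai's identity. The only difference is cosmetic: you split on $\beta(\widehat{G})\ge 1$ at the start, so you never need Remark~\ref{b_aster_bound}, which the paper invokes to tidy up the final disjunction even though that step only drops a conjunct.
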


\begin{proof}
(Necessity) Assume that $\xi(G \odot H)=\n(G)$. By Theorem~\ref{improvedlbound} and Gallai's Theorem~\ref{Gallai}, we obtain
\begin{align*}
\beta(\widehat{G})\,\n(H) + \alpha(\widehat{G}) + \beta^*(G) &\le \n(G) \\
&\le \beta(\widehat{G}) + \alpha(\widehat{G}).
\end{align*} 
Consequently,
\[
\beta(\widehat{G})\bigl(\n(H)-1\bigr) + \beta^*(G) \le 0.
\]

Since all terms involved are nonnegative, the above inequality implies that
\[
(\beta(\widehat{G})=0 \text{ and } \beta^*(G)=0) \quad \text{or} \quad (\n(H)=1  \text{ and } \beta^*(G)=0).
\]

By Remark \ref{b_aster_bound}, if $\beta(\widehat{G}) = 0$, then $\beta^*(G)=0$. Therefore, we conclude that either $\beta(\widehat{G})=0$, or $\n(H)=1$ and $\beta^*(G)=0$.

(Sufficiency) If $\beta(\widehat{G})=0$, then applying the lower bound stated in Lemma \ref{lemmaLowerBound}, together with the upper bound stated in Theorem~\ref{UpperBound1}, the result follows. 

Now, if $\beta^*(G)=0$, then there exists a forward-equalized pair $(U,L)$ such that $U,L\in\mathcal{B}(\widehat{G})$, $U\cup L=V(G)$, and $|U\cap L|=\beta^*(G)=0$. By Theorem~\ref{disteqGenralcond}, the set
$$S = \left( \bigcup \limits_{v_i \in U} V(H_i) \right) \cup L$$
is a distance-equalizer set of $G \odot H$. In particular, if $\n(H)=1$, then $|S|=\n(G)\ge \xi(G \odot H)$. Therefore, applying the lower bound from Lemma~\ref{lemmaLowerBound}, we conclude that $\xi(G \odot H) = \n(G)$.
\end{proof}

We point out that, $\beta^*(G)=0$ does not implies $\beta(\widehat{G})=0$.  
Consider the particular case in which the base graph $G$ is isomorphic to $K_5 \odot N_1$. As illustrated in Figure~\ref{fig:Fig3}, in this case, the empty bisector graph $\widehat{G}$ is isomorphic to $N_5 \odot N_1$. 
Moreover, since $\left(V(K_5 \odot N_1) \setminus V(K_5), V(K_5) \right)$ form a disjoint forward-equalized pair of $G$, it follows that $\beta^*(G)=0$, despite the fact that $\beta(\widehat{G})=5$.

The \emph{total equidistant dimension} of a graph, denoted by $\xi_t(G)$, was recently introduced in \cite{GispertFernandez2024}. This parameter provides a tool to characterize graphs for which every pair of vertices has a non empty bisector. In particular, $\xi_{t}(G)<+\infty$ is satisfied if and only if $\widehat{G}$ is an empty graph, which is equivalent to the condition $\beta(\widehat{G})=0$.
In Section~\ref{Sec:Computation}, we compute $\xi(G \odot H)$ for several families of base graphs $G$ satisfying this property.

We recall that, given two simple graphs $G_1$ and $G_2$  with disjoint vertex sets. 
The \emph{sum} (or \emph{join}) of $G_1$ and $G_2$, denoted by $G_1+G_2$, is the graph 
with vertex set 
\[
V(G_1+G_2)=V(G_1)\cup V(G_2)
\]
and edge set
\[
E(G_1+G_2)=E(G_1)\cup E(G_2)\cup \{\, \{u,v\} : u\in V(G_1),\ v\in V(G_2) \,\}.
\]
That is, $G_1+G_2$ is obtained from the disjoint union of $G_1$ and $G_2$ by adding all possible edges between every vertex of $G_1$ and every vertex of $G_2$.

The following result is a direct consequence of Theorem~\ref{totalEq} and highlights its applicability.

\begin{proposition} \label{eqdim_sumgraphs}
If $G_1$ and $G_2$ are two graphs such that at least one of them has no isolated vertices, then for any graph $H$
$$ \xi \big((G_1 + G_2) \odot H \big) = \n(G_1) + \n(G_2).$$
\end{proposition}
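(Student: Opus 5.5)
The plan is to reduce the statement to Theorem~\ref{totalEq}. Put $G=G_1+G_2$. I will show that every pair of distinct vertices of $G$ has a nonempty bisector. Then $\widehat{G}$ has no edges, so $\beta(\widehat{G})=0$, and Theorem~\ref{totalEq} gives $\xi(G\odot H)=\n(G)=\n(G_1)+\n(G_2)$ for every graph $H$.

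First, the setting is legitimate. $G$ is connected, because every vertex of $G_1$ is adjacent to every vertex of $G_2$, and both graphs are nonempty. Its order is $\n(G_1)+\n(G_2)\ge 2$. Moreover $\D(G)\le 2$, so all distances between distinct vertices are $1$ or $2$.

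Next, fix distinct $u,v\in V(G)$ and look for a vertex $w\neq u,v$ adjacent to both, so that $d_G(w,u)=d_G(w,v)=1$. There are three cases.
\begin{enumerate}[(a)]
\item If $u,v\in V(G_1)$, any vertex $w\in V(G_2)$ works by the definition of the join.
\item If $u,v\in V(G_2)$, symmetrically any $w\in V(G_1)$ works.
\item If $u\in V(G_1)$ and $v\in V(G_2)$, the hypothesis is needed.
\end{enumerate}

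For the mixed case (c), suppose $G_1$ has no isolated vertices. Then $u$ has a neighbour $w\in V(G_1)$, and $w\neq v$. Since $w\in V(G_1)$ and $v\in V(G_2)$, $w$ is also adjacent to $v$, so $w\in B_G(u\mid v)$. If instead $G_2$ is the graph without isolated vertices, take a neighbour $w$ of $v$ inside $G_2$; it is adjacent to $u$ by the join.

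The only delicate point is this mixed case, and it is exactly where the assumption on isolated vertices enters. Without that assumption the conclusion can fail: in $N_1+N_1=K_2$ the two vertices have empty bisector. With all three cases settled, $\widehat{G}$ is empty, $\beta(\widehat{G})=0$, and the equality follows from Theorem~\ref{totalEq}.
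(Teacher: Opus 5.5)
Your proposal is correct and follows the paper's proof essentially verbatim: the same three-case check (both vertices in $G_1$, both in $G_2$, or one in each, using a neighbour inside the graph without isolated vertices) shows $\beta(\widehat{G_1+G_2})=0$, and then Theorem~\ref{totalEq} gives the value $\n(G_1)+\n(G_2)$. Your added remarks on connectivity and on $N_1+N_1=K_2$ are accurate extras, but they do not change the argument.
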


\begin{proof}
We aim to prove that $\beta\big(\widehat{G_1 + G_2} \big) = 0$.  
Without loss of generality let us assume that $G_1$ has no isolated vertex. For any pair of vertices $u,v \in V(G_1 + G_2) $ we differentiate all possible cases:

\begin{enumerate}[{Case }1:]
\item If $u \in V(G_1)$ and $v \in V(G_2)$, then, since $G_1$ has no isolated vertices, there exists a vertex $w \in N_{G_1}(u) $. Hence, we have $d_{G_1 + G_2}(w,u)=1=d_{G_1 + G_2}(w,v)$.
\item If $u \in V(G_1)$ and $v \in V(G_1)$, then for any vertex $w \in V(G_2)$, it holds $d_{G_1 + G_2}(w,u)=1=d_{G_1 + G_2}(w,v)$.
\item If $u \in V(G_2)$ and $v \in V(G_2)$, then for any vertex $w \in V(G_1)$, it holds $d_{G_1 + G_2}(w,u)=1=d_{G_1 + G_2}(w,v)$.
\end{enumerate}
In any case $B_{G_1 + G_2}(u \mid v) \ne \varnothing$. Therefore the empty bisector graph $\widehat{G_1 + G_2}$ has no edges, which is equivalent to $\beta\big(\widehat{G_1 + G_2} \big) = 0$. Now applying  Theorem~\ref{totalEq}, we get
$$ \xi \big((G_1 + G_2) \odot H \big) = \n(G_1 + G_2) = \n(G_1) + \n(G_2).$$
\end{proof}

The following three results lead to the computation of the equidistant dimension in the case where $G$ is a bipartite graph.

\begin{proposition} \label{form_coroncg}
Let $G$ and $H$ be two graphs such that $G$ is connected. If the empty bisector graph $\widehat{G}$ is a complete bipartite graph, then
$$\xi(G\odot H) = \beta(\widehat{G})\n(H)+ \alpha(\widehat{G})$$
\end{proposition}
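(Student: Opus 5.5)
The plan is to sandwich $\xi(G\odot H)$ between matching bounds. The lower bound $\xi(G\odot H)\ge \beta(\widehat{G})\n(H)+\alpha(\widehat{G})$ is exactly Corollary~\ref{lowerbipartite}, so all the work lies in constructing a distance-equalizer set of that size. By Proposition~\ref{lowerbound_eq_condition}, together with the computation in its proof, it suffices to exhibit vertex covers $U,L$ of $\widehat{G}$ such that $(U,L)$ is a forward-equalized pair, $U\cap L=\varnothing$ and $|U|=\beta(\widehat{G})$. Then Theorem~\ref{disteqGenralcond} gives a distance-equalizer set of size $|U|\n(H)+|L|=\beta(\widehat{G})\n(H)+\alpha(\widehat{G})$, using $|L|=\n(G)-\beta(\widehat{G})=\alpha(\widehat{G})$ by Gallai's Theorem~\ref{Gallai}.

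For the construction, write $\widehat{G}=K_{A,B}$ with $V(G)=A\cup B$, $A\cap B=\varnothing$, and $|A|\le |B|$, with $A,B\neq\varnothing$. In a complete bipartite graph the minimum vertex cover is the smaller part, so $\beta(\widehat{G})=|A|$ and $\alpha(\widehat{G})=|B|$. Take $U=A$ and $L=B$. Both parts are vertex covers of $K_{A,B}$, they are disjoint, and $U\cup L=V(G)$. If one prefers not to rely on the phrasing "complete bipartite" allowing an empty part, the edgeless case is handled separately by Theorem~\ref{totalEq}, which gives $\xi=\n(G)=\alpha(\widehat{G})$.

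The main obstacle is verifying that $(A,B)$ is a forward-equalized pair. For every $u\in A$ and $v\in B$ we need some $w\in V(G)$ with $d_G(w,u)=d_G(w,v)+1$. Since $u\in A$ and $v\in B$, the pair $\{u,v\}$ is an edge of $\widehat{G}$, so $B_G(u\mid v)=\varnothing$. I would first show that $u$ and $v$ must then be adjacent in $G$. Suppose $d_G(u,v)\ge 2$. Walking along a shortest $u$–$v$ path, the quantity $d(x,u)-d(x,v)$ changes by steps of $\pm 2$ between consecutive vertices, starting at $-d$ and ending at $d$ where $d=d_G(u,v)$. If $d$ is even, it hits $0$ at the midpoint, which contradicts the empty bisector. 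If $d$ is odd and at least $3$, the difference is odd everywhere along the path, so parity alone gives no contradiction, and this case needs more care. The cleanest route is to avoid it: simply take $w=v$. Then $d_G(v,v)=0$, and we need $d_G(v,u)=1$, so adjacency of $u$ and $v$ in $G$ is exactly what is required. I therefore expect the key claim to be that every edge of $\widehat{G}$ is an edge of $G$ when $\widehat{G}$ is complete bipartite.

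To prove this claim, I would argue via the triangle inequality. Take a vertex $x$ adjacent to $u$ on a shortest path towards $v$. The pair $\{x,v\}$ has $x\neq u$. If $x\in B$, then $\{u,x\}$ is an edge of $\widehat{G}$. If $x\in A$, then $\{x,v\}$ is an edge of $\widehat{G}$. In either case the $\widehat{G}$-neighbourhood structure forces strong constraints: for instance, a vertex $w$ with $|d(w,u)-d(w,v)|$ minimal, combined with complete bipartiteness, should lead to a contradiction. If this fails in general, the fallback is to choose $w$ as the neighbour of $v$ on a shortest $v$–$u$ path, which works whenever $d(w,u)=d(w,v)+1$ holds along geodesics. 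Indeed, for $w$ on a geodesic at distance $k$ from $v$ we get $d(w,u)-d(w,v)=d-2k$, which equals $1$ when $k=(d-1)/2$ and $d$ is odd. Since the even case already contradicts the empty bisector, this always works. So the fallback is in fact a complete argument: for $d$ odd, pick $w$ on a $u$–$v$ geodesic with $d(w,v)=(d-1)/2$. No adjacency claim is needed after all, and this resolves the obstacle.
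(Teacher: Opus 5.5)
Your final argument is correct and is essentially the paper's proof: take the smaller part of $\widehat{G}$ as $U$ and the larger as $L$, note that an empty bisector forces $d_G(u,v)$ to be odd, pick $w$ on a $u$--$v$ geodesic with $d_G(w,v)=(d_G(u,v)-1)/2$ to get the forward-equalized property, and combine Theorem~\ref{disteqGenralcond} with Corollary~\ref{lowerbipartite}. When you write it up, delete the detour about adjacency: the claim that every edge of $\widehat{G}$ is an edge of $G$ is false. For example, the two end vertices of $P_4$ are adjacent in $\widehat{P_4}\cong K_{2,2}$ but at distance $3$ in $P_4$. Your separate treatment of the edgeless case via Theorem~\ref{totalEq} is a harmless extra.
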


\begin{proof}

Let $L$ and $U$ be the partite sets of $\widehat{G}$, with $|L| \ge |U|$.
Since $\widehat{G}$ is a complete bipartite graph, we have that $L$ is an $\alpha(\widehat{G})$-set and $U$ is a $\beta(\widehat{G})$-set. Additionally, $L$ and $U$ are vertex covers of $\widehat{G}$.

Consider the set
\[
S = \left( \bigcup_{v_i \in U} V(H_i) \right) \cup L.
\]

For any $u \in U$ and $v \in L$, we have $u \in N_{\widehat{G}}(v)$. Hence, $B_G(v \mid v) = \varnothing$, and consequently, $d_G(u,v)$ is odd. Thus, there exists a vertex $w$ on a shortest path between $u$ and $v$ such that $d_G(u,w) = d_G(w,v) + 1$. Therefore, $(U, L)$ is a forward-equalized set of $G$.

Applying Theorem~\ref{disteqGenralcond}, it follows that $S$ is a distance-equalizer set of $G \odot H$. Thus,
\[
\xi(G \odot H) \le |S| = \beta(\widehat{G})\n(H) + \alpha(\widehat{G}).
\]

Furthermore, by Corollary \ref{lowerbipartite}, we have the corresponding lower bound
\[
\xi(G \odot H) \ge \beta(\widehat{G})\n(H) + \alpha(\widehat{G}).
\]

As a consequence, we conclude that
\[
\xi(G \odot H) = \beta(\widehat{G})\n(H) + \alpha(\widehat{G}).
\]
\end{proof}

\begin{lemma} \label{complete_bipart}
If $G$ is a bipartite connected graph, then the corresponding empty bisector graph $\widehat{G}$ is  complete bipartite.
\end{lemma}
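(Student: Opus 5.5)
Let $G$ be connected and bipartite with bipartition $V(G)=A\cup B$. The plan is to show that $\widehat{G}$ is exactly the complete bipartite graph with partite sets $A$ and $B$. That is, for distinct $u,v\in V(G)$, we will prove that $B_G(u\mid v)=\varnothing$ if and only if $u$ and $v$ lie in different partite sets. We first recall the standard parity fact: in a connected bipartite graph, $d_G(x,y)$ is even if $x,y$ lie in the same part and odd otherwise. Equivalently, $d_G(w,u)\equiv d_G(w,v)+d_G(u,v)\pmod 2$ for all $w,u,v$.

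For the first direction, take $u\in A$ and $v\in B$. For any $w\in V(G)$, exactly one of $d_G(w,u)$ and $d_G(w,v)$ is even, according to which part contains $w$. Hence $d_G(w,u)\neq d_G(w,v)$, so $B_G(u\mid v)=\varnothing$ and $\{u,v\}\in E(\widehat{G})$. Thus every $A$--$B$ pair is an edge of $\widehat{G}$.

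For the converse, take distinct $u,v$ in the same part, so $d_G(u,v)=2k$ with $k\ge 1$. Choose a shortest $u$--$v$ path and let $w$ be its vertex at distance $k$ from $u$. Since the path is a geodesic, $d_G(w,u)=k=d_G(w,v)$, so $w\in B_G(u\mid v)$ and $\{u,v\}\notin E(\widehat{G})$. Hence $\widehat{G}$ has no edges inside $A$ or inside $B$, and it contains all $A$--$B$ edges. So $\widehat{G}=K_{|A|,|B|}$.

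No step is a real obstacle. The only care needed is justifying the parity fact via connectivity: every walk between vertices of the same part has even length. We should also note that when $\n(G)\ge 2$ both parts are nonempty, so $\widehat{G}$ is genuinely complete bipartite and not edgeless, matching the use in Proposition~\ref{form_coroncg}.
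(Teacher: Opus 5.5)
Your proof is correct and follows essentially the same route as the paper. For a cross-part pair, you rule out a bisector vertex because the two distances from $w$ have opposite parities, while the paper says such a $w$ would yield an even $u$--$v$ walk. For a same-part pair, both you and the paper take the midpoint of a geodesic of even length.
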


\begin{proof}
Let $A$ and $B$ be the partite sets of $G$. We will distinguish two cases for any two different vertices $u$, $v \in V(G)$:

\begin{enumerate}[{Case }1:] \item $u \in A$ and $v\in B$. Suppose that $B_G(u \mid v) \neq \varnothing $, then there exists a path of even length between $u$ and $v$, in contradiction with $G$ being a bipartite graph. Hence, $B_G(u \mid v) = \varnothing $ and $\{u,v\}\in E(\widehat{G})$.

\item $u,v \in A$ or $u,v \in B$. The distance between two vertices in the same partite set is even, consequently there exists a vertex $w \in V(G)$ such that $w\in B_G(u \mid v)$ which leads to $\{u,v\}\notin E(\widehat{G})$.
\end{enumerate}

Therefore, $A$ and $B$ are partite sets of $\widehat{G}$ and every pair of vertices in different partite sets are adjacent in $\widehat{G}$.
\end{proof}

Considering that every connected bipartite graph has unique partite sets, together with Proposition~\ref{form_coroncg} and Lemma~\ref{complete_bipart}, we arrive at the following conclusion.

\begin{theorem} \label{eqdimbipartG}
If $G$ is a connected bipartite graph, with partite sets $A,B$ such that $|A| \geq |B|$, then for any graph $H$, $$\xi(G\odot H) = |B|\n(H)+ |A|.$$
\end{theorem}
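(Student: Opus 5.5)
The theorem follows by combining Lemma~\ref{complete_bipart} with Proposition~\ref{form_coroncg}. The only work is to identify $\beta(\widehat{G})$ and $\alpha(\widehat{G})$ with $|B|$ and $|A|$.

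First, since $G$ is connected and bipartite with partite sets $A$ and $B$, Lemma~\ref{complete_bipart} says that $\widehat{G}$ is complete bipartite with the same partite sets. So $\widehat{G}\cong K_{|A|,|B|}$. Because $G$ is connected of order at least $2$, both $A$ and $B$ are nonempty. Proposition~\ref{form_coroncg} then applies directly and gives
\[
\xi(G\odot H)=\beta(\widehat{G})\,\n(H)+\alpha(\widehat{G}).
\]

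Next we compute the two parameters of $K_{|A|,|B|}$ with $|A|\ge|B|\ge 1$.

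\emph{Independence number.} Every independent set of $\widehat{G}$ lies entirely inside $A$ or entirely inside $B$. This is because any vertex of $A$ is adjacent in $\widehat{G}$ to every vertex of $B$. Hence $\alpha(\widehat{G})=\max\{|A|,|B|\}=|A|$.

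\emph{Vertex cover number.} By Gallai's Theorem~\ref{Gallai},
\[
\beta(\widehat{G})=\n(G)-\alpha(\widehat{G})=|A|+|B|-|A|=|B|.
\]
Substituting both values into the formula above gives $\xi(G\odot H)=|B|\,\n(H)+|A|$.

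The one point needing care is that the partite sets of $\widehat{G}$ coincide with $A$ and $B$. This is exactly what the proof of Lemma~\ref{complete_bipart} establishes: pairs inside the same part are non-adjacent in $\widehat{G}$, and pairs across the parts are adjacent. It also uses the uniqueness of the bipartition of a connected bipartite graph. With that in place, the remaining steps are immediate.
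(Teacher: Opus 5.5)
Your proof is correct and follows the paper's argument exactly: Lemma~\ref{complete_bipart} shows that $\widehat{G}$ is complete bipartite with parts $A$ and $B$, and Proposition~\ref{form_coroncg} then gives $\xi(G\odot H)=\beta(\widehat{G})\,\n(H)+\alpha(\widehat{G})$. Your identification $\alpha(\widehat{G})=|A|$ and $\beta(\widehat{G})=|B|$ (via Gallai's Theorem) makes explicit a step that the paper leaves implicit.
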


\subsection{Computation of $\xi(G \odot H)$ for some families of graphs $G$} \label{Sec:Computation}

The preceding results allow us to determine the  equidistant dimension of $G \odot H$ for several important families of connected graphs $G$ and for any graph $H$. We begin by recalling some necessary notation.

We denote by $K_{n_1,\ldots,n_p}$ the \emph{complete $p$-partite graphs}, with vertex set $A_1 \cup \cdots \cup A_p$ such that $A_1, \ldots, A_p$, are pairwise disjoint, verifying $|A_i| = n_i \geq 1$. Finally the \emph{bistar graphs} are denoted by $K_2(r,s)$ and
the \emph{hypercube} of dimension $n$ is denoted by $Q_n$.

\begin{proposition} \label{familieseqidim}

Let $n, r, s, p, n_1, \ldots, n_p$ be positive integers such that $n \geq 2, s \geq r, p \geq 3$ and $n_p \geq \cdots \geq n_1 \geq 1$. For every graph $H$ the following statements hold:

\begin{enumerate}[{\rm (i)}]
\item $\xi(K_n \odot H) = \left\lbrace
\begin{array}{cl}
  n, & \text{if } n \neq 2
\\ \n(H) + 1, & \text{if } n = 2
\end{array} \right. $;
\item $\xi(K_{r,s} \odot H) = r \cdot \n(H)  + s$;
\item $\xi(K_2(r,s)\odot H) = r \cdot \n(H)  + s$;
\item $\xi(K_{n_1,\ldots,n_p} \odot H) = \sum\limits_{i=1}^{p}n_i$;
\item $\xi(W_n \odot H) = n , \quad \text{if } n \ge 4 $;
\item $\xi(Q_n \odot H) = 2^{n-1}(\n(H) +1)$;
\item $\xi(P_{n}\odot H) =  \left\lfloor \dfrac{n}{2} \right\rfloor \n(H)+ \left\lceil \dfrac{n}{2} \right\rceil$;
\item $ \xi(C_n \odot H) = \left\lbrace
\begin{array}{cl}
  n, & \text{if } n \equiv 1 \Mod{2}
\\ \dfrac{n(\n(H) +1)}{2}, & \text{if } n \equiv 0 \Mod{2}
\end{array} \right.$
\end{enumerate}

\end{proposition}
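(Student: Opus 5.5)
The plan is to sort the eight families into two groups. In the first group the base graph has every bisector nonempty, so $\beta(\widehat{G})=0$ and Theorem~\ref{totalEq} gives $\xi(G\odot H)=\n(G)$. In the second group the base graph is connected bipartite, so Theorem~\ref{eqdimbipartG} gives $\xi(G\odot H)=|B|\n(H)+|A|$. The two exceptional cases, $K_2$ and even cycles, also land in the bipartite group.

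\textbf{Bipartite cases.}
\begin{itemize}
\item For (ii), $K_{r,s}$ has partite sets of sizes $r\le s$, giving $r\,\n(H)+s$.
\item For (iii), the bistar $K_2(r,s)$ is a tree with two adjacent centers. One center has $r$ leaves and the other has $s$ leaves. Its bipartition is $\{c_1\}\cup(\text{leaves of } c_2)$ versus $\{c_2\}\cup(\text{leaves of } c_1)$, of sizes $s+1$ and $r+1$. Here I must check the paper's convention: if $K_2(r,s)$ has $r$ and $s$ as the part sizes (order $r+s$), the formula $r\,\n(H)+s$ follows directly. I will adopt whichever convention makes the sizes $r\le s$ and state it.
\item For (vi), $Q_n$ is bipartite with equal parts of size $2^{n-1}$, giving $2^{n-1}(\n(H)+1)$.
\item For (vii), $P_n$ has parts of sizes $\lceil n/2\rceil$ and $\lfloor n/2\rfloor$.
\item For (viii) with $n$ even, $C_n$ has two parts of size $n/2$.
\item For (i) with $n=2$, $K_2=P_2$ gives $\n(H)+1$.
\end{itemize}

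\textbf{Nonempty-bisector cases.} For each remaining family I show that every pair $u,v$ has a vertex $w$ with $d(w,u)=d(w,v)$.
\begin{itemize}
\item For $K_n$ with $n\ge3$, any third vertex is at distance $1$ from both.
\item For $K_{n_1,\dots,n_p}$ with $p\ge3$: if $u,v$ lie in the same part, any vertex of another part works. If they lie in different parts, a vertex of a third part works, which exists since $p\ge3$. Alternatively one can view it as a join and invoke Proposition~\ref{eqdim_sumgraphs}.
\item For $W_n$, I read it as $K_1+C_{n-1}$ of order $n$, so $n\ge4$ means the rim cycle has at least $3$ vertices. Proposition~\ref{eqdim_sumgraphs} then applies, since $C_{n-1}$ has no isolated vertices, giving $\xi=n$.
\item For $C_n$ with $n$ odd, take $u,v$ at distance $d$. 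If $d$ is even, the midpoint of the shorter $u$–$v$ path is equidistant. If $d$ is odd, the other arc has length $n-d$, which is even, and its midpoint $w$ is at distance $(n-d)/2$ from both along that arc. I still need to confirm that this is the true distance in both directions, that is, $(n-d)/2\le$ the competing route. Since both competing routes pass through $u$ or $v$ and are symmetric, the distances are equal in any case, so $w\in B(u\mid v)$.
\end{itemize}
In every case $\widehat{G}$ is empty, and Theorem~\ref{totalEq} yields $\n(G)$.

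\textbf{Main obstacle.} The hardest part is not mathematical but notational. I need to pin down the conventions for $K_2(r,s)$ and $W_n$ so that the stated formulas (part sizes $r\le s$, and wheel of order $n$) match. For the odd cycle, I will make the symmetry argument explicit: reflection through $w$ swaps $u$ and $v$, so $d(w,u)=d(w,v)$ automatically.
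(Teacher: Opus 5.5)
Your proposal is correct and follows the paper's proof essentially verbatim. The bipartite families ($K_2$, $K_{r,s}$, $K_2(r,s)$, $Q_n$, $P_n$, even $C_n$) go through Theorem~\ref{eqdimbipartG}, while $K_n$ ($n\ge 3$), $K_{n_1,\ldots,n_p}$, $W_n\cong K_1+C_{n-1}$ and odd $C_n$ go through $\beta(\widehat{G})=0$ and Theorem~\ref{totalEq} (or Proposition~\ref{eqdim_sumgraphs}); your reflection argument for odd cycles fills in what the paper only asserts. Your caution about $K_2(r,s)$ is justified: the paper's proof simply assumes partite sets of sizes $r$ and $s$, whereas under the common convention of $r$ and $s$ leaves the same theorem gives $(r+1)\n(H)+s+1$.
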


\begin{proof}

Let us analyze each case separately.
\begin{enumerate}[{\rm (i)}]

\item The graph $K_2$ is bipartite with $\alpha(K_2)=1=\beta(K_2)$. Hence, by Theorem \ref{eqdimbipartG} we get $\xi(K_2 \odot H) =\n(H)+1$. Otherwise, if $n \ne 2$ we have $\beta(\widehat{K_n})=0$, and applying Theorem~\ref{totalEq} it follows that $\xi(K_n \odot H) =n$.

\item $K_{r,s}$ is a bipartite graph, with partite sets of cardinality $r,s$. Hence, $\xi(K_2(r,s)\odot H) = r \cdot \n(H)  + s$ as a direct consequence of Theorem~\ref{eqdimbipartG}.

\item Similarly to the previous case, $K_2(r,s)$ is a bipartite graph, with partite sets of cardinality $r,s$. Thus $\xi(K_2(r,s)\odot H) = r \cdot \n(H)  + s$.

\item For $p \geq 3$ every pair of vertices in $K_{n_1,\ldots,n_p}$ always have a common neighbour. Hence $\xi_t(K_{n_1,\ldots,n_p}) \leq +\infty$, which leads to $\xi(K_{n_1,\ldots,n_p} \odot H) = \n(K_{n_1,\ldots,n_p})$.

\item Since $W_n \cong K_1+C_{n-1}$, applying Proposition~\ref{eqdim_sumgraphs}, we have  $\xi(W_n \odot H) = \n(K_1)+\n(C_{n-1})=n$.

\item The hyper cube $Q_n$ is a bipartite graph, with partite sets of cardinality $ 2^{n-1}$. Thus the result follows from Theorem~\ref{eqdimbipartG}.

\item The path graph is a bipartite graph with partite sets whose cardinalities are $|A| = \left\lceil \dfrac{n}{2} \right\rceil$ and $|B| = \left\lfloor \dfrac{n}{2} \right\rfloor$. Therefore, by Theorem~\ref{eqdimbipartG}, the result follows.

\item If $n \equiv 0 \Mod{2}$, then $C_n$ is bipartite with partite sets of cardinality $ \dfrac{n}{2}$, and applying Theorem \ref{eqdimbipartG}, we get $\xi(C_n \odot H)= \dfrac{n(\n(H) +1)}{2}$. Otherwise, $\beta(\widehat{C_n})=0$, thus, by Theorem~\ref{totalEq}, we have $\xi(C_n \odot H) = n$.

\end{enumerate}

\end{proof}

\section{Threshold-dependent linearity of $\xi(G \odot H)$ respect to $\mathrm{n}(H)$} \label{Sec:linnh}

Based on the upper and lower bounds established in Theorem~\ref{improvedlbound} and Theorem~\ref{UpperBound1} respectively, it follows that, for any connected graph $G$,
$$
\lim_{\n(H) \to +\infty} \frac{\xi(G \odot H)}{\n(H)} = \beta(\widehat{G}).
$$
The above limit shows that $\xi(G \odot H)$ grows asymptotically linearly with $\n(H)$, with asymptotic growth rate given by $\beta(\widehat{G})$.

In this section, we go beyond this asymptotic characterization and prove a stronger result: for any connected graph $G$, there exists a threshold beyond which the equidistant dimension of $G \odot H$ is not only asymptotically linear, but in fact a linear function of $\n(H)$, with slope $\beta(\widehat{G})$.

We first establish a few preliminary results that will be instrumental in the analysis.

\begin{lemma} \label{Ubounds}
Let $G$ and $H$ be two graphs, with $G$ connected, and let $S$ be a $\xi(G \odot H)$-set satisfying the conditions of Lemma~\ref{structHindep}. 
If $\n(H) \ge 2$, then
\[
|U(S)| \le \beta(\widehat{G}) 
\;+\; 
\min\!\left\{
\frac{\alpha(\widehat{G})}{\n(H)},
\;
\frac{\beta(\widehat{G}) - \beta^*(G)}{\,\n(H)-1\,}
\right\}.
\]

\end{lemma}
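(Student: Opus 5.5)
Write $U=U(S)$, $L=L(S)$ and $n_H=\n(H)$. We compare $|S|=|U|\,n_H+|L|$ with the sizes of two explicit distance-equalizer sets built via Theorem~\ref{disteqGenralcond}. Since $S$ satisfies Lemma~\ref{structHindep}, we have
\[
S=\Bigl(\bigcup_{v_i\in U}V(H_i)\Bigr)\cup L .
\]
By Lemma~\ref{vertxcovsets}, $U$ and $L$ are vertex covers of $\widehat{G}$ with $U\cup L=V(G)$, so $|L|\ge |V(G)\setminus U|=\n(G)-|U|$. Also, by Theorem~\ref{disteqGenralcond}, $(U,L)$ is a forward-equalized pair, hence $|U\cap L|\ge\beta^*(G)$.

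\textbf{First bound.} Theorem~\ref{UpperBound1} gives $|S|\le \beta(\widehat{G})n_H+\n(G)$. Combining this with $|L|\ge \n(G)-|U|$ yields
\[
|U|n_H+\n(G)-|U|\le \beta(\widehat{G})n_H+\n(G),
\]
that is, $|U|(n_H-1)\le \beta(\widehat{G})n_H$. This is weaker than needed, so I sharpen it using $|L|\ge \beta(\widehat{G})$: $L$ is a vertex cover, and moreover $|L|\ge \max\{\beta(\widehat{G}),\,\n(G)-|U|+\beta^*(G)\}$. Using only $|L|\ge\beta(\widehat{G})$ gives
\[
|U|n_H+\beta(\widehat{G})\le \beta(\widehat{G})n_H+\beta(\widehat{G})+\alpha(\widehat{G}),
\]
by Gallai's Theorem~\ref{Gallai}. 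Hence $|U|\le \beta(\widehat{G})+\alpha(\widehat{G})/n_H$, which is the first term of the minimum.

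\textbf{Second bound.} Here I use the decomposition $|L|=|L\setminus U|+|U\cap L|=\n(G)-|U|+|U\cap L|\ge \n(G)-|U|+\beta^*(G)$. Then
\[
|S|\ge |U|(n_H-1)+\n(G)+\beta^*(G).
\]
Comparing with the upper bound $\beta(\widehat{G})n_H+\n(G)$ from Theorem~\ref{UpperBound1} gives
\[
|U|(n_H-1)\le \beta(\widehat{G})n_H-\beta^*(G)=\beta(\widehat{G})(n_H-1)+\beta(\widehat{G})-\beta^*(G).
\]
Dividing by $n_H-1>0$, which is where the hypothesis $n_H\ge 2$ is used, yields
\[
|U|\le \beta(\widehat{G})+\frac{\beta(\widehat{G})-\beta^*(G)}{n_H-1}.
\]
Taking the minimum of the two bounds finishes the proof.

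The main obstacle is choosing the right lower bound for $|L|$ in each comparison: $|L|\ge\beta(\widehat{G})$ for the first term and $|L|\ge \n(G)-|U|+\beta^*(G)$ for the second. One must also check that $\beta^*(G)$ is a valid lower bound here, which holds because $(U,L)$ is a forward-equalized pair of vertex covers of $\widehat{G}$ by Theorem~\ref{disteqGenralcond}. Both comparisons are against the same upper bound of Theorem~\ref{UpperBound1}, which is legitimate because $S$ is a minimum set.
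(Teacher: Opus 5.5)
Your proof is correct and follows essentially the same route as the paper. Both bounds come from comparing $|S|=|U(S)|\n(H)+|L(S)|$ with the upper bound of Theorem~\ref{UpperBound1}: for the first term you use $|L(S)|\ge\beta(\widehat{G})$ together with Gallai's theorem, and for the second you use $|L(S)|=\n(G)-|U(S)|+|U(S)\cap L(S)|$ with $|U(S)\cap L(S)|\ge\beta^*(G)$.
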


\begin{proof}
By Lemma~\ref{vertxcovsets}, we have that $U(S)$ and $L(S)$ are a vertex covers of $\widehat{G}$. Applying the upper bound stated in Theorem~\ref{UpperBound1}, we obtain:
\begin{align*}
& |U(S)|\n(H) + |L(S)| = |S|  \le \beta(\widehat{G}) \n(H) + \n(G) ,\\
& \left(|U(S)| - \beta(\widehat{G})\right)\n(H) \le \n(G) -|L(S)| \le \n(G) - \beta(\widehat{G}) = \alpha(\widehat{G}).
\end{align*}
Which implies that $$|U(S)| \le \beta(\widehat{G}) +\dfrac{\, \alpha(\widehat{G})}{\n(H)}.$$ 
In the other hand, we can proceed as follows:
\begin{align*}
|S| &  = |U(S)|\n(H) + |L(S)| \\
& = \left(|U(S)| + \beta(\widehat{G}) - \beta(\widehat{G}) \right)\n(H) + |L(S)| \\
& = \beta(\widehat{G}) \n(H)+ \left(|U(S)| - \beta(\widehat{G}) \right)\n(H) + |L(S)|\\
& = \beta(\widehat{G})\n(H) + \left(|U(S)| - \beta(\widehat{G})\right)\n(H) + |L(S) \setminus U(S)| + |L(S) \cap U(S)| \\
& = \beta(\widehat{G})\n(H) + \left(|U(S)| - \beta(\widehat{G})\right)\n(H) + \n(\widehat{G})-|U(S)| + |L(S) \cap U(S)| \\
& = \beta(\widehat{G})\n(H) + \left(|U(S)| - \beta(\widehat{G})\right)\n(H) + \alpha(\widehat{G}) +\beta(\widehat{G}) -|U(S)| + |L(S) \cap U(S)| \\
& = \beta(\widehat{G})\n(H) + \left(|U(S)| - \beta(\widehat{G})\right) \left(\n(H)-1\right) + \alpha(\widehat{G})  + |L(S) \cap U(S)|
\end{align*}
Applying the upper bound of Theorem~\ref{UpperBound1} again, 
$$\beta(\widehat{G})\n(H) + \left(|U(S)| - \beta(\widehat{G})\right) \left(\n(H)-1\right) + \alpha(\widehat{G})  + |L(S) \cap U(S)| = |S| \le \beta(\widehat{G}) \n(H) + \n(G),$$
$$\left(|U(S)| - \beta(\widehat{G})\right) \left(\n(H)-1\right) + \alpha(\widehat{G})  + |L(S) \cap U(S)| \le  \n(G),$$
$$\left(|U(S)| - \beta(\widehat{G})\right) \left(\n(H)-1\right) \le  \n(G) - \alpha(\widehat{G})  - |L(S) \cap U(S)| = \beta(\widehat{G})- |L(S) \cap U(S)| \le \beta(\widehat{G})- \beta^*(G).$$
Leading to 
$$ |U(S)| \le \beta(\widehat{G}) +  \dfrac{\beta(\widehat{G})-\beta^*(G)}{\n(H)-1} .$$
Therefore, the result follows.
\end{proof}

\begin{proposition} \label{u_betaset_condition}
Let $G$ and $H$ be two graphs, with $G$ connected, and let $S$ be a $\xi(G \odot H)$-set that satisfies the conditions of Lemma~\ref{structHindep}. If $ \n(H) > \min\{ \alpha(\widehat{G}), \beta(\widehat{G})-\beta^*(G)+1\} $, then $U(S)$ is a $\beta(\widehat{G})$-set.
\end{proposition}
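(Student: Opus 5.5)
Since $U(S)$ is already a vertex cover of $\widehat{G}$ by Lemma~\ref{vertxcovsets}, we have $|U(S)| \ge \beta(\widehat{G})$. It therefore suffices to show $|U(S)| \le \beta(\widehat{G})$, because a vertex cover of minimum cardinality is by definition a $\beta(\widehat{G})$-set. The natural tool is Lemma~\ref{Ubounds}, which bounds $|U(S)|$ from above by $\beta(\widehat{G})$ plus a fractional term. The plan is to show that this fractional term is strictly less than $1$ under the hypothesis; integrality of $|U(S)|$ then forces $|U(S)|\le \beta(\widehat{G})$.

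First we dispose of the requirement $\n(H)\ge 2$ in Lemma~\ref{Ubounds}. If $\n(H)\le 1$, the hypothesis $\n(H) > \min\{\alpha(\widehat{G}), \beta(\widehat{G})-\beta^*(G)+1\}$ forces the minimum to be $0$. Since $\beta(\widehat{G})-\beta^*(G)+1\ge 1$ by Remark~\ref{b_aster_bound}, this means $\alpha(\widehat{G})=0$. That is impossible for a graph of order $\n(G)\ge 2$, because any single vertex is an independent set, so $\alpha(\widehat{G})\ge 1$. Hence $\n(H)\ge 2$ and Lemma~\ref{Ubounds} applies.

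Next we split according to which term realizes the minimum in the hypothesis.
\begin{itemize}
\item If $\n(H) > \alpha(\widehat{G})$, then $\alpha(\widehat{G})/\n(H) < 1$.
\item If $\n(H) > \beta(\widehat{G})-\beta^*(G)+1$, then $\n(H)-1 > \beta(\widehat{G})-\beta^*(G)\ge 0$, so $(\beta(\widehat{G})-\beta^*(G))/(\n(H)-1) < 1$. Here $\n(H)-1\ge 1$ makes the division legitimate.
\end{itemize}
In either case the minimum appearing in Lemma~\ref{Ubounds} is strictly less than $1$, so $|U(S)| < \beta(\widehat{G})+1$. Since $|U(S)|$ is an integer, $|U(S)|\le \beta(\widehat{G})$, and combined with the lower bound we get equality.

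The only delicate point is the small case $\n(H)=1$ handled above, together with making sure the nonnegativity of $\beta(\widehat{G})-\beta^*(G)$ from Remark~\ref{b_aster_bound} is used. The rest is a direct consequence of Lemma~\ref{Ubounds}.
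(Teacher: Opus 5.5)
Your proof is correct and follows the paper's argument. The lower bound $|U(S)|\ge\beta(\widehat{G})$ comes from Lemma~\ref{vertxcovsets}, and the upper bound comes from Lemma~\ref{Ubounds} once the fractional term is shown to be strictly less than $1$. You are also slightly more careful than the paper in two places: the paper does not check the hypothesis $\n(H)\ge 2$ of Lemma~\ref{Ubounds} (you show it holds automatically here), and it leaves implicit that integrality of $|U(S)|$ is what turns $|U(S)|<\beta(\widehat{G})+1$ into $|U(S)|\le\beta(\widehat{G})$.
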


\begin{proof}
By Lemma~\ref{vertxcovsets} we have that $U(S)$ is a vertex cover of $\widehat{G}$. Thus $|U(S)| \ge \beta(\widehat{G})$. 

If $ \n(H) > \min\{ \alpha(\widehat{G}), \beta(\widehat{G})-\beta^*(G)+1\} $, then applying  Lemma \ref{Ubounds} we get:  
\begin{align*}
|U(S)| & \le \beta(\widehat{G}) + \min\left\lbrace\dfrac{\, \alpha(\widehat{G})}{\n(H)} , \dfrac{\beta(\widehat{G})-\beta^*(G)}{\n(H)-1} \,\right\rbrace \\
& \le \beta(\widehat{G}) + \min\left\lbrace\dfrac{\, \alpha(\widehat{G})}{\alpha(\widehat{G})+1} , \dfrac{\beta(\widehat{G})-\beta^*(G)}{\beta(\widehat{G})-\beta^*(G)+1} \,\right\rbrace\\
& \le \beta(\widehat{G}).
\end{align*}
Consequently, $|U(S)| = \beta(\widehat{G})$.
\end{proof}

For the remainder of this section, we define the auxiliary function
$$
R_G(H) = \xi(G \odot H) - \beta(\widehat{G}) \n(H),
$$
for any connected graph $G$ and any graph $H$.

\begin{lemma} \label{lemma_linear_Rg}
For any connected graph $G$ and any graph $H'$ such that $\n(H') = \min\{ \alpha(\widehat{G}), \beta(\widehat{G})-\beta^*(G)+1\}+1$, we have $R_G(H) \le R_G(H')$, for any graph $H$. In particular, if $\n(H) \ge \n(H')$, then $R_G(H)=R_G(H')$.
\end{lemma}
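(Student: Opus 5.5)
The plan is to turn $R_G(H)$ into a minimum over admissible pairs that depends only on $\n(H)$. Then I would show two things: this minimum is nondecreasing in $\n(H)$, and it is constant once $\n(H)\ge \n(H')$.

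\emph{Step 1 (reformulation).} Let $\mathcal{P}$ be the set of pairs $(U,L)$ with $U,L\in\mathcal{B}(\widehat{G})$ such that $(U,L)$ is a forward-equalized pair of $G$. I claim that
$$\xi(G\odot H)=\min_{(U,L)\in\mathcal{P}}\bigl(|U|\,\n(H)+|L|\bigr).$$
For "$\le$", Theorem~\ref{disteqGenralcond} shows that every pair in $\mathcal{P}$ yields a distance-equalizer set of exactly this size. For "$\ge$", take a $\xi(G\odot H)$-set $S$ as in Lemma~\ref{structHindep}. By Lemma~\ref{vertxcovsets} and Theorem~\ref{disteqGenralcond}, $(U(S),L(S))\in\mathcal{P}$, and $|S|=|U(S)|\n(H)+|L(S)|$. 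Subtracting $\beta(\widehat{G})\n(H)$ gives
$$R_G(H)=\min_{(U,L)\in\mathcal{P}}\Bigl(\bigl(|U|-\beta(\widehat{G})\bigr)\n(H)+|L|\Bigr).$$
This is a function of $n=\n(H)$ alone, which I write $r(n)$.

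\emph{Step 2 (monotonicity).} Every $U\in\mathcal{B}(\widehat{G})$ satisfies $|U|-\beta(\widehat{G})\ge 0$. So each expression inside the minimum is a nondecreasing function of $n$, and therefore $r$ is nondecreasing. In particular, $r(n)\le r(n_0)$ for all $n\le n_0$, where $n_0=\n(H')$.

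\emph{Step 3 (stabilization).} Let $c=\min\{|L| : (U,L)\in\mathcal{P},\ |U|=\beta(\widehat{G})\}$. This value is finite, because for any $\beta(\widehat{G})$-set $U$ the pair $(U,V(G))$ lies in $\mathcal{P}$.
\begin{itemize}
\item Upper bound: for every $n$, choosing a pair that attains $c$ in the formula of Step 1 gives $r(n)\le c$.
\item Lower bound for $n\ge n_0$: here $n>\min\{\alpha(\widehat{G}),\beta(\widehat{G})-\beta^*(G)+1\}$. By Proposition~\ref{u_betaset_condition}, the structured optimal set $S$ has $|U(S)|=\beta(\widehat{G})$. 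Hence $r(n)=|L(S)|\ge c$.
\end{itemize}
So $r(n)=c$ for all $n\ge n_0$. This gives the equality $R_G(H)=R_G(H')$ whenever $\n(H)\ge\n(H')$. Combined with Step 2, it gives $R_G(H)\le c=R_G(H')$ for every $H$.

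I expect the main obstacle to be the reformulation in Step 1. It must be valid uniformly in $H$, so the argument has to rely on the structured sets of Lemma~\ref{structHindep} together with both directions of Theorem~\ref{disteqGenralcond}. Once that is established, the rest is a short monotonicity argument plus the stabilization supplied by Proposition~\ref{u_betaset_condition}.
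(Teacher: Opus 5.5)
Your proof is correct and is essentially the paper's argument. The paper transfers the pairs $(U(S_H),L(S_H))$ and $(U(S_{H'}),L(S_{H'}))$ of structured optimal sets between $G\odot H$ and $G\odot H'$ via Theorem~\ref{disteqGenralcond}; your minimum formula over $\mathcal{P}$ packages exactly this once and for all, and your constant $c$ is the paper's $R_G(H')=|L(S_{H'})|=k(G)$, now described explicitly. Step~2 is correct but not needed, since the first bullet of Step~3 already gives $r(n)\le c$ for every $n$.
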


\begin{proof}

Let $S_H$ be a $\xi(G \odot H)$-set and $S_{H'}$ be a $\xi(G \odot H')$-set, both satisfying the conditions of Lemma~\ref{structHindep}.
Observe that
$$\xi(G \odot H) = |S_H| = |U(S_H)|\, \n(H) + |L(S_H)|,$$
and thus,
\begin{equation}\label{eq:rgh}
R_G(H) =  \left( |U(S_H)| - \beta(\widehat{G}) \right) \n(H) + |L(S_H)|. 
\end{equation}

Additionally, since $\n(H') > \min\{ \alpha(\widehat{G}), \beta(\widehat{G})-\beta^*(G)+1\}$, applying Proposition~\ref{u_betaset_condition} to the graph $H'$, it follows that \(|U(S_{H'})|=\beta(\widehat{G})\). Consequently,
$$\xi(G \odot H') = |S_{H'}| = \beta(\widehat{G})\, \n(H') + |L(S_{H'})|,$$
and hence,
\begin{equation}\label{eq:rgh'}
R_G(H') = |L(S_{H'})|. 
\end{equation}

Now, we consider the sets 
$$W_{H} = \left( \bigcup_{v_i \in U(S_H)} V(H_i') \right) \cup L(S_H), $$ and
$$W_{H'} = \left( \bigcup_{v_i \in U(S_{H'})} V(H_i) \right) \cup L(S_{H'}), $$
Applying Lemma~\ref{vertxcovsets} and Theorem~\ref{disteqGenralcond} to each of the previous sets, we conclude that $W_{H}$ is a distance-equalizer set of $G \odot H'$, and $W_{H'}$ is a distance-equalizer set of $G \odot H$. Thus, 
\begin{align}
 |U(S_H)|\, \n(H) + |L(S_H)| = |S_{H}| &\le |W_{H'}|= \beta(\widehat{G})\, \n(H) + |L(S_{H'})|,\label{W_H}\\
 \beta(\widehat{G})\, \n(H') + |L(S_{H'})|= |S_{H'}| &\le |W_{H}|= |U(S_H)|\, \n(H') +|L(S_H)|.\label{W_H'}
\end{align}
From \eqref{W_H} we obtain
\begin{equation}\label{R_H}
\left( |U(S_H)| - \beta(\widehat{G}) \right) \n(H) + |L(S_H)| \le |L(S_{H'})|,
\end{equation}
and from \eqref{W_H'} we have
\begin{equation}\label{R_H'}
|L(S_{H'})| \le \left( |U(S_H)| -  \beta(\widehat{G}) \right) \, \n(H') +|L(S_H)|. 
\end{equation}
Combining equations \eqref{eq:rgh} and  \eqref{eq:rgh'}, with inequalities \eqref{R_H} and \eqref{R_H'}, it follows that
\begin{equation}
R_G(H) \le R_G(H') \le \left( |U(S_H)| -  \beta(\widehat{G}) \right) \, \n(H') +|L(S_H)|.\label{ineq:RhleRh'}
\end{equation}

Therefore, $R_G(H) \le R_G(H')$ for any graph $H$. In particular, if $\n(H) \ge \n(H') = \min\{ \alpha(\widehat{G}), \beta(\widehat{G})-\beta^*(G)+1\}+1$, then by applying Proposition~\ref{u_betaset_condition} to the graph $H$, we get \(|U(S_{H})|=\beta(\widehat{G})\). Thus, according to equation \eqref{eq:rgh}, $R_G(H)=|L(S_H)|$. Therefore, from \eqref{ineq:RhleRh'}, we obtain
$$ R_G(H) = R_G(H'), $$
for every graph $H$ such that $ \n(H) >  \min\{ \alpha(\widehat{G}), \beta(\widehat{G})-\beta^*(G)+1\}$.
\end{proof}

\begin{theorem} \label{linearity_nh}

For any connected graph $G$ there exists a number $k(G) \in \mathbb{N}$, with $\alpha(\widehat{G}) + \beta^{*}(\widehat{G}) \le k(G) \le \n(G)$, such that
$$\xi(G \odot H) = \beta(\widehat{G})\, \n(H) + k(G),$$ for any graph $H$ satisfying $\n(H) > \min\{ \alpha(\widehat{G}), \beta(\widehat{G})-\beta^*(G)+1\},$
and $$\xi(G \odot H) \le \beta(\widehat{G})\, \n(H) + k(G),$$ for any graph $H$ satisfying $\n(H) \le \min\{ \alpha(\widehat{G}), \beta(\widehat{G})-\beta^*(G)+1\}.$

\end{theorem}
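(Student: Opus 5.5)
The plan is to define $k(G)$ directly through the auxiliary function $R_G$ and Lemma~\ref{lemma_linear_Rg}. Set $m(G)=\min\{ \alpha(\widehat{G}), \beta(\widehat{G})-\beta^*(G)+1\}$ and fix any graph $H'$ with $\n(H')=m(G)+1$, for instance $H'=N_{m(G)+1}$. Define $k(G)=R_G(H')=\xi(G\odot H')-\beta(\widehat{G})\,\n(H')$. This is an integer, and by the Proposition on orders it does not depend on the choice of $H'$, since $\xi(G\odot H')$ depends on $H'$ only through $\n(H')$.

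For the two displayed claims, Lemma~\ref{lemma_linear_Rg} gives $R_G(H)\le R_G(H')=k(G)$ for every graph $H$. This is exactly $\xi(G\odot H)\le \beta(\widehat{G})\n(H)+k(G)$, which covers the case $\n(H)\le m(G)$. The same lemma gives equality $R_G(H)=k(G)$ whenever $\n(H)\ge \n(H')$, that is, whenever $\n(H)>m(G)$, and this yields the linear formula.

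It remains to bound $k(G)$.

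\textbf{Upper bound.} Theorem~\ref{UpperBound1} applied to $H'$ gives $\xi(G\odot H')\le \beta(\widehat{G})\n(H')+\n(G)$, so $k(G)\le \n(G)$.

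\textbf{Lower bound.} Theorem~\ref{improvedlbound} applied to $H'$ gives $k(G)=R_G(H')\ge \alpha(\widehat{G})+\beta^*(G)$. I read the $\beta^*(\widehat{G})$ in the statement as the parameter $\beta^*(G)$ of Definition~\ref{intersectminEq}, which is what the preceding results control. If the literal $\beta^*(\widehat{G})$ is intended, I expect no proof from these tools, since nothing earlier relates forward-equalized pairs of $\widehat{G}$ to $\xi(G\odot H)$.

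A second route to the lower bound avoids Theorem~\ref{improvedlbound}. By Proposition~\ref{u_betaset_condition}, a $\xi(G\odot H')$-set $S$ as in Lemma~\ref{structHindep} has $|U(S)|=\beta(\widehat{G})$, so $k(G)=|L(S)|$. Since $L(S)\supseteq V(G)\setminus U(S)$, we get $|L(S)|=\alpha(\widehat{G})+|U(S)\cap L(S)|\ge \alpha(\widehat{G})+\beta^*(G)$.

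\textbf{Main obstacle.} The only delicate points are bookkeeping. First, $k(G)\in\mathbb{N}$ needs $R_G(H')\ge 0$, which follows from $k(G)\ge \alpha(\widehat{G})+\beta^*(G)\ge 0$. Second, the threshold must be stated consistently as $\n(H)>m(G)$, which is the same as $\n(H)\ge \n(H')$. Third, $\n(H')\ge 2$ is needed when Lemma~\ref{Ubounds} is invoked through Proposition~\ref{u_betaset_condition}. This can fail only when $m(G)=0$, so that $\n(H')=1$. In that case I would check the claims directly: with $\alpha(\widehat{G})=0$ the graph $\widehat{G}$ is complete, so $G=K_2$ and the bipartite formula handles it. Otherwise $\beta(\widehat{G})-\beta^*(G)+1=0$ is impossible by Remark~\ref{b_aster_bound}.
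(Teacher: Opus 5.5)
Your proposal is correct and follows the paper's proof: you set $k(G)=R_G(H')$ with $\n(H')=\min\{\alpha(\widehat{G}),\beta(\widehat{G})-\beta^*(G)+1\}+1$, bound it using Theorems~\ref{UpperBound1} and~\ref{improvedlbound}, and obtain both claims from Lemma~\ref{lemma_linear_Rg}; you are also right that $\beta^*(\widehat{G})$ in the statement should read $\beta^*(G)$, which is what the paper's argument actually yields. One harmless slip in your bookkeeping: the case $m(G)=0$ never arises, because $\alpha(\widehat{G})\ge 1$ for any graph with at least one vertex (a complete $\widehat{G}$ has $\alpha(\widehat{G})=1$, not $0$), so $\n(H')\ge 2$ always holds and no separate check via $K_2$ is needed.
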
 

\begin{proof}
Let $H'$ a graph such that $\n(H') = \min\{ \alpha(\widehat{G}), \beta(\widehat{G})-\beta^*(G)+1\} +1$, and $k(G)=R_G(H')$. Applying the upper and lower bounds for $\xi(G \odot H')$, stated in Theorem~\ref{UpperBound1} and Theorem~\ref{improvedlbound}, we get 
$$\alpha(\widehat{G}) + \beta^*(G) \le k(G) \le \n(G).$$ 
Now, since $\xi(G \odot H)= \beta(\widehat{G})\, \n(H) + R_G(H)$, the result follows directly from Lemma~\ref{lemma_linear_Rg}.
\end{proof}

The example provided in Figure \ref{fig:FigFish} can be used to clarify that not all values of $\xi(G \odot H)$ lie on the same straight line. 
Let $G$ be the graph of order 6 shown in Figure \ref{fig:FigFish}, and $H$ any graph. For any $\xi(G \odot H)$-set $S$, which satisfies the conditions of Lemma~\ref{structHindep}, we have that $$|U(S)| \le \beta(\widehat{G}) + \dfrac{\beta(\widehat{G})-\beta^*(G)}{\n(H)-1} \le 2 \quad \text{for } \n(H) \ge 2,$$ as a consequence of Lemma \ref{Ubounds}.

Let us denote $U_1 = \{ 4 \}$, $L_1 = V(G)$. Observe that, in this case, $U_1$ is the only $\beta(\widehat{G})$-set. Thus, we have $\beta(\widehat{G})=1$ and $\alpha(\widehat{G})=5$. Moreover, $(U_1, L_1)$ is a forward-equalized pair of $G$. Note that $(U_1, L_1 \setminus \{4\})$ is not a forward-equalized pair of $G$, since $d_G(w,4) \ne d_G(w,3)+1$ for every $w \in V(G)$. Hence, by Theorem~\ref{disteqGenralcond}, the unique possible $\xi(G \odot H)$-set $S$ with $|U(S)| = 1$ is
\[
S_1(H) = \left( \bigcup_{v_i \in U_1} V(H_i) \right) \cup L_1 .
\]
 
For the case when $|U(S)| = 2$, the sets $U_2 = \{ 3, 4 \}$, $L_2 = \{ 1, 2, 5, 6 \}$ form a forward-equalized pair of $G$.  
Now, since $|L(S)| \ge \n(G) - |U(S)| = 4$, the set
\[
S_2(H) = \left( \bigcup_{v_i \in U_2} V(H_i) \right) \cup L_2
\]
has minimum cardinality among all distance-equalizer sets $S$ of $G \odot H$ such that $|U(S)| = 2$ and $\n(H) \ge 2$.
Consequently, $$\xi(G \odot H) =\min\{|S_1(H)|, |S_2(H)|\}= \min\{\n(H)+6, 2\n(H)+4\},\quad \text{for } \n(H) \ge 2.$$
Now, observe that for $\n(H)=1$ we have $|S_2(H)|=6=\n(G)<|S_1(H)|$, thus the lower bound stated in Lemma \ref{lemmaLowerBound} is attained by $|S_2(H)|$. Nevertheless, for $\n(H) \ge 2$, we have $|S_1(H)| \le |S_2(H)|$. Therefore,

\begin{equation}\label{formulaFish}
\xi(G \odot H) = \left\lbrace
\begin{array}{cl}
  6, & \text{if } \n(H) = 1
\\ \n(H) + 6, & \text{if }  \n(H) \ge 2
\end{array} \right.
\end{equation}

\begin{figure}[h!]
  \centering
  \begin{subfigure}{0.45\textwidth}
    \centering
    \begin{tikzpicture}[transform shape, inner sep = 1mm]
	\node[ draw, circle, fill=black, label=above:3] (v3) at (0,0) {};
	\node[ draw, circle, fill=black, label=above:6] (v6) at (2,2) {};
	\node[draw, circle, fill=black, label=below:1] (v1) at (-2,-2) {};
	\node[ draw, circle, fill=black, label=above:4] (v4) at (-4,0) {};
	\node[ draw, circle, fill=black, label=below:5] (v5) at (2,-2) {};
	\node[ draw, circle, fill=black, label=above:2] (v2) at (-2,2) {};
	\draw (v1)--(v3)--(v2)--(v4)--(v1)--(v2);
	\draw (v3)--(v5)--(v6)--(v3);

    \end{tikzpicture}
    
  \end{subfigure}
  \hfill
  \begin{subfigure}{0.45\textwidth}
    \centering
    \begin{tikzpicture}[transform shape, inner sep = 1mm]
	\node[ draw, circle, fill=gray, label=left:4] (v4) at (-3,0) {};
	\node[ draw, circle, fill=gray, label=right:1] (v1) at (0.5,2) {};
	\node[draw, circle, fill=gray, label=right:2] (v2) at (1,1) {};
	\node[ draw, circle, fill=gray, label=right:3] (v3) at (1.5,0) {};
	\node[ draw, circle, fill=gray, label=right:5] (v5) at (1,-1) {};
	\node[ draw, circle, fill=gray, label=right:6] (v6) at (0.5,-2) {};
	\draw (v4) -- (v6);
	\draw (v4) -- (v5);
    \end{tikzpicture}
   
  \end{subfigure}
\caption{A Graph $G$ (to the left), and its corresponding empty bisector graph $\widehat{G}$ (to the right), such that $\xi(G \odot H) = \beta(\widehat{G})\n(H)+ \n(G)$ for every graph $H$ with $\n(H) \ge 2$, but $\xi(G \odot H) < \beta(\widehat{G})\n(H) + \n(G)$ whenever $\n(H) = 1$.}
  \label{fig:FigFish}
\end{figure}
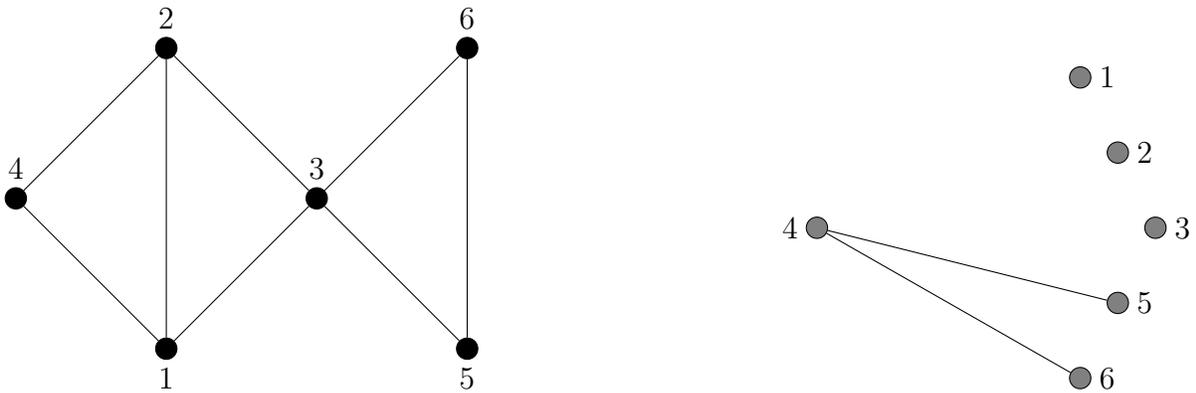

In Table \ref{tableFish}, we present the computed values of the general lower and upper bounds established in Theorems~\ref{improvedlbound} and \ref{UpperBound1}, together with the cardinalities of $S_1$ and $S_2$, for $\n(H) \le 3$. Observe that, in this case, the lower bound $\n(H)+5$ is attained for $\n(H)=1$, while the upper bound $\n(H)+6$ is attained for $\n(H) \ge 2$. Finally, observe that equation~\eqref{formulaFish} shows that the values of $\xi(G \odot H)$ lie on the same straight line, except in the case $\n(H)=1$.
 
\begin{table}[h]
\centering   
\begin{tabular}{|c|c|c|c|c|c|} 
\hline 
$\n(H)$ & $\n(H)+5$ & $|S_1|$ & $|S_2|$ & $\n(H)+6$ & $\xi(G \odot H)$\\ 
\hline 
1 & 6 & 7 & 6 & 7 & 6\\ 
\hline 
2 & 7 & 8 & 8 & 8 & 8\\ 
\hline
3 & 8 & 9 & 10 & 9 & 9\\ 
\hline
\end{tabular}
\caption{Calculation of upper and lower bounds for $\xi(G \odot H)$, where $G$ is the graph shown in Figure~\ref{fig:FigFish}. } \label{tableFish}
\end{table}

Another interesting example, depicted in Figure~\ref{fig:NonLinearity}, illustrates that more than one value of \( \xi(G \odot H) \) may fall below the line of slope \( \beta(\widehat{G}) \) given in Theorem~\ref{linearity_nh} when
$ \n(H) < \min\{ \alpha(\widehat{G}),\, \beta(\widehat{G}) - \beta^{*}(G) + 1 \} $.

Let $G$ be a graph of order $8$ and let $\widehat{G}$ be its corresponding empty bisector graph,
both shown in Figure \ref{fig:NonLinearity}, and let $H$ be an arbitrary graph. Applying Lemma~\ref{Ubounds}, for any $\xi(G \odot H)$-set $S$, which satisfies the conditions of Lemma~\ref{structHindep}, we have  
\begin{equation} \label{eq:nonlin}
|U(S)|\; \le \; 3 \;+\; \min\!\left\{\frac{5}{\n(H)},\;\frac{3}{\,\n(H)-1\,}\right\} \; \le \; 5 \quad \text{for } \n(H) \ge 2.
\end{equation} 

Analogously to the previous example we will analyse all values of $|S|$ for each possible case of $|U(S)|$ and for any value of $\n(H)$. It is worth keeping in mind that, by Lemma \ref{vertxcovsets}
and Theorem \ref{disteqGenralcond},
$U(S), L(S) \in \mathcal{B}(\widehat{G})$, $U(S) \cup L(S) = V(G)$,
and $(U(S), L(S))$ is a forward-equalized pair of $G$.

If $ \n(H) > \min\{ \alpha(\widehat{G}),\, \beta(\widehat{G}) - \beta^{*}(G) + 1 \} = 4 $, then by Proposition~\ref{u_betaset_condition}, $|U(S)|=\beta(\widehat{G})=3$. Observe that, in the graph $G$ shown in Figure~\ref{fig:NonLinearity} the only possible $\beta(\widehat{G})$-sets are  $U_1 = \{ 1,2,3 \}$ and $U_2 = \{ 2,3,4 \}$. Thus, $U(S)= U_1$ or $U(S)= U_2$. 
We initially assume that $U(S) = U_1$. Since for every vertex $u \in U_1$, every vertex $v \in V(G)\setminus U_1$, and every vertex $w \in V(G)$ it holds that $d_G(w,u) \neq d_G(w,v)+1$, it is necessary that $U_1 \subseteq L(S)$ for $(U_1, L(S))$ to be a forward-equalized pair.
Hence, since $V(G) \setminus U_1 \subseteq L(S)$, the only possible set $L(S)$ such that $(U_1, L(S))$ forms a forward-equalized pair of $G$ is
$L(S)=V(G)$. This leads to $|S| = \beta(\widehat{G})\n(H)+\n(G)=3\n(H)+8$. Now suppose that $U(S) = U_2$.
Since for every vertex $u \in \{2,3\}$ and every vertex $w \in V(G)$ there exists a vertex $v \in V(G) \setminus U_2$ such that $d_G(w,u) \neq d_G(w,v)+1$, it follows that $\{2,3\} \subseteq L(S)$.
On the other hand, for any vertex $v \in V(G) \setminus U_2$, there exists $w \in V(G)$ such that
$d_G(w,4) = d_G(w,v)+1$.
Taking into account the minimality assumption on $|S|$, this implies that
$L(S) = V(G) \setminus \{4\}$.
Therefore, in the case $U(S) = U_2$, we obtain the minimum possible value of
$|S|$ when $U(S)$ is a $\beta(\widehat{G})$-set. Consequently,
\begin{equation}\label{U_S_3}
\xi(G \odot H) = \beta(\widehat{G})\,\n(H) + \n(G) - 1 = 3\n(H) + 7,
\end{equation}
for $\n(H) > 4$.

Note that, by \eqref{eq:nonlin}, $|U(S)| \in \{3,4\}$ whenever $\n(H) \in \{3,4\}$.
Moreover, since $\big(\{3,4,5,6\}, \{1,2,7,8\}\big)$ is a forward-equalized pair of $G$, there exists a distance equalizer set of minimum cardinality $4\n(H)+4$
among all distance equalizer sets $S'$ of $G \odot H$ with $|U(S')| = 4$. Comparing this cardinality with the minimum cardinality obtained in
\eqref{U_S_3} for $|U(S)|=3$, we obtain
\[
\min_{\n(H) \in \{3,4\}} \left\{ 4\n(H)+4,\; 3\n(H)+7 \right\} = 3\n(H)+7.
\]
Together with equation~\eqref{U_S_3}, this yields $\xi(G \odot H) = 3\n(H)+7$ for $\n(H)\ge 3$.

If $\n(H)=2$, then \eqref{eq:nonlin} implies that $|U(S)| \in \{3,4,5\}$. Furthermore, the existence of the forward-equalized pair
$\big(\{4,5,6,7,8\}, \{1,2,3\}\big)$ in $G$ guarantees the existence of
a distance equalizer set of minimum cardinality $5\n(H)+3$
among all distance equalizer sets $S'$ of $G \odot H$ with $|U(S')| = 5$. 
Therefore, taking into account all the arguments presented above, we deduce that
if $\n(H) = 2$, then
\[
\xi(G \odot H)
= \min_{\n(H) = 2} \left\{ 5\n(H)+3,\; 4\n(H)+4,\; 3\n(H)+7 \right\}=4\n(H)+4
= 12.
\]
Finally, since $(U_3,L_3)=\big(\{3,4,5,6\}, \{1,2,7,8\}\big)$ is a disjoint forward-equalized pair of $G$ with $U_3,L_3\in\mathcal{B}(\widehat{G})$, we deduce that $\beta^*(G)=0$.
Therefore, when $\n(H)=1$, by applying Theorem~\ref{totalEq}, we obtain
$\xi(G \odot H) = \n(G) = 8$.

The values obtained for the equidistant dimension of $G \odot H$, where $G$ is
the graph depicted in Figure~\ref{fig:NonLinearity}, can be summarized by the following formula:

$$\xi(G \odot H) = \left\lbrace
\begin{array}{cl}
  4 \n(H)+4, & \text{if } \n(H) \le 2
\\ 3\n(H)+7, & \text{if }  \n(H) > 2
\end{array} \right.$$

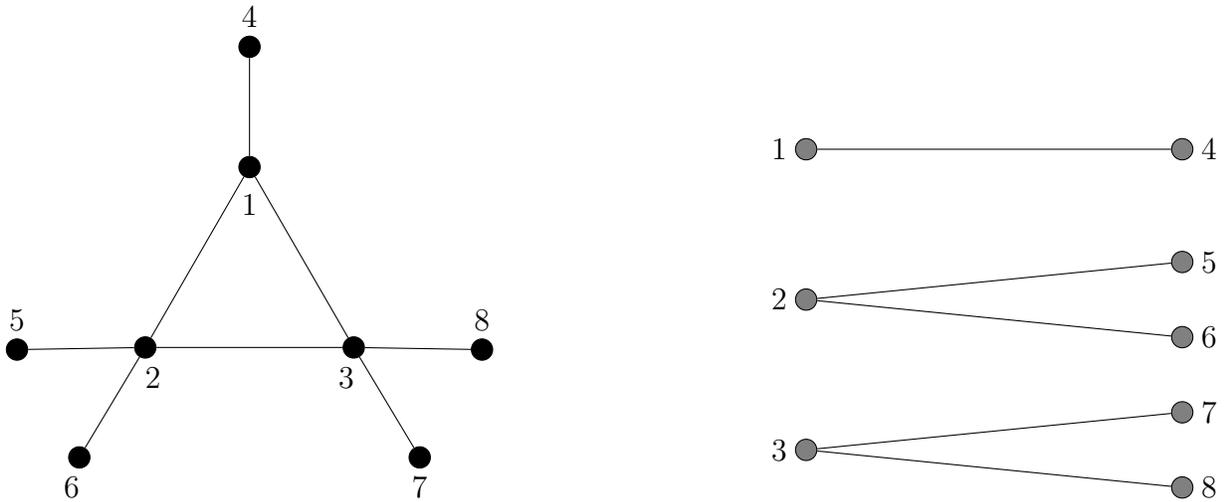
\begin{figure}[h!]
  \centering
  \begin{subfigure}{0.45\textwidth}
    \centering
	\begin{tikzpicture}[transform shape, inner sep = 1mm]
\def\radius{1.6} 
\foreach \ind in {1,2,3}
{
\pgfmathsetmacro{\a}{-30+(360*\ind)/3};
\pgfmathparse{360/3*\ind};
\node [draw=black, shape=circle, fill=black] (v\ind) at (\a:\radius cm) {};
}

\node [draw=black, shape=circle, fill=black] (v4) at (90:3.2 cm) {};

\draw[black] (v1)--(v2)--(v3)--(v1)--(v4);

\def\radius{3.2} 
\foreach \ind in {5,6}
{
\pgfmathsetmacro{\a}{-15+(360*\ind)/3};
\pgfmathparse{360/3*\ind};
\node [draw=black, shape=circle, fill=black] (v\ind) at (\a:\radius cm) {};
}

\def\radius{3.2} 
\foreach \ind in {8,9}
{
\pgfmathsetmacro{\a}{-45+(360*\ind)/3};
\pgfmathparse{360/3*\ind};
\node [draw=black, shape=circle, fill=black] (v\ind) at (\a:\radius cm) {};
}

\draw[black] (v5)--(v2)--(v8);
\draw[black] (v6)--(v3)--(v9);

\node at ([xshift=0 cm, yshift=-.5 cm]v1) {1};
\node at ([xshift=0.1 cm, yshift=-.4 cm]v2) {2};
\node  at ([xshift=-0.1 cm, yshift=-.4 cm]v3) {3};
\node  at ([yshift=0.4 cm]v4) {4};
\node  at ([yshift=.4 cm]v8) {5};
\node  at ([xshift=-.1 cm, yshift=-.4 cm]v5) {6};
\node  at ([yshift=-.4 cm]v9) {7};
\node at ([yshift=.4 cm]v6) {8};

\end{tikzpicture}

  \end{subfigure}
  \hfill
  \begin{subfigure}{0.45\textwidth}
    \centering
    \begin{tikzpicture}[transform shape, inner sep = 1mm]

\node[ draw, circle, fill=gray, label=left:1] (v1) at (-2.5,1) {};
\node[ draw, circle, fill=gray, label=left:2] (v2) at (-2.5,-1) {};
\node[ draw, circle, fill=gray, label=left:3] (v3) at (-2.5,-3) {};
\node[ draw, circle, fill=gray, label=right:4] (v4) at (2.5,1) {};
\node[ draw, circle, fill=gray, label=right:5] (v5) at (2.5,-0.5) {};
\node[ draw, circle, fill=gray, label=right:6] (v6) at (2.5,-1.5) {};
\node[ draw, circle, fill=gray, label=right:7] (v7) at (2.5,-2.5) {};
\node[ draw, circle, fill=gray, label=right:8] (v8) at (2.5,-3.5) {};

\draw (v1)--(v4);
\draw (v5)-- (v2)--(v6);
\draw (v7)-- (v3)--(v8);

\end{tikzpicture}

  \end{subfigure}
 \caption{Graph $G$ (to the left), and its corresponding empty bisector graph $\widehat{G}$ (to the right), with  $\beta(\widehat{G}) = 3$, $ \alpha(\widehat{G})=5$ and $\beta^*(G) = 0$.}\label{fig:NonLinearity}
\end{figure}

Figure~\ref{graphic:noLinear} shows that, for $\n(H)=1$ and $\n(H)=2$, the equidistant
dimension of $G \odot H$ falls below the line corresponding to the values of $\xi(G \odot H)$ for $\n(H) > 2$. Note that, in this case, the lower bound $3\n(H)+5$ given in Theorem~\ref{improvedlbound} is attained only for $\n(H)=1$, whereas the upper bound $3\n(H)+8$ from Theorem~\ref{UpperBound1} is never attained for any value of $\n(H)$.

\begin{figure}[htbp]
\centering
\begin{tikzpicture}
\begin{axis}[
    axis lines=middle,
    xmin=0.5, xmax=7,
    ymin=4, ymax=25,
    xlabel style={xshift=35pt, yshift=-10pt},
	ylabel style={yshift=5pt},
    xlabel={$\n(H)$},
    ylabel={$\xi(G \odot H)$},
    width=12cm,
    height=8cm,
    enlarge x limits=0.1,
    enlarge y limits=0.1,
    tick align=outside,
    ticklabel style={font=\small},
    label style={font=\small},
    grid=none,
    legend style={
        at={(1.1,0.8)},  
        anchor=north,
        draw=none,
        fill=none,
        font=\small,
        legend columns=0,
    },
    legend cell align=left,
    legend image code/.code={}, 
    legend entries={
        \shortstack[l]{%
	    $\displaystyle \bullet \, \xi(G \odot H) =
	    \left\lbrace
	    \begin{array}{cl}
	    4\n(H)+4, & \text{if } \n(H) \le 2
	    \\ 3\n(H) + 7, & \text{if }  \n(H) > 2
	    \end{array} \right.$
        }
    },
]

\addplot[
    only marks,
    mark=*,
    black,
    nodes near coords,            
    every node near coord/.append style={
        font=\footnotesize,
        anchor=north,
        yshift=-2pt
    },
]
coordinates {
    (1,8) (2,12) (3,16) (4,19) (5,22) (6,25)
};

\addplot[
    black,
    dashed,
    domain=0:7,
    samples=100,
]
{3*x + 7};

\end{axis}
\end{tikzpicture}
\caption{The graphical representation of $\xi(G \odot H)$, where $G$ is the graph shown in Figure~\ref{fig:NonLinearity}.}
\label{graphic:noLinear}
\end{figure}
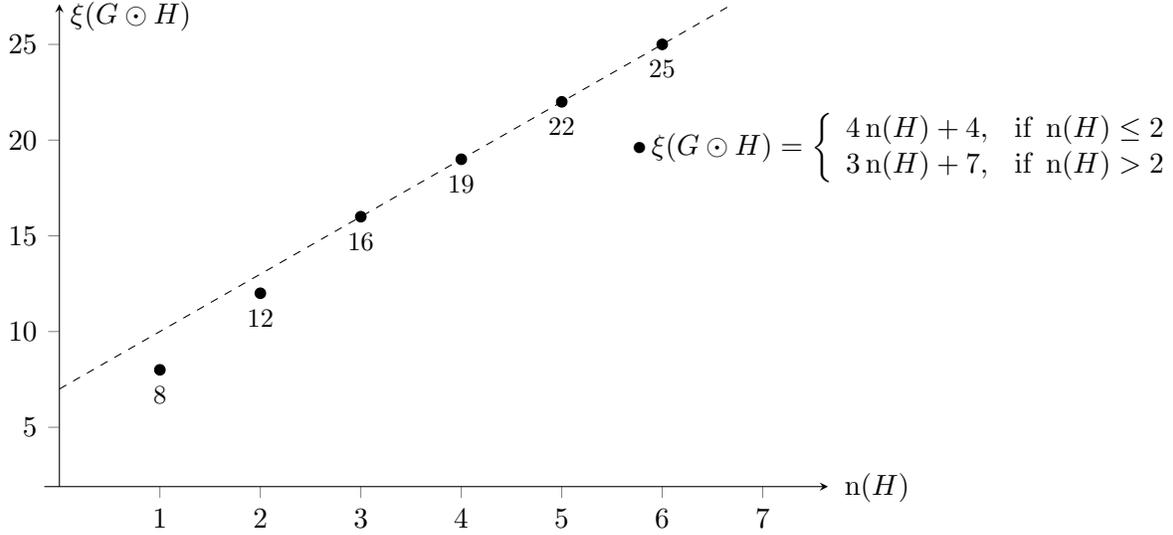

\section{Particular cases and minor results} \label{Sec:MinorResults}

In this section we establish some immediate relationships between the properties of $\widehat{G}$ and the properties of $G$. We also analyse several cases in which specific conditions are imposed on the base graph $G$.

\begin{proposition} \label{RelationsGprime}
For every connected graph $G$,

\begin{enumerate}[{\rm(i)}]
 \item $\xi(G) \geq \beta(\widehat{G})$;
  \item $\Delta(G) \leq \alpha(\widehat{G})$;
 \item $\omega(G) \leq \alpha(\widehat{G})$, for $\n(G) \ne 2$.

\end{enumerate}

\end{proposition}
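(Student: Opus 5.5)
For (i), I will show that the complement $V(G)\setminus S$ of any $\xi(G)$-set $S$ is an independent set of $\widehat{G}$, so that $S$ is a vertex cover of $\widehat{G}$. Indeed, if $u,v\notin S$ are distinct, the distance-equalizer property gives some $w\in S$ with $d_G(w,u)=d_G(w,v)$. Then $w\in B_G(u\mid v)$, so this bisector is nonempty and $\{u,v\}\notin E(\widehat{G})$. Hence $S\in\mathcal{B}(\widehat{G})$, and $\xi(G)=|S|\ge\beta(\widehat{G})$.

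For (ii), let $v$ be a vertex of maximum degree. I claim $N_G(v)$ is independent in $\widehat{G}$: for any two distinct neighbours $u,u'$ of $v$ we have $d_G(v,u)=1=d_G(v,u')$, so $v\in B_G(u\mid u')$. Thus $\alpha(\widehat{G})\ge |N_G(v)|=\Delta(G)$.

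For (iii), let $K$ be a maximum clique of $G$. If $\omega(G)\ge 3$, then for any two distinct $u,u'\in K$ there is a third vertex $w\in K$ adjacent to both, so $w\in B_G(u\mid u')$. Hence $K$ is independent in $\widehat{G}$ and $\alpha(\widehat{G})\ge\omega(G)$. If $\omega(G)\le 2$, then $\omega(G)\le\Delta(G)\le\alpha(\widehat{G})$ by (ii), since $G$ is connected with $\n(G)\ge 2$ and so $\Delta(G)\ge 1$. The case $\omega(G)=2$ works because $\Delta(G)\ge 2$ whenever $\n(G)\ge 3$ and $G$ is connected. The case $\omega(G)=1$ occurs only for $\n(G)=1$, where $\alpha(\widehat{G})=1$.

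The exclusion $\n(G)\neq 2$ is exactly the case $G=K_2$: there $\omega=2$ but $\widehat{K_2}=K_2$ has $\alpha=1$. The only delicate point in the whole argument is this small-case bookkeeping in (iii); the rest is direct from the definitions.
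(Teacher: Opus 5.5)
Your arguments for (i) and (ii) are the same as the paper's: the complement of a $\xi(G)$-set is independent in $\widehat{G}$, and the neighbourhood of a maximum-degree vertex is independent in $\widehat{G}$ because that vertex lies in the bisector of any two of its neighbours.

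For (iii) your route differs from the paper's, and yours is the more careful one. The paper simply asserts that for $\n(G)\neq 2$ every clique of $G$ is independent in $\widehat{G}$. This is false for cliques of order $2$. In a connected bipartite graph, adjacent vertices $u,v$ have an empty bisector, since $d_G(w,u)$ and $d_G(w,v)$ have different parity for every $w$. So when $\n(G)\ge 3$ the maximum cliques of such a graph are edges of $\widehat{G}$. For example, both edges of $P_3$ are edges of $\widehat{P_3}\cong K_{1,2}$. The inequality still holds there, but not for the reason the paper gives. Your case split closes this gap:
\begin{itemize}
\item Cliques of order at least $3$ are independent in $\widehat{G}$, because a third clique vertex equalizes any pair.
\item The case $\omega(G)=2$ follows from (ii), using that a connected graph of order at least $3$ has $\Delta(G)\ge 2$.
\item The case $\omega(G)=1$ forces $G=K_1$.
\end{itemize}

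One sentence needs tidying: ``If $\omega(G)\le 2$, then $\omega(G)\le\Delta(G)\le\alpha(\widehat{G})$ \dots{} since $\Delta(G)\ge 1$''. Here $\Delta(G)\ge 1$ is not enough when $\omega(G)=2$, and $\omega(G)\le\Delta(G)$ fails for $K_1$. Your next two sentences handle both subcases correctly, so state those directly in place of the blanket claim.
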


\begin{proof}

We will analyse each case separately.

\begin{enumerate}[{\rm(i)}]

\item Let $S$ be a $\xi(G)$-set. Then for every $u,v \in V(G)\setminus S$, we have that $B_G(u \mid v) \neq \varnothing $. Hence, $V(G)\setminus S$ is an independent set in $\widehat{G}$, and $S$ is vertex cover of $\widehat{G}$. As a consequence $\xi(G) \geq \beta(\widehat{G})$.

\item Let $u \in V(G)$ a vertex with $\delta(u) = \Delta(G)$, then $N(u)$ is an independent set in $\widehat{G}$. Thus $\Delta(G) \leq \alpha(\widehat{G})$.

\item For $\n(G) \ne 2$ every clique in $G$ is an independent set in $\widehat{G}$. Thus, $\omega(G) \leq \alpha(\widehat{G})$.
\end{enumerate}
\end{proof}

It was proved in \cite{Gonzalez2022} that for any positive integers $r$ and $s$ with $s \geq r$, we have $\xi(K_{r,s}) = r$.  From this result and Theorem \ref{eqdimbipartG}, we deduce that the bound given in Proposition \ref{RelationsGprime}(i) is reached when $G \cong K_{r,s}$. Moreover, the equalities in Proposition \ref{RelationsGprime} (i), (ii) and (iii) are achieved for $ G \cong K_n \odot N_1 $. In order to illustrate this fact, we provide an explicit example in Figure \ref{fig:Fig3}, for the case $n=5$.

\begin{figure}[h!]
  \centering
  \begin{subfigure}{0.45\textwidth}
    \centering
	\begin{tikzpicture}[transform shape, inner sep = 1mm]
\def\radius{1.5} 
\foreach \ind in {0,...,4}
{
\pgfmathsetmacro{\a}{18+(360*\ind)/5};
\pgfmathparse{360/5*\ind};
\node [draw=black, shape=circle, fill=black] (v\ind) at (\a:\radius cm) {};
}

\foreach \ind in {0,...,4}
{
\pgfmathsetmacro{\j}{mod(\ind + 1, 5)};
\pgfmathparse{int(\j)};
\draw[black] (v\ind) -- (v\pgfmathresult);
\pgfmathsetmacro{\k}{mod(\ind + 2, 5)};
\pgfmathparse{int(\k)};
\draw[black] (v\ind) -- (v\pgfmathresult);
}

\def\radius{3} 
\foreach \ind in {5,...,9}
{
\pgfmathparse{18+360/5*\ind};
\node [draw=black, shape=circle, fill=black] (v\ind) at (\pgfmathresult:\radius cm) {};
}

\foreach \ind in {0,...,4}
{
\pgfmathparse{int(\ind+5)}
\draw[black] (v\ind) -- (v\pgfmathresult);
}

\node at ([xshift=.1 cm, yshift=-.4 cm]v0) {1};
\node at ([xshift=-.2 cm, yshift=.4 cm]v1) {2};
\node  at ([xshift=-.2 cm, yshift=-.4 cm]v2) {3};
\node  at ([yshift=-.4 cm]v3) {4};
\node  at ([xshift=-.1 cm, yshift=-.4 cm]v4) {5};
\node  at ([yshift=.4 cm]v5) {6};
\node at ([yshift=.4 cm]v6) {7};
\node  at ([yshift=-.4 cm]v7) {8};
\node  at ([xshift=-.1cm, yshift=-.4 cm]v8) {9};
\node  at ([yshift=-.4 cm]v9) {10};

\end{tikzpicture}

  \end{subfigure}
  \hfill
  \begin{subfigure}{0.45\textwidth}
    \centering
    \begin{tikzpicture}[transform shape, inner sep = 1mm]
    
\foreach \ind in {1,...,5}
{
\pgfmathparse{int(\ind + 5)};
\node[ draw, circle, fill=gray, label=left:\ind] (v\ind) at (-2.5,-1.5*\ind) {};
\node[ draw, circle, fill=gray, label=right:\pgfmathresult] (v\pgfmathresult) at (2.5,-1.5*\ind) {};
\draw (v\ind) -- (v\pgfmathresult);
 }
 
\end{tikzpicture}

  \end{subfigure}
\caption{Graph $G \cong K_5 \odot N_1$ (to the left), and its corresponding empty bisector graph $\widehat{G} \cong N_5 \odot N_1$ (to the right), with  $\beta(\widehat{G}) = \alpha(\widehat{G}) = \xi(G) = \Delta(G) = \omega(G) = 5$ and $\beta^*(G) = 0$.}
   \label{fig:Fig3}
\end{figure}
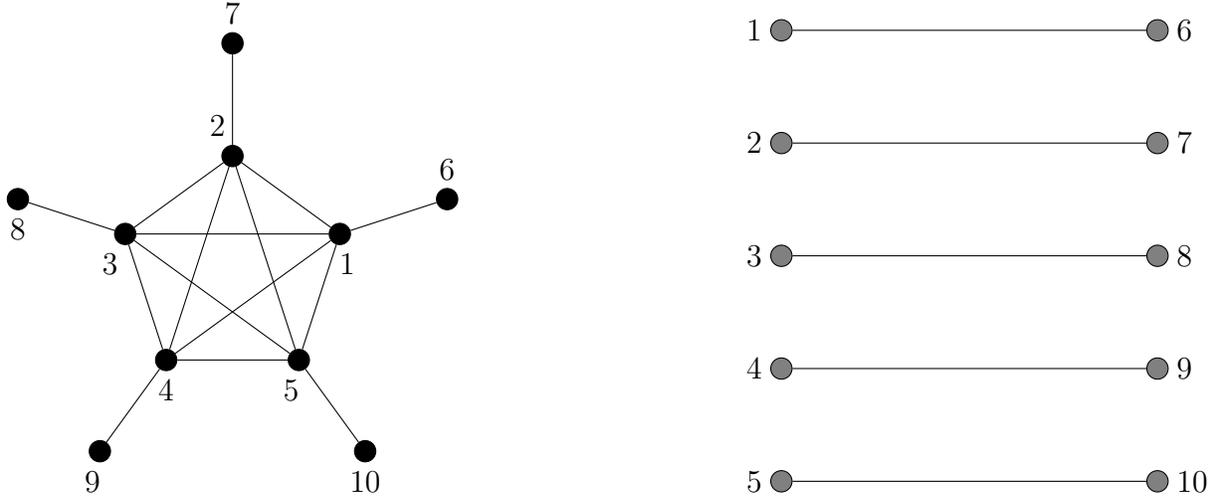

From Proposition \ref{RelationsGprime} (i) and Theorem~\ref{UpperBound1}, we can establish a relationship between $\xi (G \odot H)$ and $ \xi(G)$.

\begin{proposition}\label{xiupperbound}
For every connected graph $G$ and any graph $H$, $$\xi (G \odot H) \leq \xi(G)\n(H) + \n(G).$$
\end{proposition}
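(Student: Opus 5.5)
The plan is to chain two earlier results. By Theorem~\ref{UpperBound1}, $\xi(G\odot H)\le \beta(\widehat{G})\n(H)+\n(G)$ for every connected $G$ and every graph $H$. By Proposition~\ref{RelationsGprime}(i), $\beta(\widehat{G})\le \xi(G)$. Since $\n(H)\ge 1>0$, multiplying the second inequality by $\n(H)$ preserves it, which gives
\[
\xi(G\odot H)\le \beta(\widehat{G})\n(H)+\n(G)\le \xi(G)\n(H)+\n(G).
\]

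For completeness, I would also recall why $\beta(\widehat{G})\le\xi(G)$ holds. Take a $\xi(G)$-set $S$. Every pair of vertices outside $S$ has a vertex of $S$ in its bisector, so the bisector is nonempty. Hence $V(G)\setminus S$ is independent in $\widehat{G}$, and $S$ is a vertex cover of $\widehat{G}$.

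Alternatively, the bound can be proved by a direct construction that bypasses $\beta(\widehat{G})$. Let $U$ be a $\xi(G)$-set and $L=V(G)$, and set $S=\bigcup_{v_i\in U}V(H_i)\cup V(G)$. Then $U$ is a vertex cover of $\widehat{G}$ by the argument above, and so is $L$. Moreover $X\setminus Y=\varnothing$ for $(X,Y)=(U,L)$, so $(U,L)$ is trivially a forward-equalized pair. Theorem~\ref{disteqGenralcond} then shows that $S$ is a distance-equalizer set of $G\odot H$, of size $\xi(G)\n(H)+\n(G)$.

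There is no real obstacle here. The only point needing care is that $\xi(G)$-sets are vertex covers of $\widehat{G}$, which is exactly Proposition~\ref{RelationsGprime}(i). The bound is tight whenever $\xi(G)=\beta(\widehat{G})$ and Theorem~\ref{UpperBound1} is attained.
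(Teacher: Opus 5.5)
Your proposal is correct and matches the paper's argument exactly: the paper obtains this bound by chaining Theorem~\ref{UpperBound1} with Proposition~\ref{RelationsGprime}(i), as in your first paragraph. Your alternative direct construction is also valid, but it is just the proof of Theorem~\ref{UpperBound1} run with a $\xi(G)$-set in place of a $\beta(\widehat{G})$-set, so it is the same route.
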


The upper bound given in Proposition~\ref{xiupperbound} is tight.
The equality
$$
\xi (G \odot H) = \xi(G)\,\n(H) + \n(G)
$$
is attained, in particular, when $G \cong K_n \odot N_m$, with $n \ge 3$, $m \ge 2$ and $\n(H) > n+1$.
For this case, the associated empty bisector graph $\widehat{G}$ is isomorphic to the bipartite graph $N_n \odot N_m$. Since 
$V(N_n)$ is the unique $\beta(N_n \odot N_m)$-set, and by Proposition~\ref{familieseqidim}~(i), $\xi(K_n \odot N_m)= n$, we get
$$
\xi(G) = \beta(\widehat{G}).
$$
Note that, $\big(V(K_n \odot N_m) \setminus V(K_n) , V(K_n) \big)$ constitute a disjoint forward-equalized pair of $G$, thus $\beta^*(G)=0$. 
Let $S$ be a $\xi(G \odot H)$-set satisfying the structural conditions of Lemma~\ref{structHindep}. If
$\n(H) > n+1 \ge \beta(\widehat{G}) - \beta^*(G) + 1$,
then by Proposition~\ref{u_betaset_condition}, the set $U(S)$ is necessarily a $\beta(\widehat{G})$-set. Recalling that $V(G)=V(\widehat{G})$, we have
$U(S)=V(K_n)$.
Let us denote by $\{v_i\}_{i=1}^n$ the vertices of $K_n$, and let $N_m^i$ be the copy of $N_m$ attached to $v_i$ in $K_n \odot N_m$.
For any vertex $v_i \in U(S)$ and any vertex $h_j \in V \bigl( N_m^j \bigr) \subseteq L(S)$,
there exists no vertex $w \in V(G)$ such that
$$
d_{G}(w,v_i)= d_{G}(w,h_j)+1.
$$
Therefore, $L(S) = V(G) = V(K_n \odot N_m)$ is the unique subset $L(S) \in \mathcal{B}(\widehat{G})$ such that $(U(S),L(S))$ is a forward-equalized pair of $G$. Hence, the previous reasoning together with Theorem~\ref{disteqGenralcond} leads to 

$$
\begin{aligned}
\xi(G \odot H)=|S|
&= |U(S)|\,\n(H) + |L(S)| \\
&= \beta(\widehat{G})\,\n(H) + \n(G) \\
&= \xi(G)\,\n(H) + \n(G),
\end{aligned}
$$
for all $\n(H) > n+1$.

The previous example illustrates that the parameter $\beta^*(G)$ is not always attained as the intersection $|U \cap L|$ of a forward-equalized pair $(U,L)$, where $U$ is a $\beta(\widehat{G})$-set. 

\begin{proposition} \label{excentr2bipart}
Let $G$ and $H$ be a two graphs. If there exists a vertex $u \in V(G)$ with $\varepsilon(u) \le 2$, then
\begin{enumerate}[{\rm(i)}]
 \item  the sets $ L = N_G(u) $ and $ U = V(G) \setminus N_G(u) $ form a bipartition of $\widehat{G}$,
 \item the set
$ S =  \left( \bigcup\limits_{v_i \in U} V(H_i)\right)\cup L$ 
is a distance-equalizer set of $G \odot H$,
 \item $\beta^*(G)=0$,
 \item $\xi(G \odot K_1) = \n(G)$.
\end{enumerate}

\end{proposition}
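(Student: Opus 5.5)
The plan is to verify (i) by hand from the distance structure around $u$, and then obtain (ii)--(iv) almost directly from Theorem~\ref{disteqGenralcond} and Lemma~\ref{lemmaLowerBound}. Throughout, $G$ is connected (as everywhere in this section). Since $\varepsilon(u)\le 2$, every vertex of $G$ lies at distance $0$, $1$ or $2$ from $u$. Hence $L=N_G(u)$ consists exactly of the vertices at distance $1$, and $U=V(G)\setminus N_G(u)$ consists of $u$ together with the vertices at distance $2$. Clearly $L\cup U=V(G)$ and $L\cap U=\varnothing$.

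For (i) it suffices to show that $L$ and $U$ are independent sets of $\widehat{G}$, that is, that every pair inside one part has a nonempty bisector in $G$.
\begin{itemize}
\item If $x,y\in L$, then $u\in B_G(x\mid y)$, since both distances equal $1$.
\item If $x,y\in U\setminus\{u\}$, then again $u\in B_G(x\mid y)$, since both distances equal $2$.
\item If $x=u$ and $d_G(u,y)=2$, a common neighbour $w$ of $u$ and $y$ (on a shortest $u$--$y$ path) satisfies $d_G(w,u)=1=d_G(w,y)$.
\end{itemize}
So $(L,U)$ is a bipartition of $\widehat{G}$. Since each part is independent, its complement is a vertex cover, so $L,U\in\mathcal{B}(\widehat{G})$. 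The parts might not be complete to each other, but (i) only asks for a bipartition.

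For (ii) I will apply Theorem~\ref{disteqGenralcond} with $X=U$ and $Y=L$, both vertex covers of $\widehat{G}$ by (i). It remains to check that $(U,L)$ is a forward-equalized pair. Since the two sets are disjoint, we need, for each $x\in U$ and $y\in L$, a vertex $w$ with $d_G(w,x)=d_G(w,y)+1$.
\begin{itemize}
\item If $x=u$, take $w=y$: then $d_G(y,u)=1=0+1$.
\item If $d_G(u,x)=2$, take $w=u$: then $d_G(u,x)=2=d_G(u,y)+1$.
\end{itemize}
This gives (ii). Because $U\cap L=\varnothing$, the definition of $\beta^*(G)$ immediately yields $\beta^*(G)=0$, which is (iii).

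For (iv), take $H=K_1$ in (ii). The resulting set has cardinality $|U|\cdot 1+|L|=\n(G)$, and Lemma~\ref{lemmaLowerBound} gives the matching lower bound. Alternatively, (iv) follows from (iii) via Theorem~\ref{totalEq} with $\n(H)=1$.

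The only mildly delicate point is the case $x=u$ in both (i) and (ii), where $u$ itself cannot serve as the witness. It is handled by a common neighbour in (i) and by $w=y$ in (ii). The degenerate case $\n(G)=1$ is trivial, as noted at the start of Section~\ref{Sec:MainResults}.
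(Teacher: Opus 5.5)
Your proof is correct and follows essentially the same route as the paper. You show that $L$ and $U$ are independent in $\widehat{G}$, using $u$ or a common neighbour as the equidistant witness. You then verify that $(U,L)$ is forward-equalized and invoke Theorem~\ref{disteqGenralcond}, and read off (iii) and (iv) from $U\cap L=\varnothing$ and Theorem~\ref{totalEq}. The differences are cosmetic: you split the forward-equalized check by whether $x=u$ rather than by adjacency of the two vertices, and you also give a direct count for (iv) via Lemma~\ref{lemmaLowerBound}.
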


\begin{proof}
We will analyse each case separately.

\begin{enumerate}[{\rm(i)}]

\item
Note that $L$ and $U$ are disjoint sets and $L \cup U = V(\widehat{G})$.
For any two vertices $v, w \in L$, we have $u \in B_G(v \mid w)$, which implies that $L$ is an independent set in $\widehat{G}$.
Now, in order to prove that $U$ is also an independent set of $\widehat{G}$, we distinguish two possible cases for any two distinct vertices $v, w \in U$:

\begin{itemize}
\item $v, w \in U \setminus \{u\}$.  
In this case $d_G(v,u)=d_G(u,w)=2$, thus $u \in B_G(v \mid w)$.

\item $v = u$ and $w \in U \setminus \{u\}$.  
In this case $d_G(u,w)=2$, hence there exists a vertex $x \in N_G(u)=L$ such that
$d_G(v,x)=1=d_G(x,w)$, leading to $B_G(v \mid w) \neq \varnothing$.
\end{itemize}

Consequently, $L$ and $U$ are partite sets of the bipartite graph $\widehat{G}$.

\item
Since $U$ and $L$ are independent sets of $\widehat{G}$, we have $V(\widehat{G})\setminus U = L$ and $V(\widehat{G})\setminus L = U$ are vertex covers of $\widehat{G}$. Let $x \in L$ and $y \in U$. If $x \in N_G(y)$, then $d_G(x,y)=d_G(x,x)+1$. Otherwise, if $x \notin N_G(y)$, then $d_G(u,y)=d_G(u,x)+1=2$. Hence, $(U, L)$ is a forward-equalized pair of $G$. Now, applying  Theorem~\ref{disteqGenralcond}, it follows that $S$ is a distance-equalizer set of $G \odot H$.

\item 
The sets $U$ and $L$ are vertex covers of $\widehat{G}$, and $(U,L)$ forms a forward-equalized pair of $G$. Since $U \cap L = \varnothing$, it follows that $\beta^*(G)=0$.
 
\item 
As $\beta^*(G)=0$, by Theorem~\ref{totalEq}, we have that $\xi(G \odot K_1) = \n(G)$.

\end{enumerate}

\end{proof}

\begin{corollary} \label{excent2grmax}
Let $G$ and $H$ be a two graphs. If there exists a vertex $u\in V(G)$ with $ \varepsilon(u) \le 2 $, then the graph $ \widehat{G} $ is bipartite and 
 $$
 \xi(G \odot H) \le \bigl(\n(G) - \delta(u)\bigr)\n(H) + \delta(u).
$$
\end{corollary}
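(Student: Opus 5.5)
This corollary follows from Proposition~\ref{excentr2bipart}, so the plan is to read off both claims from its parts (i) and (ii).

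\emph{Bipartiteness.} Part (i) of Proposition~\ref{excentr2bipart} shows that $L=N_G(u)$ and $U=V(G)\setminus N_G(u)$ are disjoint independent sets of $\widehat{G}$. Their union is $V(\widehat{G})$. Hence $(U,L)$ is a bipartition of $\widehat{G}$, which is exactly the statement that $\widehat{G}$ is bipartite. No extra argument is needed, except to note the degenerate case where one of the two sets is empty. Then $\widehat{G}$ has no edges, and it is still trivially bipartite.

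\emph{Upper bound.} Part (ii) of Proposition~\ref{excentr2bipart} states that
$$S=\left(\bigcup_{v_i\in U}V(H_i)\right)\cup L$$
is a distance-equalizer set of $G\odot H$. I therefore bound $\xi(G\odot H)\le |S|$ and compute $|S|$. The union over $U$ is disjoint and disjoint from $V(G)$, and each copy satisfies $|V(H_i)|=\n(H)$. This gives
$$|S|=|U|\,\n(H)+|L|.$$
Since $|L|=|N_G(u)|=\delta(u)$ and $|U|=\n(G)-\delta(u)$, we get $|S|=(\n(G)-\delta(u))\n(H)+\delta(u)$, which is the claimed bound.

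The only point needing care is that the proposition is applied with the same arbitrary $H$ as in the corollary. Since the proposition holds for any graph $H$, this is immediate. Beyond this bookkeeping I expect no real obstacle; the substantive work (the forward-equalized property of $(U,L)$) was already done in Proposition~\ref{excentr2bipart}(ii).
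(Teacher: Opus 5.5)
Your proof is correct and matches the paper's approach. The paper states this result as an immediate corollary of Proposition~\ref{excentr2bipart}: bipartiteness comes from part (i), and the bound comes from part (ii) by counting $|S| = |U|\,\n(H) + |L| = \bigl(\n(G)-\delta(u)\bigr)\n(H)+\delta(u)$, exactly as you do.
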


Relying on Corollary \ref{excent2grmax} we can provide a family of base graphs $G$ for which the lower bound stated in Corollary \ref{lowerbipartite} is attained.

\begin{proposition} \label{excent2_xi_eq}
Let $G$ and $H$ be a two graphs. If there exists a vertex $u\in V(G)$ with $ \varepsilon(u) \le 2$ and $\delta(u) = \alpha(\widehat{G})$, then
$$ \xi(G \odot H) = \beta(\widehat{G})\,\n(H) + \alpha(\widehat{G}) $$
\end{proposition}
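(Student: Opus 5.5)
The plan is to squeeze $\xi(G \odot H)$ between two matching bounds. The lower bound $\xi(G \odot H) \ge \beta(\widehat{G})\,\n(H) + \alpha(\widehat{G})$ is exactly Corollary~\ref{lowerbipartite}, so the work lies in producing a distance-equalizer set of that size. For this I will use the construction of Proposition~\ref{excentr2bipart} and Corollary~\ref{excent2grmax}, and then check that the hypothesis $\delta(u)=\alpha(\widehat{G})$ makes the sizes coincide.

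First, take $u$ with $\varepsilon(u)\le 2$ and put $L=N_G(u)$ and $U=V(G)\setminus N_G(u)$. By Proposition~\ref{excentr2bipart}(i), $L$ and $U$ form a bipartition of $\widehat{G}$. By part (ii), the set $S=\left(\bigcup_{v_i\in U}V(H_i)\right)\cup L$ is a distance-equalizer set of $G\odot H$. Since $U\cap L=\varnothing$, we get
\[
|S| = |U|\,\n(H) + |L| = \bigl(\n(G)-\delta(u)\bigr)\n(H) + \delta(u),
\]
which is the bound of Corollary~\ref{excent2grmax}.

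Second, substitute $\delta(u)=\alpha(\widehat{G})$. Gallai's Theorem~\ref{Gallai} gives $\n(G)-\delta(u) = \n(\widehat{G})-\alpha(\widehat{G}) = \beta(\widehat{G})$. Hence $|S| = \beta(\widehat{G})\,\n(H)+\alpha(\widehat{G})$, and this equals the lower bound, so equality holds.

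There is no real obstacle here. The only point needing care is that the identity $|U| = \n(G)-\delta(u)$ uses that $u\notin N_G(u)$, so $u$ lies in $U$. Gallai then turns the hypothesis on $\delta(u)$ into the statement that $U$ is a minimum vertex cover, i.e.\ $|U| = \beta(\widehat{G})$. The case $\n(G)=1$ is trivial and is excluded by the standing convention anyway.
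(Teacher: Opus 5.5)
Your proposal is correct and follows the paper's proof: you take the upper bound $\bigl(\n(G)-\delta(u)\bigr)\n(H)+\delta(u)$ from the construction of Proposition~\ref{excentr2bipart} and Corollary~\ref{excent2grmax}, then use Gallai's Theorem~\ref{Gallai} with $\delta(u)=\alpha(\widehat{G})$ to rewrite it as $\beta(\widehat{G})\,\n(H)+\alpha(\widehat{G})$. The matching lower bound comes from Corollary~\ref{lowerbipartite}, as in the paper; connectedness of $G$ needs no separate assumption, since it follows from $\varepsilon(u)\le 2$.
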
 

\begin{proof}
Applying Corollary \ref{excent2grmax} and the Gallai's Theorem \ref{Gallai}, we have 

$$ \xi(G \odot H) \le \bigl(\n(G) - \delta(u)\bigr)\n(H) + \delta(u) = \beta(\widehat{G})\,\n(H) + \alpha(\widehat{G}).$$
Now, applying Corollary \ref{lowerbipartite} the equality follows.

\end{proof}

\begin{proposition}
For every graph $G$, such that $\Delta(G)= \n(G)-1$, and every graph $H$,

\[ \xi(G \odot H) = \left\lbrace
\begin{array}{cl}
  \n(G), & \text{if } \delta(G) \geq 2 \\
  \n(H) + \n(G)-1, & \text{if } \delta(G)=1
\end{array} \right. \]

\end{proposition}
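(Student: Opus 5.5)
Let $u$ be a universal vertex of $G$, so $\varepsilon(u)\le 1\le 2$ and $N_G(u)=V(G)\setminus\{u\}$. The plan is to analyse $\widehat{G}$ directly in the two cases and then invoke Theorem~\ref{totalEq} in the first case and Proposition~\ref{excent2_xi_eq} in the second. By Proposition~\ref{excentr2bipart}(i), $\widehat{G}$ is bipartite with partite sets $L=V(G)\setminus\{u\}$ and $U=\{u\}$. Hence every edge of $\widehat{G}$ is incident with $u$. Moreover, any two vertices $v,w\neq u$ satisfy $u\in B_G(v\mid w)$, consistent with this.

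\emph{Case $\delta(G)\ge 2$.} We show $\widehat{G}$ has no edges. It suffices to check that for each $v\ne u$ the bisector $B_G(u\mid v)$ is nonempty. Since $\delta(v)\ge 2$, the vertex $v$ has a neighbour $w\neq u$. As $u$ is universal, $d_G(w,u)=1=d_G(w,v)$, so $w\in B_G(u\mid v)$. Thus $\beta(\widehat{G})=0$, and Theorem~\ref{totalEq} gives $\xi(G\odot H)=\n(G)$. (If $\n(G)\le 2$ then $\delta(G)\ge2$ is impossible, so there is no degenerate issue.)

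\emph{Case $\delta(G)=1$.} Take an end vertex $v$; its support vertex must be $u$, since $u$ is universal. We claim $B_G(u\mid v)=\varnothing$. Indeed, $d_G(u,v)=1$. Any candidate $w\notin\{u,v\}$ satisfies $d_G(w,u)=1$ and $d_G(w,v)=2$, because $v$'s only neighbour is $u$. Also $u$ and $v$ are not equidistant to themselves. So $\widehat{G}$ has at least one edge, all of whose edges lie at $u$. Hence $\beta(\widehat{G})=1$ with $\{u\}$ a minimum cover, and $\alpha(\widehat{G})=\n(G)-1$ by Gallai's Theorem~\ref{Gallai}. Since $\delta(u)=\n(G)-1=\alpha(\widehat{G})$ and $\varepsilon(u)\le 2$, Proposition~\ref{excent2_xi_eq} gives
\[
\xi(G\odot H)=\beta(\widehat{G})\n(H)+\alpha(\widehat{G})=\n(H)+\n(G)-1 .
\]
This includes $G\cong K_2$, which agrees with Proposition~\ref{familieseqidim}(i).

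The main point requiring care is the second case: we must verify that $B_G(u\mid v)$ is genuinely empty, and that the hypotheses of Proposition~\ref{excent2_xi_eq} are met exactly (namely $\delta(u)=\alpha(\widehat{G})$). Both follow from the universality of $u$ and the computation above. Everything else is direct application of the earlier results.
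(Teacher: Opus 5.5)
Your proposal is correct and follows essentially the same route as the paper. You fix a universal vertex $u$. When $\delta(G)\ge 2$, a common neighbour of $u$ and $v$ lies in $B_G(u\mid v)$, so $\beta(\widehat{G})=0$ and Theorem~\ref{totalEq} applies. When $\delta(G)=1$, a leaf gives an edge of $\widehat{G}$ at $u$, so $\alpha(\widehat{G})=\n(G)-1=\delta(u)$ and Proposition~\ref{excent2_xi_eq} applies. The only minor differences are that you get the star structure of $\widehat{G}$ from Proposition~\ref{excentr2bipart}(i) rather than noting directly that $N_G(u)$ is independent in $\widehat{G}$, and that you check $B_G(u\mid v)=\varnothing$ and the $K_2$ case more explicitly.
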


\begin{proof} Let be $u$ an universal vertex of $G$. Note that $N_G(u)$ is an independent set in $\widehat{G}$.

If $\delta(G) \geq 2$, then there exists a vertex $w \in N_G(u) \cap N_G(v)\subseteq B_G(v \mid u)$ for every $v \in V(G) \setminus \{u\}$. Consequently, $\alpha(\widehat{G})= \n(G)$, which is equivalent to $\beta(\widehat{G})=0$. Now, applying Theorem~\ref{totalEq}, we get $\xi(G \odot H)= \n(G)$.

If $\delta(G)=1$, then for every vertex $v\in V(G)$ of degree one we have $B_G(v \mid u)=\varnothing$. Consequently, $\{u,v\} \in E(\widehat{G})$.
It follows that $N_G(u)$ is an $\alpha(\widehat{G})$-set. Moreover, in this case we have
$\alpha(\widehat{G}) = \n(G)-1 = \delta(u)$.
Hence, by Proposition~\ref{excent2_xi_eq}, it follows that
\[
\xi(G \odot H)
= \bigl(\n(G)-\delta(u)\bigr)\n(H) + \delta(u)
= \n(H) + \n(G) - 1.
\]
This completes the proof.
\end{proof}

\section{Concluding remarks and future work} \label{Sec:Conclusions}

In this paper, we have undertaken a systematic study of the equidistant dimension of the corona product $G \odot H$ of two graphs $G$ and $H$. To this end, we introduced the \emph{empty bisector graph} $\widehat{G}$, a novel auxiliary construction that plays a central role in our analysis. This graph encodes pairs of vertices in $G$ that cannot be equidistantly distinguished by any third vertex, thereby providing an effective framework for capturing the equidistance constraints inherent to the base graph. Using this approach, we derived tight lower and upper bounds for the equidistant dimension of general corona product graphs and determined the exact value of $\xi(G \odot H)$ for several classical families of graphs.

The empty bisector graph emerges as a central tool in the study of the equidistant dimension of corona products. Although we established several relationships between the structural parameters of $G$ and $\widehat{G}$, a more systematic investigation of this connection could further extend the scope of our analysis. 
Indeed, since $\xi(G \odot H)$ becomes a linear function of $\n(H)$ beyond the threshold $\min\{\alpha(\widehat{G}), \beta(\widehat{G})-\beta^*(G)+1\}$, with slope $\beta(\widehat{G})$, the study of this parameter may provide further insight into the structural behaviour of the corona product. 
In particular, it would be of interest to derive explicit expressions or sharp bounds for the vertex cover number $\beta(\widehat{G})$ in terms of well-known invariants of $G$, especially for graph families not considered in this work. Such results would further clarify the role of $\widehat{G}$ and potentially extend the applicability of our methods.

An open problem that remains to be investigated is whether the threshold $\min\{\alpha(\widehat{G}), \beta(\widehat{G})-\beta^*(G)+1\}$, beyond which $\xi(G \odot H)$ becomes a linear function of $\n(H)$, can be reduced. In fact, the examples of graphs $G$ we have presented (Figures~\ref{fig:FigFish} and~\ref{fig:NonLinearity}), for which this linearity fails, correspond to the cases $\n(H)=1$ and $\n(H)\le 2$, respectively. In neither case do these values reach the threshold established for linearity.

The fact that, for any fixed connected graph $G$, the equidistant dimension
of $G \odot H$ eventually becomes linear in $\n(H)$, with slope
$\beta(\widehat{G})$, leads us to ask when two non-isomorphic graphs $G_1$
and $G_2$ yield isomorphic empty bisector graphs. Identifying such conditions would explain when different base graphs induce identical asymptotic behaviour of the equidistant dimension under the corona product with arbitrary graphs $H$.

Finally, we leave open the problem of characterizing the graphs for which the
lower and upper bounds established in Theorems~\ref{improvedlbound}
and~\ref{UpperBound1}, respectively, are attained. Although we have identified
a sufficient condition under which the lower bound in
Theorem~\ref{improvedlbound} is achieved, a complete characterization of such
graphs is still missing. A full description of these extremal cases would
contribute to a deeper understanding of the tightness of our results and of the
structural features governing the equidistant dimension of corona products.


\begin{acknowledgement}

Sandor E. Tu{\~n}\'{o}n-Andr\'{e}s was supported by the Mart{\'i}-Franqu{\`e}s Predoctoral Research Grant Programme (2024PMF-PIPF-17) from Universitat Rovira i Virgili. This work was also supported by the project PROVTOPIA (PID2023-150098OB-I00) funded by MICIU/AEI/10.13039/501100011033 and FEDER (EU).

\end{acknowledgement}


\begin{thebibliography}{10}
\expandafter\ifx\csname url\endcsname\relax
  \def\url#1{\texttt{#1}}\fi
\expandafter\ifx\csname urlprefix\endcsname\relax\def\urlprefix{URL }\fi

\bibitem{BarikPatiSarma2007}
S.~Barik, S.~Pati, B.~K. Sarma, The spectrum of the corona of two graphs, SIAM
  Journal on Discrete Mathematics 21~(1) (2007) 47--56.
\newline\urlprefix\url{https://doi.org/10.1137/050624029}

\bibitem{Beerliova2006}
Z.~Beerliova, F.~Eberhard, T.~Erlebach, A.~Hall, M.~Hoffmann, M.~Mihal{\'a}k,
  L.~Ram, S.~S. Rao, Network discovery and verification, IEEE Journal on
  Selected Areas in Communications 24~(12) (2006) 2168--2181.
\newline\urlprefix\url{https://ieeexplore.ieee.org/document/4016133}

\bibitem{Gallai1959}
T.~Gallai, {\"{U}ber extreme Punkt- und Kantenmengen}, Annales Universitatis
  Scientarium Budapestinensis de Rolando E\"{o}tv\"{o}s Nominatae, Sectio
  Mathematica 2 (1959) 133--138.

\bibitem{GispertFernandez2024}
A.~Gispert-Fern{\'a}ndez, J.~A. Rodr{\'i}guez-Vel{\'a}zquez, The equidistant
  dimension of graphs: Np-completeness and the case of lexicographic product
  graphs, AIMS Mathematics 9~(6) (2024) 15325--15345.
\newline\urlprefix\url{https://www.aimspress.com/article/doi/10.3934/math.2024744}

\bibitem{GispertFernandez2025}
A.~Gispert-Fern{\'a}ndez, J.~A. Rodr{\'i}guez-Vel{\'a}zquez, I.~G. Yero,
  Equidistant dimension of cartesian product graphs (2025).
\newline\urlprefix\url{https://arxiv.org/abs/2512.07672}

\bibitem{Gonzalez2022}
A.~Gonz{\'a}lez, M.~C. Hernando, M.~Mora, The equidistant dimension of graphs,
  Bulletin of the Malaysian Mathematical Sciences Society 45 (2022)
1757--1775.
\newline\urlprefix\url{https://link.springer.com/article/10.1007/s40840-022-01295-z}

\bibitem{Harary1969}
F.~Harary, Graph theory, Addison-Wesley Publishing Co., Reading, Mass.-Menlo
  Park, Calif.-London, 1969.
\newline\urlprefix\url{http://books.google.es/books/about/Graph_Theory.html?id=9nOljWrLzAAC&redir_esc=y}

\bibitem{Harary1976}
F.~Harary, R.~A. Melter, On the metric dimension of a graph, Ars Combinatoria 2
  (1976) 191--195.
\newline\urlprefix\url{http://www.ams.org/mathscinet-getitem?mr=0457289}

\bibitem{Haynes1998a}
T.~W. Haynes, S.~T. Hedetniemi, P.~J. Slater, Domination in Graphs: Volume 2:
  Advanced Topics, Chapman \& Hall/CRC Pure and Applied Mathematics, Taylor \&
  Francis, 1998.

\bibitem{Imrich2000}
W.~Imrich, S.~Klav\v{z}ar, Product graphs, structure and recognition,
  Wiley-Interscience series in discrete mathematics and optimization, Wiley,
  2000.
\newline\urlprefix\url{http://books.google.es/books?id=EOnuAAAAMAAJ}

\bibitem{Khuller1996}
S.~Khuller, B.~Raghavachari, A.~Rosenfeld, Landmarks in graphs, Discrete
  Applied Mathematics 70~(3) (1996) 217--229.
\newline\urlprefix\url{http://www.sciencedirect.com/science/article/pii/0166218X95001062}

\bibitem{Kratica2024}
J.~Kratica, M.~{\v{C}}angalovi{\'c}, V.~Kova{\v{c}}evi{\'c}-Vuj{\v{c}}i{\'c},
  Equidistant dimension of johnson and kneser graphs, arXiv preprint (2025).
\newline\urlprefix\url{https://arxiv.org/abs/2406.17870}

\bibitem{Kratica2025}
J.~Kratica, A.~Savi{\'c}, Z.~Maksimovi{\'c}, M.~Bogdanovi{\'c}, On the
  equidistant dimension of hamming graphs, Matemati{\v{c}}ki Vesnik (2025)
  1--7.
\newline\urlprefix\url{https://doi.org/10.57016/MV-ZooS4049}

\bibitem{Savic2024}
A.~Savi\'c, Z.~Maksimovi\'c, M.~Bogdanovi\'c, J.~Kratica, The equidistant
  dimension of some graphs of convex polytopes, arXiv preprint (2024).
\newline\urlprefix\url{https://arxiv.org/abs/2407.15307}

\bibitem{Slater1975}
P.~J. Slater, Leaves of trees, Congressus Numerantium 14 (1975) 549--559,
  introduces the concept of locating sets (resolving sets) in trees, considered
  an early contribution to metric dimension in graphs.

\bibitem{Trujillo-Rasua2016}
R.~Trujillo-Rasua, I.~G. Yero, $k$-metric antidimension: A privacy measure for
  social graphs, Information Sciences 328 (2016) 403--417.
\newline\urlprefix\url{http://www.sciencedirect.com/science/article/pii/S0020025515006477}

\bibitem{Warshall1962}
S.~Warshall, A theorem on boolean matrices, Journal of the ACM 9~(1) (1962)
  11--12.
\newline\urlprefix\url{http://doi.acm.org/10.1145/321105.321107}

\bibitem{West2001}
D.~B. West, et~al., Introduction to graph theory, vol.~2, Prentice hall Upper
  Saddle River, 2001.

\bibitem{Yero2011}
I.~G. Yero, D.~Kuziak, J.~A. Rodr\'{\i}guez-Vel\'{a}zquez, On the metric
  dimension of corona product graphs, Computers \& Mathematics with
  Applications 61~(9) (2011) 2793--2798.
\newline\urlprefix\url{http://www.sciencedirect.com/science/article/pii/S0898122111002094}

\end{thebibliography}
\end{document}